\def\defthm#1#2#3#4{
  \newtheorem{#1}[theorem]{#3}
  \newtheorem*{#1*}{#3}
  \newtheorem{#2}[theorem]{#4}
  \newtheorem*{#2*}{#4}
  \crefname{#1}{#3}{#4}
  \crefname{#2}{#4}{#4}  
}
\newtheoremstyle{mythm}%
{10pt}
{}
{\itshape}
{}
{\bf}
{.}
{.5em}
{}%
\newtheoremstyle{mydef}%
{10pt}
{3pt}
{}
{}
{\bf}
{.}
{.5em}
{}%
\newtheoremstyle{myrmk}%
{10pt}
{3pt}
{}
{}
{\bf}
{.}
{.5em}
{}%
\theoremstyle{mythm}
\newtheorem{theorem}{Theorem}[section]
\newtheorem*{theorem*}{Theorem}
\theoremstyle{mydef}
\theoremstyle{myrmk}
\newtheorem*{replemmax}{\reptitle}
 {\end{replemmax}}
\newtheorem*{repthmx}{\reptitle}
 {\end{repthmx}}
\newtheorem*{repcorx}{\reptitle}
 {\end{repcorx}}
\crefname{section}{Section}{Sections}
\crefname{theorem}{Theorem}{Theorems}
\numberwithin{equation}{section}
\renewenvironment{proof}[1][\proofname] {\par\pushQED{\qed}\normalfont\topsep6\p@\@plus6\p@\relax\trivlist\item[\hskip\labelsep\bf#1\@addpunct{.}]\ignorespaces}{\popQED\endtrivlist\@endpefalse}
\newcommand{\blank}{\mbox{\hspace{3pt}\underline{\ \ }\hspace{2pt}}}
\newcommand{\sprime}{^{\prime}}
\newcommand{\pbs}{\scalebox{1.5}{\rlap{$\cdot$}$\lrcorner$}}
\newcommand{\pos}{\rotatebox[origin=c]{180}{\pbs}}
\newcommand{\bBox}{\mathbin{\Box}}
\author{Raffael Stenzel\thanks{The author acknowledges the support of the Grant Agency of the Czech Republic under the
grant 19-00902S.}}
\renewcommand\footnotemark{}
\title{Bousfield-Segal spaces}
\begin{document}
\maketitle

\abstract{
This paper is a study of Bousfield-Segal spaces, a notion introduced by Julie Bergner drawing on ideas about 
Eilenberg-Mac Lane objects due to Bousfield. In analogy to Rezk's Segal spaces, 
they are defined in such a way that Bousfield-Segal spaces naturally come equipped with a 
homotopy-coherent fraction operation in place of a composition. 

In this paper we show that Bergner's model structure for Bousfield-Segal spaces in fact can be obtained from 
the model structure for Segal spaces both as a localization and a colocalization. We thereby prove that Bousfield-Segal 
spaces really are Segal spaces, and that they characterize exactly those with invertible arrows.
We note that the complete Bousfield-Segal spaces are precisely the homotopically constant Segal spaces, and deduce that 
the associated model structure yields a model for both $\infty$-groupoids and Homotopy Type Theory.
}

\section{Introduction}
In \cite[6]{bergner2}, Julie Bergner introduced two model structures on the category $s\mathbf{S}$ of bisimplicial sets.
Characterized by their fibrant objects, these are the model structure $(s\mathbf{S},\mathrm{B})$ for Bousfield-Segal 
spaces and the model structure $(s\mathbf{S},\mathrm{CB})$ for complete Bousfield-Segal spaces. A reduced and pointed  
version of Bousfield-Segal spaces itself originated in Bousfield's work \cite{bousfieldnotes} under the name ``very 
special bisimplicial sets of type 1'' (representing what he calls Eilenberg-Mac Lane objects of type 1 in the homotopy 
category of spaces). A pointed version of complete Bousfield-Segal spaces appeared in the same 
unpublished note under the name ``very special bisimplicial sets of type 0''.

Both notions were defined in the last 
section of Bergner's paper, proposing model structures whose fibrant objects are to be thought of as 
$\infty$-groupoidal ``Segal-like'' spaces.
The primary topic of this paper is to study these two model structures and relate them to Rezk's model structures for 
Segal spaces and complete Segal spaces as introduced in \cite{rezk}.

In essence, the relation between Rezk's notions and Bergner's notions is the relation between formal multiplication 
and formal division. In 1-dimensional algebra, taking fractions in a group yields an operation
$(g,h)\mapsto g/h$ on the underlying set of the group which is defined by the multiplication $gh^{-1}$. This operation 
satisfies various properties which may be abstractly axiomatized so as to define a formal fraction operation on any set 
without the assumption of a multiplication in the first place (see \cite[1.4]{bousfieldnotes} or 
\cite[1.3]{hallgrpthy}).

To this effect, Bousfield-Segal spaces as defined in the work of Bergner are Reedy fibrant bisimplicial sets $X$ which 
come equipped with a homotopy-coherent and many-sorted fraction operation. If we denote the space of edges between two 
vertices $x,y\in X_{00}$ in $X$ by $X_1(x,y)$, that is a family of arrows
\[\blank /\blank\colon X_1(x,z)\times X_1(x,y)\rightarrow X_1(y,z)\]
induced by associated \emph{Bousfield maps}, in a very similar way as Segal spaces are 
Reedy fibrant bisimplicial sets $Y$ equipped with a composition operation
\[\circ\colon Y_1(y,z)\times Y_1(x,y)\rightarrow Y_1(x,z)\]
induced by associated Segal maps.
It is a classical result of ordinary group theory that fraction operations (subject to suitable axioms) and group 
structures yield equivalent data on any given set. Accordingly, we will exhibit Bousfield-Segal spaces as exactly those 
Segal spaces in which every edge is an equivalence.

To explain how the latter arise as the fibrant objects in a left Bousfield localization of Rezk's model 
structure for Segal spaces in a very natural way, let us recall that the category $\mathbf{Gpd}$ of (small) groupoids 
arises as a localization of the category $\mathbf{Cat}$ of (small) categories. If by $I[1]$ we denote the free groupoid 
generated by the walking arrow $[1]$, then $\mathbf{Gpd}$ is the localization of
$\mathbf{Cat}$ at the inclusion $[1]\rightarrow I[1]$. The model structure for 
Kan complexes can be obtained similarly as the left Bousfield localisation of the model structure for
quasi-categories at the inclusion $\Delta^1\rightarrow N(I[1])$, and, indeed, Kan complexes are exactly the
quasi-categories with invertible edges. Analogously, we will see that the model structure for Bousfield-Segal spaces is 
the left Bousfield localization of the model structure for Segal spaces at a canonical map induced by the inclusion
$[1]\rightarrow I[1]$.

Joyal and Tierney have shown in \cite[Theorem 4.11]{jtqcatvsss} that the model structure $(s\mathbf{S},\mathrm{CS})$ for 
complete Segal spaces is a model for $(\infty,1)$-category theory equivalent to the model category for quasi-categories. 
It hence follows that the model structure $(s\mathbf{S},\text{CB})$ for complete Bousfield-Segal spaces is a model for
$\infty$-groupoids equivalent to the one associated to Kan complexes, as stated in \cite[Theorem 6.12]{bergner2}.
We will furthermore see that $(s\mathbf{S},\text{CB})$ is right proper and supports a model of Homotopy 
Type Theory with univalent universes in the sense of \cite{shulmaninv}, using that Bousfield-Segal spaces are complete 
if and only if they are homotopically constant. 

Therefore, Section~\ref{secreedy} recalls the Reedy model structure $(s\mathbf{S},R_v)$ on bisimplicial sets 
and some of its associated Joyal-Tierney calculus from \cite[Section 2]{jtqcatvsss}.
Section~\ref{secbs} introduces Bousfield-Segal spaces in the sense of \cite{bergner2}. Here, we explain how every 
Bousfield-Segal space $X$ comes equipped with a contractible choice of fraction operations which induce an associated 
homotopy groupoid $\text{Ho}_B(X)$.

In Section~\ref{secb=bs} we will show that every Bousfield-Segal space is not just ``Segal-like'' but in fact a Segal 
space and that the model structure $(s\mathbf{S},\text{B})$ is a left Bousfield localization of
$(s\mathbf{S},\text{S})$. We will also see that the homotopy category $\text{Ho}(X)$ of a Bousfield-Segal space $X$ 
associated to it \emph{as a Segal space} (following \cite[5.5]{rezk}) is a groupoid and coincides with the construction 
$\text{Ho}_B(X)$. Hence, many of Rezk's results in \cite{rezk} and Joyal and Tierney's results in \cite{jtqcatvsss} 
carry over to the model structure for Bousfield-Segal spaces.

In Section~\ref{secinvedges} we use this to describe Bousfield-Segal spaces as the Segal spaces with invertible edges in 
a precise way. We furthermore define the core of a Segal space $X$ as the largest Bousfield-Segal space contained in $X$ 
and show that this construction exhibits $(s\mathbf{S},\text{B})$ as a homotopy colocalization of
$(s\mathbf{S},\text{S})$ as well.

In Section 6 we study complete Bousfield-Segal spaces and show that they are exactly the Reedy fibrant homotopically 
constant bisimplicial sets. The model structure for homotopically constant bisimplicial sets is contained in different 
classes of well understood model structures treated in the literature of \cite{rss}, \cite{duggersmallpres} and 
\cite{cisinski} respectively. It follows that the vertical projection, the horizontal projection and the diagonal 
functor all are part of Quillen equivalences between the model category $(s\mathbf{S},\mathrm{CB})$ for complete 
Bousfield-Segal spaces and the model category $(\mathbf{S},\mathrm{Kan})$ for Kan complexes. In Section~\ref{seccbshott} 
we conclude that $(s\mathbf{S},\mathrm{CB})$ is a type theoretic model category with as many univalent fibrant universes 
as there are inaccessible cardinals. We give a direct proof that $(s\mathbf{S},\mathrm{CB})$ is right proper using a 
symmetry argument.

\begin{acknowledgments*}
The author would like to thank his advisor Nicola Gambino for proposing the study of Bergner's 
work on invertible Segal spaces with a view towards Homotopy Type Theory, as well as for the many subsequent discussions 
and his continuous feedback. The author is grateful to John Bourke, Christian Sattler and an anonymous referee for 
helpful comments, and to the organizers of the YaMCatS series, the MURI HoTT Meeting 2017, and the PSSL 2018 in Brno, 
respectively, for giving him the opportunity to present this material. The author would also like to thank Julie Bergner 
for sharing her personal notes on Bousfield's original work on very special bisimplicial sets. The majority of the 
contents of this paper were carried out as part of the author's PhD thesis at the University of Leeds, supported by a 
Faculty Award of the 110 Anniversary Research Scholarship. The present version is written with support of the Grant 
Agency of the Czech Republic under the grant 19-00902S.
\end{acknowledgments*}

\section{Preliminaries on bisimplicial sets}\label{secreedy}

A bisimplicial set $X\in s\mathbf{S}$ can be understood as a functor
$X\colon\Delta^{op}\times\Delta^{op}\rightarrow\text{Set}$, and whenever done so, will be denoted by 
$X_{\bullet\bullet}$ to highlight its two 
components. Taking its exponential transpose associated to the product with $\Delta^{op}$ on the left or on the right 
yields simplicial objects in the category $\mathbf{S}$ of simplicial sets, whose evaluation at an object
$[n]\in\Delta^{op}$ is the $n$-th column $X_{n}:=X_{n\bullet}$ and the $n$-th row $X_{\bullet n}$ respectively.

\subsection{The box product and its adjoints}
To recall some constructions which are very convenient in describing the generating sets for
the model structures on bisimplicial sets we are interested in, we briefly summarise some constructions from 
\cite[Section 2]{jtqcatvsss}.

By left Kan extension of the Yoneda embedding $y\colon\Delta\times\Delta\rightarrow s\mathbf{S}$  along the 
product of Yoneda embeddings
$y\times y\colon\Delta\times\Delta\rightarrow\mathbf{S}\times\mathbf{S}$
one obtains a bicontinuous functor
$\blank\bBox\blank\colon \mathbf{S}\times \mathbf{S}\rightarrow s\mathbf{S}$, often called the
\emph{box product}. The box product is divisible on both sides, i.e.\ gives rise to adjoint pairs 
\[\xymatrix{
A\bBox\blank\colon\mathbf{S}\ar@/^.5pc/[r]\ar@{}[r]|{\bot} & s\mathbf{S}\colon A\setminus\blank\ar@/^.5pc/[l]
}\]
and 
\[\xymatrix{
\blank\bBox B\colon\mathbf{S}\ar@/^.5pc/[r]\ar@{}[r]|{\bot} & s\mathbf{S}\colon\blank/B\ar@/^.5pc/[l]
}\]
for all simplicial sets $A$ and $B$. 
In particular, for any bisimplicial set $X$, the simplicial set $\Delta^n\setminus X\cong X_n$ is the $n$-th 
column and $X/\Delta^n\cong X_{\bullet n}$ is the $n$-th row of $X$. Vice versa, for a given
$X\in s\mathbf{S}$, the induced functors
\[\xymatrix{
\blank\setminus X\colon\mathbf{S}^{op}\ar@<.5ex>[r] & \mathbf{S}\colon X/\blank\ar@<.5ex>[l]
}\]
form an adjoint pair with left adjoint $\blank\setminus X$. The box product induces a functor
\[\blank\bBox\sprime\blank\colon\mathbf{S}^{[1]}\times\mathbf{S}^{[1]}\rightarrow (s\mathbf{S})^{[1]}\]
on arrow-categories via the pushout-product construction. That means, it takes a pair of arrows
$u\colon A\rightarrow B$, $v\colon A\sprime\rightarrow B\sprime$ in $\mathbf{S}$ to the natural map $u\bBox\sprime v$ in 
the diagram
\[\xymatrix{
A\bBox A\sprime\ar[d]_{v\bBox A\sprime}\ar[r]^{A\bBox u}\ar@{}[dr]|(.7){\pos} & A\bBox B\sprime\ar@/^1pc/[ddr]^{v\bBox B\sprime}\ar[d] & \\
B\bBox A\sprime\ar[r]\ar@/_1pc/[drr]_{B\bBox u} & Q\ar[dr]|{u\bBox\sprime v} & \\
 & & B\bBox B\sprime.
 }\]
The functor $\blank\bBox\sprime\blank$ is divisible on both sides, too; the right adjoints to the functors
$f\bBox\sprime\blank$ and $\blank\bBox\sprime f$ for a given map $f\in s\mathbf{S}$ are denoted by
\[\langle f\setminus\blank\rangle,\langle\blank/f\rangle
\colon(s\mathbf{S})^{[1]}\rightarrow\mathbf{S}^{[1]}\]
respectively.
In the following, given arrows $u,v$ in $\mathbf{S}$ we write $u\pitchfork v$ to denote that $u$ has the left lifting 
property against $v$.

\begin{proposition}[{\cite[Proposition 2.1]{jtqcatvsss}}]\label{boxdividadj}
For any two maps $u,v\in\mathbf{S}$ and any map $f\in s\mathbf{S}$, we have
\[(u\bBox\sprime v)\pitchfork f\Longleftrightarrow u\pitchfork\langle f/v\rangle\Longleftrightarrow v\pitchfork\langle u\setminus f\rangle.\]
\end{proposition}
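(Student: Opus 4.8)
The plan is to prove the two equivalences by transposing lifting problems along the box-divisibility adjunctions, treating only one of them in detail since the other is its mirror image. Write $u\colon A\to B$ and $v\colon A\sprime\to B\sprime$ for the two maps in $\mathbf S$ and $f\colon X\to Y$ for the map in $s\mathbf S$. I will establish that $(u\bBox\sprime v)\pitchfork f$ holds if and only if $v\pitchfork\langle u\setminus f\rangle$ does; the equivalence with $u\pitchfork\langle f/v\rangle$ then follows by the symmetry of the box product, which interchanges its two variables together with the right adjoints $\setminus$ and $/$, or directly by repeating the argument with the right-divisibility adjunctions $\blank\bBox A\sprime\dashv\blank/A\sprime$ and $\blank\bBox B\sprime\dashv\blank/B\sprime$ in place of $A\bBox\blank\dashv A\setminus\blank$ and $B\bBox\blank\dashv B\setminus\blank$.

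First I would unwind a lifting problem of $v$ against $\langle u\setminus f\rangle$. Since $\langle u\setminus f\rangle$ has codomain the pullback $(A\setminus X)\times_{A\setminus Y}(B\setminus Y)$, such a problem consists of a map $A\sprime\to B\setminus X$ (the top edge) together with a compatible pair $B\sprime\to A\setminus X$ and $B\sprime\to B\setminus Y$ (the bottom edge), subject to one commutativity constraint. Transposing each of these three maps across the relevant adjunction $A\bBox\blank\dashv A\setminus\blank$ or $B\bBox\blank\dashv B\setminus\blank$ converts them into maps $B\bBox A\sprime\to X$, $A\bBox B\sprime\to X$ and $B\bBox B\sprime\to Y$. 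By naturality of the transposition, the commutativity constraint becomes exactly the assertion that the first two of these agree on $A\bBox A\sprime$, so that the universal property of the pushout glues them into a single map $Q\to X$ out of $Q=(B\bBox A\sprime)\cup_{A\bBox A\sprime}(A\bBox B\sprime)$; and the remaining compatibility conditions become precisely the statement that this $Q\to X$ together with $B\bBox B\sprime\to Y$ form a commutative square against $f$ with left edge $u\bBox\sprime v$. These transpositions are mutually inverse and natural, so the two kinds of square correspond bijectively.

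It then remains to match solutions. A filler of the $u\bBox\sprime v$-square is a map $B\bBox B\sprime\to X$ that restricts along $u\bBox\sprime v$ to the given map $Q\to X$ and lies over $B\bBox B\sprime\to Y$; transposing it across $B\bBox\blank\dashv B\setminus\blank$ yields a map $B\sprime\to B\setminus X$, and, again by naturality, its defining identities turn into exactly the identities exhibiting $B\sprime\to B\setminus X$ as a diagonal of the $\langle u\setminus f\rangle$-square. Hence fillers correspond to fillers and the equivalence of lifting properties follows. The argument is entirely formal, powered only by the naturality of the adjunction bijections and the duality between the pushout defining the domain of $u\bBox\sprime v$ and the pullback defining the codomain of $\langle u\setminus f\rangle$. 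The one genuinely delicate point—and thus the main obstacle—is the bookkeeping of this pushout/pullback dictionary: one has to verify that every commutativity and compatibility condition on one side transposes to precisely its counterpart on the other, with nothing unmatched left over. Once this correspondence is laid out, both equivalences drop out at once.
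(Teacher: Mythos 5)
The paper does not prove this statement at all: it is quoted verbatim from Joyal--Tierney \cite[Proposition 2.1]{jtqcatvsss} and stamped with a \(\qed\), so there is nothing internal to compare against. Your argument is correct and is exactly the standard proof of this fact (and the one in the cited source): transposing the three components of a \(\langle u\setminus f\rangle\)-lifting problem across the divisibility adjunctions, matching the pullback defining its codomain against the pushout \(Q\) defining the domain of \(u\bBox\sprime v\), and checking that fillers transpose to fillers, with the second equivalence obtained by the mirror-image argument for \(\blank/v\).
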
\qed

\subsection{The vertical and horizontal Reedy model structures}
It is well known that the Reedy and injective model structures on $s\mathbf{S}$ coincide. This coincidence boils down to 
the fact that the simplex-category $\Delta$ is an elegant Reedy category as treated by Bergner and Rezk in
\cite[3]{brtheta} (in fact it is the archetype of such a Reedy category). We loosely follow the language and 
structure of \cite{jtqcatvsss} and call this model structure the \emph{vertical Reedy model} structure, denoted by
$R_v$. Its cofibrations are the (pointwise) monomorphisms, its weak equivalences the pointwise weak equivalences and its 
fibrations the maps with the right lifting property with respect to those cofibrations which are also weak equivalences. 
The classes of vertical Reedy cofibrations, weak equivalences and fibrations will be denoted by
$\mathcal{C}_v$, $\mathcal{W}_v$ and $\mathcal{F}_v$, respectively. We call the fibrations in the vertical Reedy model 
structure ``$v$-fibrations'' for short.

For $n\geq 0$ we denote by $\delta_n\colon\partial\Delta^n\hookrightarrow\Delta^n$ the boundary inclusion of the
$n$-simplex $\Delta^n\in\mathbf{S}$ and, for $0\leq i\leq n$, by $h_i^n\colon \Lambda_i^n\hookrightarrow\Delta^n$ the 
corresponding $i$-th horn inclusion. Recall that the set $\{\delta_n\mid n\geq 0\}$ of boundary inclusions generates the 
class of cofibrations and the set $\{h_i^n\mid 0\leq i\leq n, 1\leq n\}$ of horn inclusions generates the class of 
acyclic cofibrations in the Quillen model structure $(\mathbf{S},\text{Kan})$. These acyclic cofibrations are often referred to as the \emph{anodyne} maps, and their corresponding fibrations are the \emph{Kan fibrations}.

In terms of the general calculus of Reedy structures as presented for example in \cite[Section 5.2]{hovey}, the 
object $\partial\Delta^n\setminus X$ is the $n$-th matching object of $X$. Hence, by
\cite[Theorem 5.2.5]{hovey}, a map
$f\colon X\rightarrow Y$ in $(s\mathbf{S},R_v)$ is an (acyclic) $v$-fibration if and only if the associated maps
\[\langle\delta_n\setminus f\rangle\colon X_n\rightarrow Y_n\times_{(\partial\Delta^n\setminus Y)}(\partial\Delta^n\setminus X)\]
are (acyclic) Kan fibrations in $\mathbf{S}$.
One can show that the class of cofibrations $\mathcal{C}_v$ of $(s\mathbf{S},R_v)$ is generated by the set
\begin{align}\label{equreedygencofs}
\begin{gathered}
\mathcal{I}_v:=\{\delta_n\bBox\sprime\delta_m\colon(\Delta^n\bBox\partial\Delta^m)\cup_{\partial\Delta^n\bBox\partial\Delta^m}(\partial\Delta^n\bBox\Delta^m)\rightarrow(\Delta^n\bBox\Delta^m)\mid 0\leq m,n\},
\end{gathered}
\end{align}
and the class $\mathcal{W}_v\cap\mathcal{C}_v$ of acyclic cofibrations is generated by the set
\begin{align}\label{equreedygenaccofs}
\begin{gathered}
\mathcal{J}_v:=\{\delta_n\bBox\sprime h_i^m\colon(\Delta^n\bBox\Lambda_i^m)\cup_{\partial\Delta^n\bBox\Lambda_i^m}(\partial\Delta^n\bBox\Delta^m)\rightarrow(\Delta^n\bBox\Delta^m)\mid 0\leq n,1\leq m, 0\leq i\leq m\}.
\end{gathered}
\end{align}
It is easy to show that properness of $(\mathbf{S},\text{Kan})$ implies properness of $(s\mathbf{S},R_v)$, since every
$v$-(co)fibration is also a pointwise (co)fibration, and the Reedy weak equivalences are exactly the pointwise weak 
equivalences. 

Lastly, we recall that the vertical Reedy model structure is simplicially enriched as follows. The projection
$p_2\colon\Delta\times\Delta\rightarrow\Delta$ onto the second component and the corresponding inclusion
$\iota_2=([0],\text{id})\colon\Delta\rightarrow\Delta\times\Delta$ constitute an adjoint pair $p_2\dashv\iota_2$, and 
hence give rise to an adjoint pair
\[\xymatrix{
p_2^{\ast}\colon\mathbf{S}\ar@/^.5pc/[r]\ar@{}[r]|{\bot} & s\mathbf{S}\colon\iota_2^{\ast}\ar@/^.5pc/[l],
}\]
with $(p_2^{\ast}A)_n=A$ for all $n\geq 0$, and $\iota_2^{\ast}X=X_0$ the $0$-th column of $X$. The category
$s\mathbf{S}$ is a presheaf category and as such cartesian closed. Thus, for bisimplicial sets $X$ and $Y$ we obtain an 
exponential $Y^X\in s\mathbf{S}$, and thereby a simplicial enrichment of $s\mathbf{S}$ via
$\text{Hom}_2(X,Y):=\iota_2^{\ast}(Y^X)$.

\begin{proposition}[{\cite[Propositions 2.4 and 2.6]{jtqcatvsss}}]\label{prophom2}
The simplicial enrichment $\text{Hom}_2$ on $s\mathbf{S}$ turns $(s\mathbf{S},R_v)$ into a simplicial model category.
\end{proposition}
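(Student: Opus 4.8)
The plan is to verify the two ingredients packed into the word ``simplicial model category'': first that $(s\mathbf{S},\text{Hom}_2)$ is a simplicial category tensored and cotensored over $\mathbf{S}$, and second, the substantial point, that it satisfies Quillen's pushout-product axiom SM7. For the first part I would read off the tensor and cotensor directly from the defining adjunctions. Chasing $\mathbf{S}(A,\text{Hom}_2(X,Y))=\mathbf{S}(A,\iota_2^{\ast}(Y^X))$ through $p_2^{\ast}\dashv\iota_2^{\ast}$ and the cartesian closure of $s\mathbf{S}$ gives $\mathbf{S}(A,\text{Hom}_2(X,Y))\cong s\mathbf{S}(p_2^{\ast}A\times X,Y)\cong s\mathbf{S}(X,Y^{p_2^{\ast}A})$, so the tensor is $A\otimes X=p_2^{\ast}A\times X$ and the cotensor is $Y^{p_2^{\ast}A}$. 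Since $p_2^{\ast}\Delta^0\cong\Delta^0$ is terminal, the unit condition $\text{Hom}_2(X,Y)_0\cong s\mathbf{S}(X,Y)$ holds, so the enrichment refines the ordinary hom; the associativity and unit coherences are formal consequences of the monoidal structure, and this part is routine.

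The real work is SM7 in its pushout-product form: for a $v$-cofibration $i$ and a monomorphism $j\colon A\rightarrow B$ in $\mathbf{S}$, the Leibniz tensor $i\,\widehat{\otimes}\,j$, that is, the pushout-product of $i$ with $p_2^{\ast}j$ along the cartesian product of $s\mathbf{S}$, is a $v$-cofibration, acyclic as soon as $i$ or $j$ is. By the standard saturation calculus, $\widehat{\otimes}$ carries a pair of saturated classes into the saturated class generated by the pushout-products of their generators, so it suffices to test on generators: $i$ ranges over $\mathcal{I}_v$ resp.\ $\mathcal{J}_v$ of \eqref{equreedygencofs} and \eqref{equreedygenaccofs}, and $j$ over boundary resp.\ horn inclusions. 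The device that makes these box-product generators interact with the cartesian tensor is the multiplicativity identity $(A\bBox B)\times(C\bBox D)\cong(A\times C)\bBox(B\times D)$, which for $C=\Delta^0$ reads $(A\bBox B)\otimes D\cong A\bBox(B\times D)$. Feeding this into the associativity of the Leibniz construction for functors cocontinuous in each variable yields the key identity
\[(\delta_n\bBox\sprime\tau)\,\widehat{\otimes}\,j\;\cong\;\delta_n\bBox\sprime(\tau\,\widehat{\times}\,j),\]
where $\widehat{\times}$ is the ordinary pushout-product in $\mathbf{S}$. In words, tensoring a box pushout-product by $j$ amounts to pushout-multiplying its second ($\mathbf{S}$-)variable by $j$.

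With this identity the three cases of SM7 collapse onto known facts about the Kan model structure. Taking $\tau=\delta_m$ or $\tau=h_i^m$ and using that $\widehat{\times}$ satisfies the pushout-product axiom on $(\mathbf{S},\text{Kan})$, the inner map $\tau\,\widehat{\times}\,j$ is a monomorphism, and is anodyne whenever $\tau$ is a horn inclusion or $j$ is anodyne. It then remains to observe that for fixed $\delta_n$ the functor $\delta_n\bBox\sprime(\blank)$ preserves colimits and hence saturated classes: it carries the generating cofibrations resp.\ anodyne maps of $\mathbf{S}$ into $\mathcal{I}_v$ resp.\ $\mathcal{J}_v$, and therefore sends all monomorphisms into the saturated class generating $\mathcal{C}_v$ and all anodyne maps into the saturated class generating $\mathcal{W}_v\cap\mathcal{C}_v$. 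Combining these, $\delta_n\bBox\sprime(\tau\,\widehat{\times}\,j)$ is always a $v$-cofibration, and an acyclic one as soon as $\tau$ is a horn or $j$ is anodyne, which is precisely what the three cases demand.

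The hard part is the coherence identity displayed above: one must check that the two ways of iterating the Leibniz construction on the triple $(\delta_n,\tau,j)$ through the functor $(U,V,W)\mapsto U\bBox(V\times W)$ produce canonically isomorphic arrows, and in particular that the relevant pushouts in $s\mathbf{S}$ match up so that acyclicity is transported from the $\mathbf{S}$-factor to the total map. This is formal but delicate bookkeeping. An alternative that sidesteps building $\widehat{\otimes}$ by hand is to verify SM7 in its adjoint pullback-cotensor form, where for a $v$-fibration $p$ one tests the generators $\delta_n\bBox\sprime h_i^m\pitchfork(\blank)$ against the Leibniz cotensor and then invokes \cref{boxdividadj} to reduce the resulting lifting problems to the matching-object description of $v$-fibrations; this trades the coherence bookkeeping for repeated use of the divisibility adjunctions $\langle f\setminus\blank\rangle$ and $\langle\blank/f\rangle$.
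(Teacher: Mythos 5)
The paper gives no proof of this proposition---it is quoted directly from Joyal and Tierney (\cite[Propositions 2.4 and 2.6]{jtqcatvsss}) and closed with a \verb|\qed|---so there is no internal argument to compare against. Your proof is correct and follows essentially the route taken in that cited source: identify the tensor as $p_2^{\ast}A\times X=(\Delta^0\bBox A)\times X$, reduce SM7 to the generating sets $\mathcal{I}_v$, $\mathcal{J}_v$ and the boundary/horn inclusions, and use the multiplicativity isomorphism $(A\bBox B)\times(C\bBox D)\cong(A\times C)\bBox(B\times D)$ to rewrite the Leibniz tensor as $\delta_n\bBox\sprime(\tau\mathbin{\widehat{\times}}j)$, at which point the pushout-product axiom for $(\mathbf{S},\mathrm{Kan})$ and the saturation of $\mathcal{C}_v$ and $\mathcal{W}_v\cap\mathcal{C}_v$ finish the three cases.
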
\qed

It follows that the Reedy model structure $(s\mathbf{S},R_v)$ is a simplicial and left proper cellular model category in 
the sense of \cite[12]{hirschhorn03}. Hence, by \cite[Theorem 4.1.1]{hirschhorn03}, given a set of maps
$A\subset s\mathbf{S}$, the left Bousfield localization of $(s\mathbf{S},R_v)$ at $A$ exists, and is simplicial, left 
proper and cellular again. We will denote this localization by $\mathcal{L}_A(s\mathbf{S},R_v)$. Its fibrant objects are 
exactly the $A$-local $v$-fibrant objects.

We hereby conclude the discussion of the vertical Reedy model structure.

The permutation $\sigma:=(p_2, p_1):\Delta\times\Delta\rightarrow\Delta\times\Delta$ induces 
an isomorphism $\sigma^{\ast}:s\mathbf{S}\rightarrow s\mathbf{S}$ which transports the vertical Reedy model 
structure into the \emph{horizontal Reedy model structure} $R_h$. Its class of cofibrations is given by
$\mathcal{C}_h=\{\text{monomorphisms in }s\mathbf{S}\}$, its weak equivalences are the rowwise weak homotopy 
equivalences,
\[\mathcal{W}_h=\{f\colon X\rightarrow Y\mid f_{\bullet n}\colon X_{\bullet n}\rightarrow Y_{\bullet n}\text{ is a weak homotopy equivalence for all }n\geq 0\}.\]
Its cofibrations and acyclic cofibrations are generated by the sets $\mathcal{I}_h=\mathcal{I}_v$ and
\[\mathcal{J}_h=\{h_i^n\bBox\sprime\delta_m\colon(\Delta^n\bBox\partial\Delta^m)\cup_{\Lambda_i^n\bBox\partial\Delta^m}(\Lambda_i^n\bBox\Delta^m)\rightarrow(\Delta^n\bBox\Delta^m)\mid 0\leq n, 1\leq m, 0\leq i\leq m\}\]
respectively. We denote its class of fibrations by $\mathcal{F}_h$.
In analogy to the pair $p_2^{\ast}\dashv\iota_2^{\ast}$, we have an adjunction
\begin{align}\label{defp_1}
\xymatrix{
p_1^{\ast}\colon\mathbf{S}\ar@/^.5pc/[r]\ar@{}[r]|{\bot} & s\mathbf{S}\colon\iota_1^{\ast}\ar@/^.5pc/[l]
}
\end{align}
with $(p_1^{\ast}A)_{\bullet n}=A$ for all $n\geq 0$, and $\iota_1^{\ast}X=X_{\bullet 0}$ the $0$-th row of $X$.

Joyal and Tierney show in \cite{jtqcatvsss} that $(s\mathbf{S},R_v)$ naturally comes equipped 
with two orthogonal projections, a Quillen right adjoint
$\iota_1^{\ast}\colon(s\mathbf{S},R_v)\rightarrow(\mathbf{S},\mathrm{Kan})$ on the one hand, and a mere right adjoint
$\iota_2^{\ast}\colon s\mathbf{S}\rightarrow\mathbf{S}$ on the other. In order to construct a homotopy 
theory of $(\infty,1)$-categories in $s\mathbf{S}$, they localize $(s\mathbf{S},R_v)$ at a suitable set 
of maps such that the horizontal projection $i_2^{\ast}\colon s\mathbf{S}\rightarrow\mathbf{S}$ 
becomes a Quillen right adjoint (and in fact part of a Quillen equivalence) to the Joyal model structure
$(\mathbf{S},\mathrm{QCat})$.
In the process, Segal spaces arise naturally as an intermediate step; their individual horizontal projections 
already yield quasi-categories objectwise.
 
In order to construct a homotopy theory of $\infty$-\emph{groupoids}, one can localize
$(s\mathbf{S},R_v)$ at a larger class of maps such that the horizontal projection
$\iota_2^{\ast}\colon s\mathbf{S}\rightarrow\mathbf{S}$ becomes a Quillen right adjoint (and in fact part of a Quillen 
equivalence) to the model structure for \emph{Kan complexes} $(\mathbf{S},\mathrm{Kan})$ as we will see in
Section~\ref{chcbs}. The corresponding intermediate objects, the Segal spaces with invertible arrows, are the 
subject of the following three sections.

\section{Bousfield-Segal spaces}\label{secbs}

In this section, we give the definition of Bousfield-Segal spaces as introduced in \cite[6]{bergner2} and describe an 
associated fraction operation they naturally come equipped with. The definition is phrased in terms of
\emph{Bousfield maps} associated to a bisimplicial set, and in order to motivate these let us first recall the 
definition of Segal spaces and their associated Segal maps.

Let $\mathrm{sp}_n\colon \mathrm{Sp}_n\hookrightarrow\Delta^n$ be the $n$-th spine-inclusion, i.e.\
\[\mathrm{Sp}_n=\bigcup_{i<n}\zeta_i[\Delta^1]\]
for $\zeta_i\colon[1]\rightarrow[n]$, $0\mapsto i$, $1\mapsto i+1$. Localizing $(s\mathbf{S},R_v)$ at 
the set of horizontally constant diagrams
\[\mathrm{S}:=\{p_1^{\ast}(\mathrm{sp}_n)\colon p_1^{\ast}(\mathrm{Sp}_n)\hookrightarrow p_1^{\ast}(\Delta^n)\mid 2\leq n\}\]
yields the left proper cellular simplicial model structure
$(s\mathbf{S},S):=\mathcal{L}_S(s\mathbf{S},R_v)$ whose fibrant objects are the \emph{Segal spaces} as 
defined in \cite[Section 4.1]{rezk} and \cite[Definition 3.1]{jtqcatvsss}. By construction, these are $v$-fibrant 
bisimplicial sets $X$ such that the maps
\begin{align}\label{equsecbs1}
(p_1^{\ast}(\mathrm{sp}_n))^{\ast}\colon\mathrm{Hom}_2(p_1^{\ast}(\Delta^n),X)\rightarrow\mathrm{Hom}_2(p_1^{\ast}(\mathrm{Sp}_n),X)
\end{align}
are weak homotopy equivalences for all $n\geq 2$.

For each $n\geq 2$, the map (\ref{equsecbs1}) is isomorphic to
$\mathrm{sp}_n\setminus X\colon\Delta^n\setminus X\rightarrow \mathrm{Sp}_n\setminus X$. Thus, its codomain is the 
pullback $X_1\times_{X_0}\dots\times_{X_0}X_1$ taken along the boundaries $d_0\setminus X$ and $d_1\setminus X$ 
successively. In the following, we denote this pullback by $X_1\times^S_{X_0}\dots\times^S_{X_0}X_1$. Then we define the \emph{Segal maps}
\begin{equation}\label{smap}
\xi_n\colon X_n\rightarrow X_1\times^S_{X_0}\dots\times^S_{X_0}X_1
\end{equation}
via $\xi_n:=\mathrm{sp}_n\setminus X$ for $n\geq 2$, such that Segal spaces are the $v$-fibrant bisimplicial sets whose 
associated Segal maps are acyclic fibrations. We can think of a Segal space $X$ as a simplicial collection of Kan 
complexes, where $X_0$ is its space of objects and $X_1$ is its space of edges. It comes equipped with a horizontal weak 
composition in the form of the diagram
$\xymatrix{X_1\times_{X_0}^S X_1 & X_2\ar@{->>}[l]^(.3){\sim}_(.3){\xi_2}\ar[r]^{d_1} & X_1}$ whose higher compositional
laws are encoded in the Segal maps $\xi_n$ for $n>2$. 

In a similar fashion, $v$-fibrant bisimplicial sets can carry a weak horizontal fractional structure as referred to in the Introduction. It is induced by acyclicity of their associated Bousfield maps defined as follows.
Consider the function $\gamma_i\colon[1]\rightarrow[n]$ mapping $0\mapsto 0$, $1\mapsto i$ and let
\[C_{n}:=\bigcup_{0<i}\gamma_i[\Delta^1]\]
be the $1$-skeletal cone whose pinnacle is the initial vertex $0\in\Delta^n$. We will refer to its 
edges as the \emph{initial edges} of $\Delta^n$ and let $c_{n}\colon C_{n}\hookrightarrow\Delta^n$ 
denote the canonical inclusion. Localizing $(s\mathbf{S},R_v)$ at the set of horizontally 
constant diagrams
\[B:=\{p_1^{\ast}(c_{n})\colon p_1^{\ast}(C_{n})\rightarrow p_1^{\ast}(\Delta^n)\mid n\geq 2\}\]
yields a model structure $(s\mathbf{S},B):=\mathcal{L}_B(s\mathbf{S},R_v)$. This model structure was introduced in 
\cite[6]{bergner2}, Bergner calls its fibrant objects \emph{Bousfield-Segal spaces}.
A $v$-fibrant bisimplicial set $X$ is $B$-local if and only if the fibrations
$c_{n}\setminus X\colon \Delta^n\setminus X\twoheadrightarrow C_{n}\setminus X$ are weak homotopy 
equivalences. Here, $C_{n}\setminus X\cong X_1\times_{X_0}\dots\times_{X_0}X_1$ is the $n$-fold fibre 
product of $X_1$ over $X_0$ along $d_1$ everywhere. We distinguish this pullback notationally by 
$X_1\times^B_{X_0}\dots\times^B_{X_0}X_1$. We define the \emph{Bousfield maps}
\[\beta_n\colon X_n\rightarrow X_1\times^B_{X_0}\dots\times^B_{X_0}X_1\]
of $X$ via $\beta_n:=c_{n}\setminus X$.

\begin{definition}\label{defbspace}
Let $X$ be a $v$-fibrant bisimplicial set. We say that $X$ is a \emph{Bousfield-Segal space} if the Bousfield maps
\begin{equation}\label{bmap}
\beta_n\colon X_n\rightarrow X_1\times^B_{X_0}\dots\times^B_{X_0}X_1
\end{equation}
are weak homotopy equivalences for all $n\geq 2$.
\end{definition}

Given a Bousfield-Segal space $X$, the fibration $\beta_2\colon X_2\twoheadrightarrow X_1\times_{X_0}^BX_1$ admits a 
section $\mu_2$ and thus yields a composite map
\begin{align}\label{equdeffrac}
\blank / \blank\colon  X_1\times_{X_0}^BX_1\xrightarrow{\mu_2}X_2\xrightarrow{d_0}X_1.
\end{align}
From now on we refer to this map as the \emph{fraction} operation associated to $X$.
On the horizontal simplicial sets $X_{\bullet m}$ it may be illustrated as follows.
\[
\vcenter{
\xymatrix{
                               & z & \\
 x\ar@/^/[ur]^{g}\ar@/_/[rr]_{f} & & y\\
  }}
  \mapsto
\vcenter{
  \xymatrix{
                               & z\ar@/^/[dr]^{f/g}\ar @{} [d] |{\mu_2(f,g)} & \\
 x\ar@/^/[ur]^{g}\ar@/_/[rr]_{f} & & y\\
  }}
  \mapsto
\vcenter{
  \xymatrix{
 z\ar@/^/[dr]^{f/g} & \\
 & y\\
 }}
  \]

\begin{definition}
Given a bisimplicial set $X$, for vertices $x\in X_{00}$ we write $1_x:=s_0 x$ and for $v,w\in X_{n0}$ we write
$v\sim w$ if $[v]=[w]\in\pi_{0}X_n$. If $X$ is $v$-fibrant and $x,y\in X_{00}$ are vertices, the \emph{hom-space} $X(x,y)$ 
denotes the pullback of $(d_1,d_0)\colon X_1\rightarrow X_0\times X_0$ along $(x,y)\in X_{00}\times X_{00}$. 
\end{definition}

The following Lemma shows that the operation (\ref{equdeffrac}) satisfies the axioms of an abstract fraction operation 
emerging in ordinary group theory in a many-sorted and homotopy-coherent manner.

\begin{lemma}\label{fractionlaws}
For any Bousfield-Segal space $X$ and $x,y,z\in X_{00}$, the fraction operation restricts to a map
\[\blank/\blank\colon X_1(x,y)\times X_1(x,z)\rightarrow X_1(z,y).\]
Then
\begin{enumerate}[label=(\arabic*)]
\item $f/f\sim 1_y$ for all vertices $f\colon x\rightarrow y$ in $X_{1}$,
\item $f/1_{x}\sim f$ for all vertices $f\colon x\rightarrow y$ in $X_{1}$,
\item $f/g\sim (f/h)/(g/h)$ for all triples $(f,g,h)\in X_1\times_{X_0}^B X_1\times_{X_0}^B X_1$.
\end{enumerate}
\end{lemma}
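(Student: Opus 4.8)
The plan is to exploit that for a Bousfield-Segal space each Bousfield map $\beta_n$ ($n\geq 2$) is not merely a weak equivalence but, being $c_n\setminus X$ with $X$ $v$-fibrant, an \emph{acyclic} Kan fibration of the columns; hence every fibre $\beta_n^{-1}(v)$ over a vertex $v$ is a contractible, in particular path-connected, Kan complex, and $\beta_n$ induces a bijection on $\pi_0$. Before treating the three laws I would record two elementary facts. First, the asserted restriction of $\blank/\blank$ to hom-spaces is purely simplicial: writing $f/g=d_0\mu_2(g,f)$, the simplicial identities give that the source of $d_0\mu_2(g,f)$ is the target of $g$ and its target is the target of $f$, so for $f\colon x\to y$ and $g\colon x\to z$ one lands in $X_1(z,y)$. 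Second, for every vertex $\sigma\in X_{20}$ the simplices $\sigma$ and $\mu_2(\beta_2\sigma)$ lie in the same connected fibre of $\beta_2$, whence $\sigma\sim\mu_2(\beta_2\sigma)$ and therefore, applying $d_0\colon X_2\to X_1$, also $d_0\sigma\sim d_0\mu_2(\beta_2\sigma)$; in other words $[d_0\sigma]\in\pi_0 X_1$ depends only on $[\beta_2\sigma]$.

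For laws (1) and (2) I would exhibit degenerate $2$-simplices realising the identities and compare them with the chosen section via this principle. For (1), the degeneracy $s_1f$ has faces $d_2s_1f=d_1s_1f=f$ and $d_0s_1f=s_0d_0f=1_y$, so $\beta_2(s_1f)=(f,f)=\beta_2\mu_2(f,f)$; since both lie in the connected fibre over $(f,f)$, applying $d_0$ yields $f/f=d_0\mu_2(f,f)\sim d_0 s_1f=1_y$. For (2), the degeneracy $s_0f$ has $d_2s_0f=s_0d_1f=1_x$, $d_1s_0f=f$ and $d_0s_0f=f$, so $\beta_2(s_0f)=(1_x,f)=\beta_2\mu_2(1_x,f)$ and the same argument gives $f/1_x=d_0\mu_2(1_x,f)\sim d_0s_0f=f$.

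The substantial case is (3), where a genuine $3$-simplex is needed to relate fractions with different denominators, and here the acyclicity of $\beta_3$ enters. Writing $f\colon x\to y$, $g\colon x\to z$, $h\colon x\to w$, I would use that $\beta_3$ is a surjective fibration to choose a $3$-simplex $\tau\in X_{30}$ with $\beta_3(\tau)=(h,g,f)$, i.e.\ with initial edges $0\to1=h$, $0\to2=g$, $0\to3=f$, and denote its remaining edges $a=(1\to2)$, $b=(1\to3)$, $c=(2\to3)$. The faces $d_3\tau$, $d_2\tau$, $d_1\tau$ have $\beta_2$-images $(h,g)$, $(h,f)$, $(g,f)$ respectively, so the principle above forces $a\sim g/h$, $b\sim f/h$ and $c=d_0d_1\tau\sim f/g$. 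It remains to identify $c$ through the last face $d_0\tau$, whose initial edges are $a$ and $b$ and whose $d_0$-edge is again $c$, since $d_0d_0=d_0d_1$; thus $c\sim d_0\mu_2(a,b)$. I would then transport $(a,b)$ to $(g/h,f/h)$: all of $a,b,g/h,f/h$ have source the fixed vertex $w$, the target of $h$, and the homotopies $a\rightsquigarrow g/h$, $b\rightsquigarrow f/h$ are obtained by applying $d_0$ to paths in the fibres $\beta_2^{-1}(h,g)$ and $\beta_2^{-1}(h,f)$, along which the edge $0\to1$ stays equal to $h$, so that the source of the $d_0$-edge is constantly $w$. These therefore assemble to a path from $(a,b)$ to $(g/h,f/h)$ in $X_1\times^B_{X_0}X_1$, and since $d_0\mu_2$ is a map of simplicial sets it respects $\pi_0$, giving $c\sim d_0\mu_2(g/h,f/h)=(f/h)/(g/h)$; comparison with $c\sim f/g$ then finishes the proof.

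The main obstacle I anticipate is exactly this transport step: the relation $\sim$ lives in $\pi_0 X_1$, and to pass from $a\sim g/h$, $b\sim f/h$ to $(a,b)\sim(g/h,f/h)$ in the fibre product one must control the connecting homotopies compatibly over $X_0$. Restricting to the fibre over the single vertex $w$, where the Bousfield fibre product degenerates to an honest product of the spaces $X_1(w,z)$ and $X_1(w,y)$, is what makes this clean and lets $\pi_0$ be computed factorwise; everything else reduces to the connectedness of the fibres of $\beta_2$ and $\beta_3$ together with routine bookkeeping of the simplicial identities.
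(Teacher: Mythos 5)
Your proof is correct: the paper itself dismisses this lemma with ``straightforward calculation,'' and your argument --- reducing (1) and (2) to the degeneracies $s_1f$ and $s_0f$ via connectedness of the fibres of the acyclic fibration $\beta_2$, and (3) to a $3$-simplex lifted along $\beta_3$ together with a transport of the pair $(a,b)$ to $(g/h,f/h)$ inside the fibre of $d_1$ over the common source $w$ --- is exactly the kind of computation intended, and it matches the technique the paper does spell out in the proof of Corollary~\ref{B=BSspaces}. The one point worth stating explicitly in a final write-up is the observation you already make at the end, namely that the connecting paths have constant $d_1$-image $s_0w$ (since $d_1d_0=d_0d_2$ and the $d_2$-component is constant at $h$ along a path in the fibre), which is what legitimises assembling them into a single path in $X_1\times^B_{X_0}X_1$ before applying $d_0\mu_2$.
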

\begin{proof}
Straightforward calculation.
\end{proof}
The maps $\mu_2$ and $d_0$ are natural transformations of simplicial sets, hence the operation $\blank/\blank$ descends 
to homotopy classes. Therefore, for the family of sets
\[\mathrm{Ho}_B(X):=(\pi_0 X_1(x,y)\mid (x,y)\in X_{00}\times X_{00})\]
we obtain the following corollary.

\begin{corollary}\label{HoBSisgrpd}
The family $\mathrm{Ho}_B(X)$ together with the operation $\blank\circ\blank$, defined as the composite
\[\mathrm{Ho}_B(X)(y,z)\times\mathrm{Ho}_B(X)(x,y)\xrightarrow{}\mathrm{Ho}_B(X)(y,z)\times\mathrm{Ho}_B(X)(y,x)\xrightarrow{}\mathrm{Ho}_B(X)(x,z)\]
\[([g],[f])\mapsto[g]/([1_x]/[f])=[g/(1_x/f)],\]
is a groupoid.
\end{corollary}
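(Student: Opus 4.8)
The plan is to read the three identities of Lemma~\ref{fractionlaws} as genuine equations in the sets $\mathrm{Ho}_B(X)(x,y)=\pi_0 X_1(x,y)$ and to derive the groupoid axioms from them purely formally, in the many-sorted analogue of the classical passage from a division operation to a group. The operation $\circ$ is well defined because $\blank/\blank$ descends to homotopy classes and $1_x$ is a fixed element, so the two-step description in the statement is independent of representatives. Throughout I abbreviate, for $f\colon x\to y$, the class $[1_x]/[f]\in\mathrm{Ho}_B(X)(y,x)$ by $\overline{[f]}$, so that by definition $[g]\circ[f]=[g]/\overline{[f]}$.

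First I would record two auxiliary identities obtained from law~(3) by substituting an identity for one of the three common-domain arrows. Writing $[1_y]=[f]/[f]$ by~(1) and applying~(3) in reverse to the triple $(f,1_x,f)$ of arrows out of $x$ gives $\overline{\overline{[f]}}=[1_y]/\overline{[f]}=([f]/[f])/([1_x]/[f])=[f]/[1_x]=[f]$, so $\overline{(\blank)}$ is an involution. Likewise, for $a,b$ out of a common vertex, writing $[1]=[b]/[b]$ and applying~(3) to $(b,a,b)$ yields the reversal law $\overline{[a]/[b]}=[b]/[a]$. Together with~(1) and~(2) these reduce the unit and inverse axioms to one-line checks: $[f]\circ[1_x]=[f]/\overline{[1_x]}=[f]/[1_x]=[f]$ and $[1_y]\circ[f]=[1_y]/\overline{[f]}=\overline{\overline{[f]}}=[f]$ give the units, while $\overline{[f]}$ is a two-sided inverse since $[f]\circ\overline{[f]}=[f]/[f]=[1_y]$ and $\overline{[f]}\circ[f]=\overline{[f]}/\overline{[f]}=[1_x]$ by~(1).

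The substantive point, and the step I expect to be the main obstacle, is associativity, where the bookkeeping of the common-domain hypothesis in law~(3) is essential and easy to get wrong. For $f\colon x\to y$, $g\colon y\to z$, $h\colon z\to w$ I would first establish the cancellation identity $([h]/\overline{[g]})/[g]=[h]$: since $[g]=[1_z]/\overline{[g]}$ by the involution, the left-hand side has the shape $([h]/\overline{[g]})/([1_z]/\overline{[g]})$, and~(3) in reverse applied to the triple $(h,1_z,\overline{g})$ out of $z$ collapses it to $[h]/[1_z]=[h]$. Granting this, associativity unwinds as follows: applying~(3) to the triple $(h/\overline{g},\overline{f},g)$ of arrows out of $y$ gives $([h]/\overline{[g]})/\overline{[f]}=(([h]/\overline{[g]})/[g])/(\overline{[f]}/[g])$, the first factor simplifies to $[h]$ by cancellation, and the reversal law rewrites $\overline{[f]}/[g]=\overline{[g]/\overline{[f]}}=\overline{[g]\circ[f]}$; hence $([h]\circ[g])\circ[f]=[h]/\overline{[g]\circ[f]}=[h]\circ([g]\circ[f])$. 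The only real care needed is to verify at each invocation of~(3) that the three arrows genuinely share a domain, which is precisely what the preceding type computations guarantee.
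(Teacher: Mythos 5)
Your proposal is correct, and since the paper's own proof of this corollary is simply ``straightforward calculation,'' you have in effect supplied exactly that calculation: the classical many-sorted derivation of a group structure from a division operation, using the involution $\overline{[f]}=[1_x]/[f]$, the reversal law, and careful tracking of common domains in law~(3). All invocations of the fraction laws typecheck, so nothing further is needed.
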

\begin{proof}
Straightforward calculation.
\end{proof}

This fraction operation on Bousfield-Segal spaces is in its essence already present in \cite{bousfieldnotes}.
Corollary~\ref{HoBSisgrpd} says that the fraction operation on a Bousfield-Segal space $X$ induces an invertible 
composition on the quotient of $X$ under some of its homotopical data. In the course of the following two sections we 
lift this statement to the level of the homotopy-coherent data itself.

\section{Bousfield-Segal spaces are $B$-local Segal spaces}\label{secb=bs}

Despite the suggestive name it is not clear a priori that Bousfield-Segal spaces as defined 
in the previous section are in fact Segal spaces. In this section we show that Bousfield-Segal spaces are exactly
the $B$-local Segal spaces.

Let $X$ be a Bousfield-Segal space and recall the notation from (\ref{smap}) and (\ref{bmap}) 
for its associated Segal and Bousfield maps respectively. Then by Definition~\ref{defbspace}, its Bousfield maps 
$\beta_n\colon X_n\twoheadrightarrow X_1\times_{X_0}^B\dots\times_{X_0}^B X_1$ are acyclic 
fibrations. In order to show that $X$ is a Segal space, we have to derive that its 
Segal maps $\xi_n\colon X_n\twoheadrightarrow X_1\times_{X_0}^S\dots\times_{X_0}^S X_1$ are acyclic as well.

This means we want to show that the functor $\blank\setminus X\colon \mathbf{S}^{op}\rightarrow\mathbf{S}$ takes the 
spine inclusions $\mathrm{sp}_n$ to acyclic fibrations whenever it takes the left cone inclusions $c_n$ to such.
The proof can thus be boiled down to verifying that every class of morphisms in $\mathbf{S}$ which is subject to 
suitable closure properties contains the spine inclusions whenever it contains the left cone inclusions. Therefore, the 
discussion in this section will take place almost entirely in the category of simplicial sets.

We proceed in the following steps. Let $A$ be a class of arrows in $\mathbf{S}$ which contains the left cone inclusions 
and furthermore is saturated and satisfies 3-for-2 for monomorphisms (these are the suitable closure properties referred 
to above). Then, first, via a reduction to the left horn inclusions, we show that for every $n\geq 2$ the canonical 
embedding of $\Delta^n=N([n])$ into the free groupoid $I\Delta^n=N(I[n])$ generated by it is contained in $A$ as well. 
In fact, we will need only the case $n=2$ here, but the full result will be applied later in Section~\ref{secinvedges}. 
The case $n=2$ will be used to show that $c_2\in A\Leftrightarrow I[c_2]\in A$ and that
$\mathrm{sp}_2\in A\Leftrightarrow I[\mathrm{sp}_2]\in A$, where the maps on the right hand side are canonically induced 
inclusions into the free groupoid $I\Delta^2$. Now, while the automorphism group of the category $[2]$ is 
trivial, the groupoid $I[2]$ comes equipped with a non-trivial automorphism which interchanges its associated spine and 
cone inclusions $I[\mathrm{sp}_2]$ and $I[c_2]$. That means $I[c_2]\in A\Leftrightarrow I[\mathrm{sp}_2]\in A$, which 
together with the two equivalences above proves that $\mathrm{sp}_2\in A$. The case $n>2$ will then follow from $n=2$ by 
a factorization argument. 

With this road map in mind, we first relate the left cone inclusions and the left horn inclusions. The following 
lemma is a variation of \cite[Lemma 3.5]{jtqcatvsss} which is a similar statement for essential edges.

\begin{lemma}\label{ihicinterchange}
Let $A$ be a saturated class of morphisms in $\mathbf{S}$. Suppose further that $A$ has the right 
cancellation property for monomorphisms, i.e.\ $vu\in A$ and $u\in A$ imply $v\in A$ for all monomorphisms
$u,v\in\mathbf{S}$. Then $\{h_0^n\mid n\geq 2\}\subseteq A$ if and only if $\{c_{n}\mid n\geq 2\}\subseteq A$.
\end{lemma}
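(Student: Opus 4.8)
The plan is to exploit the common cone structure shared by $C_n$, $\Lambda_0^n$ and $\Delta^n$. Writing $\Delta^{n-1}$ for the face of $\Delta^n$ spanned by $\{1,\dots,n\}$ (opposite the vertex $0$), and $0\ast(\blank)$ for the join with cone point $0$, I would first record the identifications
\[ C_n = 0\ast\mathrm{sk}_0\Delta^{n-1}, \qquad \Lambda_0^n = 0\ast\partial\Delta^{n-1}, \qquad \Delta^n = 0\ast\Delta^{n-1}, \]
each of which is an elementary check on simplices: a subset $\sigma\subseteq[n]$ lies in $\Lambda_0^n$ iff $\{1,\dots,n\}\not\subseteq\sigma$, which is exactly the description of $0\ast\partial\Delta^{n-1}$, and $C_n$ is the join of $0$ with the discrete set $\{1,\dots,n\}$. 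Under these identifications the left cone inclusion factors as $c_n = h_0^n\circ k_n$, where $k_n\colon C_n\hookrightarrow\Lambda_0^n$ is the evident inclusion and $h_0^n\colon\Lambda_0^n\hookrightarrow\Delta^n$ is the $0$-horn.

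The technical heart is to show that $k_n$ lies in the saturated class generated by the lower horns $\{h_0^m\mid 2\le m\le n-1\}$. For this I would filter $\Lambda_0^n$ over $C_n$ by the skeleta of $\partial\Delta^{n-1}$: the passage from $0\ast\mathrm{sk}_{k-1}\partial\Delta^{n-1}$ to $0\ast\mathrm{sk}_k\partial\Delta^{n-1}$ adjoins, for each nondegenerate $k$-simplex $\tau$ of $\partial\Delta^{n-1}$, both $\tau$ itself and the coned simplex $0\ast\tau\cong\Delta^{k+1}$. The key observation is that every proper face of $0\ast\tau$ other than the face $\tau$ opposite the cone point $0$ is a cone on a lower face of $\tau$, hence already present; so this adjunction is precisely a pushout of the horn inclusion $h_0^{k+1}$. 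Letting $k$ run from $1$ to $n-2$ exhibits $k_n$ as a finite composite of pushouts of $h_0^2,\dots,h_0^{n-1}$, whence $k_n$ belongs to any saturated class containing those horns. (For $n=2$ one has $C_2=\Lambda_0^2$, so $k_2$ is an identity and the base of the later induction is trivial.)

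Granting this, both implications are short. For $\{h_0^n\}\subseteq A\Rightarrow\{c_n\}\subseteq A$: the lower horns lie in $A$, so $k_n\in A$ by the filtration, and then $c_n=h_0^n\circ k_n\in A$ by closure of $A$ under composition. For the converse $\{c_n\}\subseteq A\Rightarrow\{h_0^n\}\subseteq A$ I would induct on $n$. The base case $n=2$ is immediate since $c_2=h_0^2$. For the inductive step the hypothesis supplies $h_0^m\in A$ for $2\le m\le n-1$, whence $k_n\in A$ by the filtration; as $c_n=h_0^n\circ k_n\in A$ with the monomorphism $k_n\in A$, the right cancellation property for monomorphisms yields $h_0^n\in A$.

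The step I expect to be the main obstacle is the bookkeeping in the filtration: one must verify carefully that the attaching map of each coned simplex $0\ast\tau$ is exactly the $0$-horn — i.e.\ that the single missing face is the one opposite the cone point — and that processing simplices in order of increasing dimension makes all horn faces available, so that the tower is genuinely a sequence of $h_0$-pushouts. Once that is nailed down, the join identifications and the two cancellation/composition arguments are purely formal.
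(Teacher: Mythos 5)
Your proof is correct, and it shares the paper's overall skeleton --- factor $c_n = h_0^n\circ k_n$ through the left horn and reduce both implications to showing $k_n\in A$ --- but the way you establish $k_n\in A$ is genuinely different. The paper fills in $\Lambda_0^n$ from $C_n$ by attaching the codimension-one faces $d^j[\Delta^{n-1}]$ one at a time; the attaching maps there are inclusions of the form $C_{m}\cup\bigcup_{0<j\leq i}d^j[\Delta^{m-1}]\hookrightarrow\Delta^{m}$, and showing these lie in $A$ forces the paper to invoke the right cancellation property and the hypothesis that the lower-dimensional cone inclusions already lie in $A$ \emph{inside} the induction. Your join/skeletal filtration $0\ast\mathrm{sk}_{k-1}\partial\Delta^{n-1}\subseteq 0\ast\mathrm{sk}_{k}\partial\Delta^{n-1}$ instead attaches the coned simplices $0\ast\tau$ in order of increasing dimension, and each attachment is an honest pushout of $\coprod h_0^{k+1}$: the already-present part of $\partial(0\ast\tau)$ is exactly the union of the faces through the cone point, i.e.\ $\Lambda_0^{k+1}$, and distinct coned $k$-simplices meet in $0\ast(\tau\cap\tau\sprime)\subseteq 0\ast\mathrm{sk}_{k-1}\partial\Delta^{n-1}$, so the squares really are pushouts. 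This yields the stronger, unconditional statement that $k_n$ is a cell complex built from $h_0^2,\dots,h_0^{n-1}$ alone, so saturation suffices to place $k_n$ in $A$ in both directions, and right cancellation is used exactly once, at the final step of the converse. The paper's version keeps the combinatorics at the level of whole faces of $\Delta^n$ but pays for it by threading cancellation through the induction; yours is essentially the standard ``cone on a cellular inclusion is built from left horns'' argument and is, if anything, cleaner and slightly more informative.
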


\begin{proof}
Let $k_n\colon C_{n}\rightarrow\Lambda^n_0$ be the canonical inclusion of simplicial sets, such that
the map $c_{n}\colon C_{n}\hookrightarrow\Delta^n$ factors through the inclusions
\[C_{n}\xrightarrow{k_n}\Lambda_0^n\xrightarrow{h_0^n}\Delta^n.\]
By assumption, it suffices to show that $k_n\in A$ for all $n\geq 2$ for both directions.
Therefore, we show that the inclusions $k_{n}$ can be constructed from the lower dimensional left 
cone inclusions $c_m$ by pasting them together recursively in such a way that whenever $c_m\in A$ for all $m<n$ we stay 
inside $A$ at each step along the way. The two implications will then be shown to follow from this construction in the 
last paragraph.

For $n\geq 2$ and $0<i\leq n$, consider the inclusion 
\begin{align}\label{equihicinterchange1}
C_{n}\hookrightarrow C_{n}\cup\bigcup_{0<j\leq i}d^j[\Delta^{n-1}].
\end{align}
Note that for $i=n$ this inclusion is exactly $k_n$. We show by induction that the inclusion (\ref{equihicinterchange1}) 
is contained in $A$ for all $0<i\leq n$ whenever the set $\{c_{m}\mid 2\leq m<n\}$ is contained in $A$.

For $n=2$ and $0<i\leq 2$ the inclusion (\ref{equihicinterchange1}) is exactly the identity
$k_2\colon C_2\rightarrow\Lambda^2_0$ and is hence contained in $A$.
Given $n\geq 2$, assume the inclusion (\ref{equihicinterchange1}) is contained in $A$ for every $0<i\leq n$. We 
now show that the inclusion
\[C_{n+1}\hookrightarrow C_{n+1}\cup\bigcup_{0<j\leq i}d^j[\Delta^{n}]\]
is contained in $A$ for every $0<i\leq n+1$. There is a pushout square
\begin{align}\label{diagihicinterchange1}
\begin{gathered}
\xymatrix{
C_{n}\ar[r]^(.3){\cong}_(.3){d^1}\ar@{^(->}[d]_{c_{n}} & C_{n+1}\cap d^1[\Delta^{n}]\ar@{^(->}[r]\ar@{^(->}[d]\ar @{}[dr]|(.7){\pos} & C_{n+1}\ar@{^(->}[d] \\
\Delta^{n}\ar[r]^(.4){\cong}_(.4){d^1} & d^1[\Delta^{n}]\ar@{^(->}[r] & C_{n+1}\cup d^1[\Delta^{n}]\\
}
\end{gathered}
\end{align}
where the boundaries $d^1$ in the left square are isomorphisms, because the coboundary
$d^1\colon[n]\rightarrow [n+1]$ is a monomorphism. This implies that the canonical inclusion
\[c_{(1,n+1)}\colon C_{n+1}\hookrightarrow C_{n+1}\cup d^1[\Delta^{n}]\]
is contained in $A$. Similarly, for $0<i\leq n$ we have a pushout square
\begin{align}\label{diagihicinterchange2}
\begin{gathered}
\xymatrix{
d^{i+1}[\Delta^{n}]\cap(C_{n+1}\cup\bigcup_{0<j\leq i}d^j[\Delta^{n}])\ar@{^(->}[r]\ar@{^(->}[d]\ar @{}[dr]|(.7){\pos} & C_{n+1}\cup\bigcup_{0<j\leq i}d^j[\Delta^{n}]\ar@{^(->}[d]^{} \\
d^{i+1}[\Delta^{n}]\ar@{^(->}[r] & C_{n+1}\cup\bigcup_{0<j\leq i+1}d^j[\Delta^{n}]
}
\end{gathered}
\end{align}
and isomorphisms
\begin{align}\label{diagihicinterchange3}
\xymatrix{
C_{n}\cup\bigcup_{0<j\leq i}d^j[\Delta^{n-1}]\ar[r]^(.4){\cong}_(.4){d^{i+1}}\ar@{^(->}[d]^{} & d^{i+1}[\Delta^{n}]\cap(C_{n+1}\cup\bigcup_{0<j\leq i}d^j[\Delta^{n}])\ar@{^(->}[d]\\
\Delta^{n}\ar[r]^(.4){\cong}_(.4){d^{i+1}} & d^{i+1}[\Delta^{n}].
}
\end{align}
Here, the upper boundary $d^{i+1}$ is an isomorphism, because
\begin{align*}
d^{i+1}[\Delta^{n}]\cap(C_{n+1}\cup\bigcup_{0<j\leq i}d^j[\Delta^{n}]) & = (d^{i+1}[\Delta^{n}]\cap C_{n+1})\cup\bigcup_{0<j\leq i}(d^{i+1}[\Delta^{n}]\cap d^j[\Delta^{n}]) \\
 & = (d^{i+1}[\Delta^{n}]\cap C_{n+1})\cup\bigcup_{0<j\leq i}d^{i+1}d^j[\Delta^{n-1}] \\
 & = d^{i+1}[C_{n}\cup\bigcup_{0<j\leq i}d^j[\Delta^{n-1}]].
\end{align*}
By the inductive hypothesis, the inclusion $C_{n}\hookrightarrow C_{n}\cup\bigcup_{0<j\leq i}d^j[\Delta^{n}]$ is
contained in $A$. But then, by the right cancellation property of $A$, the inclusion
$C_{n}\cup\bigcup_{0<j\leq i}d^j[\Delta^{n}]\hookrightarrow\Delta^n$ is contained in
$A$, too. Therefore, via (\ref{diagihicinterchange2}) and (\ref{diagihicinterchange3}), the canonical inclusion
\[c_{(i+1,n+1)}\colon C_{n+1}\cup\bigcup_{0<j\leq i}d^j[\Delta^{n}]\hookrightarrow C_{n+1}\cup\bigcup_{0<j\leq i+1}d^j[\Delta^{n}]\]
is contained in $A$ for every $0<i\leq n+1$. It follows that the composition
\[c_{(i+1,n+1)}\circ\dots c_{(2,n+1)}\circ c_{(1,n+1)}\colon C_{n+1}\hookrightarrow\bigcup_{0<j\leq i+1}d^j[\Delta^{n}]\]
is contained in $A$. In particular, since $k_{n+1}$ is the composition of all $c_{(i,n+1)}$ for $0<i\leq n+1$, the map 
$k_{n+1}$ is contained in $A$. This finishes the induction.

Thus, on the one hand, $\{c_n\mid n\geq 2\}\subseteq A$ implies $\{k_n\mid n\geq 3\}\subseteq A$, and
$k_2=\mathrm{id}_{C_2}$ is contained in $A$ trivially. On the other hand, in order to prove that 
$\{h_0^n\mid n\geq 2\}\subseteq A$ implies $\{k_n\mid n\geq 2\}\subseteq A$ and hence $\{c_n\mid n\geq 2\}\subseteq A$, 
assume that $A$ contains all left horn inclusions. Since $C_{2}=\Lambda_0^2$ 
and $h_0^2=c_{2}$, the inclusion $c_{2}$ is contained in $A$.
Suppose $n\geq 2$ and $c_{m}\in A$ for all $2\leq m\leq n$. As we have seen above, this implies $k_{n+1}\in A$. This in 
turn implies $c_{n+1}\in A$, because $c_{n+1}=h_0^{n+1}\circ k_{n+1}$.	
\end{proof}

\begin{corollary}\label{corihicinterchange}
Let $X\in s\mathbf{S}$ be $v$-fibrant. The following two conditions are equivalent.
\begin{enumerate}[label=(\arabic*)]
\item $c_{n}\setminus X$ is an acyclic fibration in $\mathbf{S}$ for all $n\geq 2$.
\item $h_0^n\setminus X$ is an acyclic fibration in $\mathbf{S}$ for all $n\geq 2$.
\end{enumerate}
\end{corollary}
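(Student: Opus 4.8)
The plan is to read off the equivalence directly from Lemma~\ref{ihicinterchange}, applied to the class
\[ A:=\{\,u\in\mathbf{S}\mid u\setminus X\text{ is an acyclic fibration in }(\mathbf{S},\mathrm{Kan})\,\}. \]
With this choice, condition~(1) is precisely the assertion $\{c_n\mid n\geq 2\}\subseteq A$ and condition~(2) is the assertion $\{h_0^n\mid n\geq 2\}\subseteq A$. Hence the corollary follows word for word from Lemma~\ref{ihicinterchange} as soon as I have checked that $A$ is saturated and that $A$ has the right cancellation property for monomorphisms.

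For saturation I would use that $\blank\setminus X\colon\mathbf{S}^{op}\rightarrow\mathbf{S}$ carries colimits of simplicial sets to limits. This is immediate from the defining adjunction $A\bBox\blank\dashv A\setminus\blank$ together with the bicontinuity of the box product, giving the natural isomorphism $(\operatorname{colim}_i A_i)\setminus X\cong\lim_i(A_i\setminus X)$. In particular $\blank\setminus X$ sends pushouts to pullbacks, transfinite composites to limits of the corresponding towers, and preserves retracts. Since the acyclic fibrations of $(\mathbf{S},\mathrm{Kan})$ are the right class of a weak factorization system, they are stable under pullback, under limits of towers, and under retracts; therefore $A$ is closed under cobase change, transfinite composition and retracts, i.e.\ saturated.

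The substantial point is right cancellation, and this is where $v$-fibrancy of $X$ is used. I would first record that $u\setminus X$ is a Kan fibration for every monomorphism $u$: by Proposition~\ref{boxdividadj} one has $h_i^m\pitchfork(u\setminus X)\Leftrightarrow (u\bBox\sprime h_i^m)\pitchfork(X\rightarrow\ast)$, and $u\bBox\sprime h_i^m$ lies in the saturated class generated by $\mathcal{J}_v$, being the pushout-product of a monomorphism with a horn inclusion, against which $X\rightarrow\ast$ lifts because $X$ is $v$-fibrant. Now let $u,v$ be monomorphisms with $u,vu\in A$. Since $\blank\setminus X$ is contravariant, $(vu)\setminus X=(u\setminus X)\circ(v\setminus X)$, and both this composite and $u\setminus X$ are acyclic fibrations; by three-for-two for weak homotopy equivalences $v\setminus X$ is a weak homotopy equivalence, and being also a Kan fibration it is an acyclic fibration, so $v\in A$. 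With both hypotheses verified, Lemma~\ref{ihicinterchange} gives $\{c_n\mid n\geq 2\}\subseteq A\Leftrightarrow\{h_0^n\mid n\geq 2\}\subseteq A$, which is exactly the desired equivalence of (1) and (2).

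The step I expect to be the genuine obstacle is the claim that $u\setminus X$ is a Kan fibration for an arbitrary monomorphism $u$; everything else is formal. It rests on combining the pushout-product adjunction of Proposition~\ref{boxdividadj} with the fact that $u\bBox\sprime h_i^m$ is a $v$-acyclic cofibration, and it is exactly here that the hypothesis that $X$ be $v$-fibrant enters.
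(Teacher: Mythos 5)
Your proof is correct and follows essentially the same route as the paper: both apply Lemma~\ref{ihicinterchange} to the class $A$ of maps sent to acyclic fibrations by $\blank\setminus X$, checking saturation and right cancellation for monomorphisms. The paper justifies saturation via Proposition~\ref{boxdividadj} and saturation of the monomorphisms rather than via continuity of $\blank\setminus X$, and leaves the right-cancellation step (your observation that $u\setminus X$ is a Kan fibration for every monomorphism $u$ by $v$-fibrancy) implicit, but these are only differences in the level of detail, not in the argument.
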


\begin{proof}
Let $X$ be $v$-fibrant. The class
\[A:=\{f\in\mathbf{S}\mid f\text{ is a monomorphism and }f\setminus X\text{ is an acyclic fibration}\}\]
has the right cancellation property for monomorphisms. It is saturated by Proposition \ref{boxdividadj} and 
the fact that the class of monomorphisms in $\mathbf{S}$ is saturated. Therefore, the two statements are equivalent by 
Lemma \ref{ihicinterchange}.
\end{proof}

Next, we show that the canonical embeddings $\Delta^n\rightarrow I\Delta^n$ into the nerve $I\Delta^n:=N(I[n])$ of the 
free groupoid generated by $[n]$ are cellular cofibrations with respect to 
the set $\{h_0^n\mid n\geq 2\}$. By that we mean that each map $\Delta^n\rightarrow I\Delta^n$ 
is the transfinite composition of pushouts of coproducts of such horn inclusions. In the case $n=1$, this was noted 
by Rezk in \cite[11]{rezk} using a combinatorial description of the simplices in $I\Delta^1$.
Although the combinatorics of the $I\Delta^n$ for $n\geq 2$ is considerably more complicated, we 
can make use of a suitable alternative description of the nerve of a groupoid, given for example in 
\cite{moerdijkgroupcompletion}.
Therefore, consider diagrams in $\mathbf{S}(C_m,I\Delta^n)$ depicted as follows.
\begin{align}\label{diagconicalnerve}
\begin{gathered}
\xymatrix{
& & c_0\ar@/_/[dll]_{h_1}\ar@/_/[dl]^{h_2}\ar@/^/[dr]^{h_m} &  \\ 
c_1 & c_2 & \dots & c_m
}
\end{gathered}
\end{align}
Every such conical diagram is uniquely determined by its sequence $(c_i\mid 0\leq i\leq m)$ of objects in $[n]$.
Indeed, we obtain a bijection
\[(I\Delta^n)_m\cong\mathbf{S}(C_m,I\Delta^n)\]
which sends an $m$-simplex $c_0\xrightarrow{f_1}c_1\xrightarrow{f_2}\dots\xrightarrow{f_m}c_m$ to the diagram
$(c_0\xrightarrow{f_i\dots f_1}c_i)_{i\leq m}$. Under this identification, the boundaries and degeneracies of an
$m$-simplex in $I\Delta^n$ represented as a cone $(c_0\xrightarrow{h_i} c_i)_{i\leq m}\in\mathbf{S}(C_m,I\Delta^n)$ are 
given as follows.
\begin{align*}
d_j((c_0\xrightarrow{h_i} c_i)_{i\leq m}) & =
\begin{cases}
   (c_0\xrightarrow{h_i} c_i)_{i\leq m,i\not=j}        & \text{if } j > 0 \\
   (c_1\xrightarrow{h_i\dots h_2} c_i)_{1< i\leq m}        & \text{if } j = 0
  \end{cases}\\
s_j((c_0\xrightarrow{h_i} c_i)_{i\leq m}) & =
\begin{cases}
   (h_1,\dots,h_j,h_j,h_{j+1},\dots,h_m)        & \text{if } j > 0 \\
   (1_{c_0},h_1,\dots,h_m)        & \text{if } j = 0
  \end{cases}
\end{align*}

\begin{proposition}\label{freegrpdinv}
The canonical inclusions $\{\Delta^n\rightarrow I\Delta^n\mid n\geq 1\}$ are cellular cofibrations with respect to the 
set $\{h_0^n\mid n\geq 2\}$.
\end{proposition}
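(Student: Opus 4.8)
The plan is to work entirely inside the combinatorial model of $I\Delta^n$ furnished by the cone description above, and to exhibit the inclusion $\Delta^n\hookrightarrow I\Delta^n$ as a countable composite of pushouts of coproducts of $0$-horns, filtered by dimension. Under the bijection $(I\Delta^n)_m\cong\mathbf{S}(C_m,I\Delta^n)$, an $m$-simplex is a sequence $(c_0,c_1,\dots,c_m)$ of objects of $[n]$; by the displayed degeneracy formulas it is nondegenerate precisely when $c_i\neq c_{i+1}$ for all $i$, and it lies in $\Delta^n=N([n])$ precisely when $c_0<c_1<\dots<c_m$. The feature I would exploit is the displayed face formula $d_0(c_0,\dots,c_m)=(c_1,\dots,c_m)$: the $0$-th face is the unique face discarding the pinnacle $c_0$, which is exactly the face a pushout along $h_0^m$ creates afresh.

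Accordingly, I would first set up a matching on the nondegenerate simplices of $I\Delta^n$ that do not lie in $\Delta^n$. Call such a simplex a \emph{top} if its pinnacle is $0$ and a \emph{bottom} otherwise; match a top $\sigma=(0,v_1,\dots,v_m)$ with its $0$-face $d_0\sigma=(v_1,\dots,v_m)$, equivalently match a bottom $\tau$ with the top $(0,\tau)$ obtained by prepending $0$. The one genuinely combinatorial point is that this is a bijection between tops and bottoms covering every non-$\Delta^n$ simplex exactly once. It rests on the observation that for a nondegenerate pinnacle-$0$ simplex $(0,v_1,\dots,v_m)$ one has $v_1\geq 1>0$ automatically, so it is strictly increasing if and only if $(v_1,\dots,v_m)$ is; hence $d_0$ preserves the property of being non-strictly-increasing, and prepending $0$ to a non-$\Delta^n$ bottom again yields a non-$\Delta^n$ top. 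Injectivity and surjectivity then follow from $d_0$ and prepending $0$ being mutually inverse on these sets.

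With the matching in hand I would define a filtration $\Delta^n=Y_1\subseteq Y_2\subseteq\cdots$ in which $Y_k$ adjoins to $Y_{k-1}$ every top of dimension $k$ together with its matched bottom $d_0\sigma$ (of dimension $k-1$). Since the shortest non-strictly-increasing pinnacle-$0$ sequence has length $3$, the tops have dimension $\geq 2$ and the filtration starts at $k=2$, so only horns $h_0^m$ with $m\geq 2$ occur. For a fixed top $\sigma=(0,v_1,\dots,v_k)$ the faces $d_j\sigma$ with $j\geq 1$ retain the pinnacle $0$ and have dimension $k-1$, so each is either strictly increasing (hence in $\Delta^n$), a lower-dimensional top (adjoined at stage $k-1$), or degenerate with nondegenerate core of dimension $<k$; in every case it already lies in $Y_{k-1}$. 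The matched faces $d_0\sigma$, by contrast, have pinnacle $\neq 0$ and so never occur among the $d_{j\geq 1}$-faces of any top, which guarantees that no bottom is required before it is produced and that none is attached twice. Hence the attaching map of each top factors through the horn $\Lambda_0^k\hookrightarrow Y_{k-1}$, distinct tops contribute disjoint cells, and $Y_{k-1}\hookrightarrow Y_k$ is a single pushout of the coproduct $\bigsqcup h_0^k$ indexed by the dimension-$k$ tops. Taking the union over $k$ recovers $I\Delta^n$ and exhibits $\Delta^n\hookrightarrow I\Delta^n$ as the desired cellular map.

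The main obstacle is entirely the combinatorial bookkeeping of the previous two paragraphs rather than any homotopical input: confirming that the prepend-$0$ matching is perfect and $d_0$-compatible (the strictly-increasing invariance), that each $Y_k$ is genuinely a simplicial subset closed under all faces and degeneracies (in particular that the faces of the newly adjoined bottoms remain in $Y_k$), and that the required faces of each top are available one stage earlier while its created $d_0$-face is fresh. Once these are verified the pushout presentation is formal. A final remark worth including is the base case $n=1$, where the tops are exactly the alternating sequences $(0,1,0,\dots)$ and the construction recovers Rezk's description of $I\Delta^1$ in \cite[11]{rezk}.
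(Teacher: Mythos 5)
Your proposal is correct and is essentially the paper's own proof: both filter $I\Delta^n$ by attaching, at stage $k$, a $0$-horn $h_0^k$ for each nondegenerate non-monotone sequence beginning with $0$ (your ``tops'', the paper's sets $T_k$), with the faces $d_j$ for $j\geq 1$ landing in the previous stage and the $d_0$-face being the freshly created cell. Your explicit top/bottom matching is just a slightly more systematic packaging of the paper's final verification that every simplex is eventually reached and nothing is attached twice.
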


\begin{proof}
Let $n\geq 1$. The subobject $\Delta^n\subset I\Delta^n$ consists exactly of those conical diagrams 
(\ref{diagconicalnerve}) such that the sequence $(c_i\mid 0\leq i\leq m)$ is monotone in the linear order $[n]$. We 
will add the non-degenerate simplices in its complement step by step via recursion. Therefore,
we define a filtration $\{F^{(m)}\mid m\geq 1\}$ of $I\Delta^n$ with $F^{(1)}:=\Delta^n$ such that the 
inclusions $F^{(m)}\rightarrow F^{(m+1)}$ are cellular for the left horn inclusions.

Note that every 1-simplex in $I\Delta^n$ of the form $0 \rightarrow c_1$ for $c_1\in[n]$ is contained in $F^{(1)}$. 
Thus, we may assume that $F^{(m)}\subseteq I\Delta^{n}$ has been defined such that every diagram of the form 
$(0\rightarrow c_i\mid 1\leq i\leq m)$ is contained in $F^{(m)}$.

Then every non-degenerate non-monotone sequence $(c_i\mid 0\leq i\leq m+1)$ in $[n]$ with $c_0=0$ 
determines a unique map $\Lambda^{m+1}_0\rightarrow I\Delta^n$. This map factors through the subobject
$F^{(m)}$ since, for every $0<i\leq m+1$, the boundary
\[d_i((c_i\mid 0\leq i\leq m+1))=(0,\dots,\hat{c_i},\dots, c_{m+1})\]
is contained in $F^{(m)}$ by assumption.

Let $T_{m+1}$ be the set of all non-degenerate
non-monotone sequences $(c_i\mid 0\leq i\leq m+1)$ with $c_0=0$, and let $F^{(m+1)}$ be the pushout
\[\xymatrix{
\coprod_{c\in T_{m+1}}\Lambda^{m+1}_0\ar@{^(->}[r]^(.55){\coprod h^{m+1}_0}\ar[d]_{c}\ar@{}[dr]|(.7){\pos} & \coprod_{c\in T_{m+1}}\Delta ^{m+1}\ar[d] \\
F^{(m)}\ar@{^(->}[r] & F^{(m+1)}.
}\]
We obtain a natural map $F^{(m+1)}\rightarrow I\Delta^n$ via the inclusion $F^{(m)}\subseteq I\Delta^n$ on the one hand, 
and the unique maps $c\colon\Delta^{m+1}\rightarrow I\Delta^n$ extending 
$c\colon\Lambda^{m+1}_0\rightarrow I\Delta^n$ for $c\in T_{m+1}$ on the other hand.
This map $F^{(m+1)}\rightarrow I\Delta^n$ is a monomorphism as can be verified by case distinction. The 
subobject $F^{(m+1)}\subseteq I\Delta^n$ contains every sequence of the form $(0\rightarrow c_i\mid 1\leq i\leq m+1)$  
in $[n]$ by construction.

Every inclusion $F^{(m)}\rightarrow F^{(m+1)}$ is cellular for the set
$\{h_0^n\mid n\geq 2\}$, and so is the composite inclusion $\Delta^1\rightarrow\bigcup_{1\leq m}F^{(m)}$. We are 
left to show that every simplex in $I\Delta^n$ is contained in some $F^{(m)}$.

Therefore, let $(c_i\mid 0\leq i\leq m)$ be a non-degenerate $m$-simplex in $I\Delta^n$. If the sequence is monotone 
or $c_0=0$, it is contained in $F^{(m)}$. Otherwise, $c_0>0$ and the sequence $(0,c_0,\dots,c_m)$ is 
contained in the set $T_{m+1}$. It follows that $(0,c_0,\dots,c_m)$ is an $(m+1)$-simplex in $F^{(m+1)}$, 
and so $d_0((0,c_0,\dots,c_m))=(c_0,\dots,c_m)$ is contained in $F^{(m+1)}$ as well. This finishes the proof.
\end{proof}

\begin{corollary}\label{conetospine}
Let $A$ be a saturated class of morphisms in $\mathbf{S}$ with 3-for-2 for monomorphisms (i.e.\ with both the left and 
right cancellation property for monomorphisms). Then $B=\{c_n\mid n\geq 2\}\subseteq A$ implies
$S=\{\mathrm{sp}_n\mid n\geq 2\}\subseteq A$.
\end{corollary}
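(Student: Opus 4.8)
The plan is to push the whole question into $\mathbf{S}$ and to climb the short ladder $c_2 \rightsquigarrow I[c_2] \rightsquigarrow I[\mathrm{sp}_2] \rightsquigarrow \mathrm{sp}_2$ inside the free groupoid $I\Delta^2$, and then to obtain the higher spine inclusions from the case $n=2$ by a dimension filtration. The only formal closure properties I will use are that a saturated class is closed under pushouts, transfinite composites, retracts and hence isomorphisms, together with the two cancellation properties for monomorphisms assumed of $A$. The two substantive inputs are \cref{ihicinterchange}, which lets me trade the cone inclusions for the outer horn inclusions $h_0^n$, and \cref{freegrpdinv}, which presents $\Delta^n \to I\Delta^n$ as cellular for those horns.

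First I would dispose of the pivotal case $n=2$. From $B \subseteq A$ and \cref{ihicinterchange} (right cancellation is at hand) I get $\{h_0^n \mid n\ge 2\} \subseteq A$, and then \cref{freegrpdinv} together with saturation gives that the localization $\eta\colon \Delta^2 \to I\Delta^2$ lies in $A$. I now compare the cone and the spine with their images in the groupoid. Let $I[c_2]$ and $I[\mathrm{sp}_2]$ be the inclusions into $I\Delta^2$ whose domains are obtained from $C_2 = \Lambda^2_0$ and $\mathrm{Sp}_2 = \Lambda^2_1$ by formally inverting the two generating edges; each domain is a wedge of two copies of the free-living isomorphism $I\Delta^1$ glued at the shared vertex, so the comparison legs $C_2 \to \mathrm{dom}(I[c_2])$ and $\mathrm{Sp}_2 \to \mathrm{dom}(I[\mathrm{sp}_2])$ are wedges of two copies of $\Delta^1 \to I\Delta^1$ and hence lie in $A$ by \cref{freegrpdinv} and saturation. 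Two applications of $3$-for-$2$ across the resulting squares over $\eta$ give $c_2 \in A \Leftrightarrow I[c_2] \in A$ and $\mathrm{sp}_2 \in A \Leftrightarrow I[\mathrm{sp}_2] \in A$. The decisive point is that the two domains are abstractly isomorphic and that the transposition $0\leftrightarrow 1$, a nontrivial automorphism of $I[2]$, restricts to an isomorphism between them compatibly with the inclusions into $I\Delta^2$; since $A$ is closed under isomorphism this yields $I[c_2]\in A \Leftrightarrow I[\mathrm{sp}_2]\in A$. Chaining the three equivalences with $c_2 \in B \subseteq A$ proves $\mathrm{sp}_2 \in A$.

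For $n > 2$ I would argue by a factorization and an induction in which the cone/horn data now play the role of fillers. For $n \ge 3$ every edge $i\to i+1$ avoids some vertex $j \ne 0$, so $\mathrm{Sp}_n \subseteq \Lambda^n_0$ and the spine inclusion factors as $\mathrm{Sp}_n \hookrightarrow \Lambda^n_0 \xrightarrow{h_0^n} \Delta^n$ with $h_0^n \in A$; it therefore suffices to place $\mathrm{Sp}_n \hookrightarrow \Lambda^n_0$ in $A$. This last inclusion I would build by a finite filtration that attaches the missing non-degenerate faces of $\Lambda^n_0$ as pushouts of lower-dimensional spine (and horn) inclusions, all of which are in $A$ by the inductive hypothesis, the induction bottoming out at the case $n=2$ just settled. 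Notice that this factorization is available precisely for $n \ge 3$, since $\mathrm{Sp}_2 \not\subseteq \Lambda^2_0$; this is exactly why $n=2$ must be handled separately through the groupoid.

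I expect the real obstacle to be the $n=2$ step, and within it the careful choice of the domains of $I[c_2]$ and $I[\mathrm{sp}_2]$: they must simultaneously be abstractly isomorphic, be interchanged by an automorphism of $I[2]$, and admit comparison legs into $A$ so that $3$-for-$2$ transfers membership across $\eta$. Everything else is formal, and rests on the single geometric observation that, while the poset $[2]$ is rigid, its free groupoid acquires the symmetric group $S_3$ of automorphisms, one transposition of which turns a span into a composable pair. The $n>2$ filtration is routine but combinatorially tedious, and I would keep its bookkeeping minimal by always factoring through $\Lambda^n_0$.
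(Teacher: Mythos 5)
Your argument is correct, and for the pivotal case $n=2$ it is in substance the paper's own proof: the paper runs exactly your ladder $c_2\Rightarrow I[c_2]\Rightarrow I[\mathrm{sp}_2]\Rightarrow\mathrm{sp}_2$, transferring membership across $\Delta^2\to I\Delta^2$ (in $A$ by Lemma~\ref{ihicinterchange} and Proposition~\ref{freegrpdinv}) and across the comparison legs (pushouts of $\Delta^1\to I\Delta^1$) via the two cancellation properties. The only cosmetic difference there is the automorphism of $I[2]$: the paper takes the $3$-cycle $0\mapsto 2$, $1\mapsto 0$, $2\mapsto 1$, you take the transposition $0\leftrightarrow 1$; both carry $I[\mathrm{Sp}_2]$ onto $I[C_2]$, since $I[2]$ has exactly one morphism between any two objects and these subobjects are determined by which pairs of vertices their two legs span. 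Where you genuinely diverge is $n>2$. The paper factors both $\mathrm{sp}_n$ and $c_n$ through $(\mathrm{SpC})_n=\bigcup_{0<i<n}\Delta^{\{0,i,i+1\}}$: the legs $\mathrm{Sp}_n\to(\mathrm{SpC})_n$ and $C_n\to(\mathrm{SpC})_n$ are finite composites of pushouts of $\mathrm{sp}_2$ and $c_2$ respectively, right cancellation against $c_n\in A$ puts $(\mathrm{SpC})_n\to\Delta^n$ in $A$, and composing finishes. Your factorization through $\Lambda^n_0$ also works, but it obliges you to fill the entire horn from the spine, and the missing cells cannot all be attached along spines: the triangles $\Delta^{\{0,i,j\}}$ with $j>i+1$ must be glued along their left cones, i.e.\ as pushouts of $c_2=h_0^2$, and the $k$-dimensional faces through the vertex $0$ for $3\leq k\leq n-1$ as pushouts of $h_0^k$ --- so your parenthetical ``(and horn)'' is doing real work, and those horns come from Lemma~\ref{ihicinterchange} rather than from your induction on spines. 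All of this is available and your sketch can be completed, but the paper's intermediate object $(\mathrm{SpC})_n$ is leaner: it needs only $2$-dimensional cells and a single application of right cancellation, with no filtration of $\Lambda^n_0$ at all.
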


\begin{proof}
We show the cases $n=2$ and $n>2$ separately.

For $n=2$, we denote the non-degenerate 2-cell in $\Delta^2$ by $[0]\xrightarrow{f}[1]\xrightarrow{g}[2]$. Let
$I[\mathrm{Sp}_2]\subset I\Delta^2$ and $I[C_2]\subset I\Delta^2$ be the respective pushouts
$I\Delta^1\cup_{\Delta^0} I\Delta^1\subset I\Delta^2$ along the obvious pairs of boundaries.

Then the 
assignment $0\mapsto 2$, $1\mapsto 0$, $2\mapsto 1$, $f\mapsto (gf)^{-1}$, $g\mapsto f$ generates an automorphism
$a\colon I\Delta^2\rightarrow I\Delta^2$ with $a[I[\mathrm{Sp}_2]]=I[C_2]$. We obtain a diagram of inclusions as follows.
\[\xymatrix{
\mathrm{Sp}_2\ar[d]^{\mathrm{sp}_2}\ar[r] & I[\mathrm{Sp}_2]\ar[r]^{a}_{\cong}\ar[d]^{I[\mathrm{sp}_2]} & I[C_2]\ar[d]^{I[c_2]} & C_2\ar[d]^{c_2}\ar[l] \\
\Delta^2\ar[r] & I\Delta^2\ar[r]^{a}_{\cong} & I\Delta^2 & \Delta^2\ar[l]
}\]
The bottom inclusions are contained in $A$ by Lemma~\ref{ihicinterchange} and Proposition~\ref{freegrpdinv}. The top 
inclusions are pushouts of the inclusion $\Delta^1\rightarrow I\Delta^1$ and it follows via saturation that they are 
contained in $A$ as well. The map $c_2$ is contained in $A$ by assumption, and hence it 
follows by left and right cancellation that $\mathrm{sp}_2\in A$.

For $n>2$, let $\Delta^{\{i,j,k\}}\subset\Delta^n$ denote the subobject generated by the 2-cell
$[i]\rightarrow [j]\rightarrow [k]$ whenever $i\leq j\leq k$. Let $(\mathrm{SpC})_n\subset \Delta^n$ be the 
subobject given by the union
\[(\mathrm{SpC})_n:=\bigcup_{0<i<n}\Delta^{\{0,i,i+1\}}.\]
It contains all essential and all initial edges of $\Delta^n$, so both the spine and left cone inclusions 
factor through $(\mathrm{SpC})_n$. 
\begin{align}\label{CZfac}
\begin{gathered}
\xymatrix{
\mathrm{Sp}_n\ar@/_/[ddr]_{\mathrm{sp}_n}\ar[dr] & & C_n\ar@/^/[ddl]^{c_n}\ar[dl] \\
 & (\mathrm{SpC})_n\ar[d] & \\
 & \Delta^n & 
}
\end{gathered}
\end{align}
The inclusion $\mathrm{Sp}_n\rightarrow(\mathrm{SpC})_n$ is a finite composition of pushouts of
$\mathrm{sp}_2\colon \mathrm{Sp}_2\rightarrow\Delta^2$, and, likewise, the inclusion $C_n\rightarrow(\mathrm{SpC})_n$ is 
a finite composition of pushouts of $c_2\colon C_2\rightarrow\Delta^2$. It follows that both these inclusions are 
contained in $A$, and so does $c_n\colon C_n\rightarrow\Delta^n$ by assumption. It follows that
$(\mathrm{SpC})_n\rightarrow\Delta^n$ lies in $A$ by right cancellation, and hence so does the composition
$\mathrm{Sp}_n\rightarrow(\mathrm{SpC})_n\rightarrow\Delta^n$.
\end{proof}

\begin{theorem}\label{B=BS}
Every Bousfield-Segal space is a Segal space. In particular, the model structures $(s\mathbf{S},B)$ and
$\mathcal{L}_B(s\mathbf{S},S)$ coincide.
\end{theorem}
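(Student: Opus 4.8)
The plan is to read off both assertions from Corollary~\ref{conetospine} together with standard facts about left Bousfield localization, so the work concentrates on checking that the right class of maps satisfies the hypotheses of that corollary. For the first assertion, fix a Bousfield-Segal space $X$ and reuse the class
\[A:=\{f\in\mathbf{S}\mid f\text{ is a monomorphism and }f\setminus X\text{ is an acyclic fibration}\}\]
from the proof of Corollary~\ref{corihicinterchange}, which is already known to be saturated. What I would add is that $A$ satisfies the \emph{full} 3-for-2 property for monomorphisms, not merely right cancellation. The key point is that, since $X$ is $v$-fibrant, the functor $\blank\setminus X$ sends every monomorphism $f$ to a Kan fibration: by Proposition~\ref{boxdividadj} applied to $X\to 1$ one has $h_i^m\pitchfork(f\setminus X)$ whenever $f\bBox\sprime h_i^m\pitchfork(X\to 1)$, and $f\bBox\sprime h_i^m$ is an acyclic $v$-cofibration because $\bBox\sprime$ is a left Quillen bifunctor whose generating acyclic cofibrations are exactly the maps $\mathcal{J}_v$. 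Hence $f\in A$ if and only if $f\setminus X$ is a weak homotopy equivalence, and the 2-out-of-3 property of weak equivalences in $\mathbf{S}$ transfers along the (contravariant) functor $\blank\setminus X$ to give both cancellation properties for $A$.

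With $A$ saturated and satisfying 3-for-2, Corollary~\ref{conetospine} applies directly. Because $X$ is a Bousfield-Segal space, its Bousfield maps $c_n\setminus X$ are acyclic fibrations for all $n\geq 2$, i.e.\ $\{c_n\mid n\geq 2\}\subseteq A$; the corollary then yields $\{\mathrm{sp}_n\mid n\geq 2\}\subseteq A$, which says precisely that the Segal maps $\xi_n=\mathrm{sp}_n\setminus X$ are acyclic fibrations for all $n\geq 2$. Thus $X$ is a Segal space, proving the first assertion. For the equality of model structures I would argue at the level of fibrant objects. Both $(s\mathbf{S},B)$ and $\mathcal{L}_B(s\mathbf{S},S)$ are left Bousfield localizations of the left proper cellular model structure $(s\mathbf{S},R_v)$: the former by definition, and the latter because iterating localizations is localization at the union, $\mathcal{L}_B(\mathcal{L}_S(s\mathbf{S},R_v))=\mathcal{L}_{S\cup B}(s\mathbf{S},R_v)$. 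The fibrant objects of $(s\mathbf{S},B)$ are the Bousfield-Segal spaces, while those of $\mathcal{L}_{S\cup B}(s\mathbf{S},R_v)$ are the $v$-fibrant objects that are simultaneously $S$-local and $B$-local, i.e.\ the Segal spaces that are also Bousfield-Segal spaces. By the first assertion every Bousfield-Segal space is already a Segal space, so these two classes of fibrant objects coincide.

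To finish I would invoke the principle that a left Bousfield localization of a fixed model structure is determined by its class of local objects: two such localizations with the same fibrant objects have the same local equivalences, hence the same weak equivalences, the same (unchanged) cofibrations, and therefore the same fibrations, forcing $(s\mathbf{S},B)=\mathcal{L}_B(s\mathbf{S},S)$. I expect the only genuinely new input beyond the already-established Corollary~\ref{conetospine} to be the upgrade from right cancellation to the left cancellation half of 3-for-2 for $A$; this is exactly where $v$-fibrancy of $X$ is essential, since it promotes each $f\setminus X$ from a mere weak equivalence to a Kan fibration and hence to an acyclic fibration. Everything else — the reduction to Corollary~\ref{conetospine}, the identification of iterated localization with localization at the union, and the determination of a localization by its fibrant objects — is routine bookkeeping.
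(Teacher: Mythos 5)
Your proposal is correct and follows essentially the same route as the paper: the same class $A$, the same verification that it is saturated and satisfies 3-for-2 for monomorphisms (using that $\blank\setminus X$ sends monomorphisms to Kan fibrations because $X$ is $v$-fibrant), the same appeal to Corollary~\ref{conetospine}, and the same conclusion that two left Bousfield localizations of $(s\mathbf{S},R_v)$ with identical fibrant objects coincide. You merely spell out a few steps the paper leaves implicit, such as the lifting argument behind $f\setminus X$ being a fibration and the identification of the iterated localization with $\mathcal{L}_{S\cup B}(s\mathbf{S},R_v)$.
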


\begin{proof}
Let $X$ be a Bousfield-Segal space and consider the class
\[A:=\{f\in\mathbf{S}\mid f\text{ is a monomorphism and }f\setminus X\text{ is an acyclic fibration}\}.\]
The class $A$ is saturated, because the class of acyclic fibrations in $(\mathbf{S},\mathrm{Kan})$ is closed under 
pullbacks, retracts and sequential limits. It satisfies 3-for-2 for monomorphisms, because $\blank\setminus X$ maps all 
monomorphisms to fibrations and the class of weak equivalences in $(\mathbf{S},\mathrm{Kan})$ satisfies 3-for-2. We have 
$B\subseteq A$ by assumption and hence $S\subseteq A$ by Corollary~\ref{conetospine}. Therefore, $X$ is a Segal space.

In other words, every fibrant object in $(s\mathbf{S},B)$ is also fibrant in $\mathcal{L}_B(s\mathbf{S},S)$. 
This implies that the left Bousfield localizations $(s\mathbf{S},B)$ and $\mathcal{L}_B(s\mathbf{S},S)$ have the same 
class of fibrant objects, and hence coincide.
\end{proof}

\begin{remark}
One also can show Theorem~\ref{B=BS} in a more algebraic fashion. The algebraic laws of the fraction 
operation described in Lemma~\ref{fractionlaws} hold in a homotopy coherent manner and can be used to construct a 
homotopy inverse to the Segal map $\xi_2$ of any given Bousfield-Segal space. Acyclicity of the Segal maps $\xi_n$ for 
$n>2$ can then be proven as in Corollary~\ref{conetospine}, or by induction similar to Lemma~\ref{ihicinterchange}.
\end{remark}

Theorem~\ref{B=BS} implies that the constructions for Segal spaces from \cite{rezk} apply to the class of
Bousfield-Segal spaces. For example, every Bousfield-Segal space $X$ comes equipped with a homotopy category
$\mathrm{Ho}(X)$ as constructed in \cite[5.5]{rezk}. This induces a notion of Dwyer-Kan equivalences (\cite[7.4]{rezk}) 
between Bousfield-Segal spaces $X$ and $Y$, which are exactly those maps between $X$ and $Y$ to be inverted in the model 
structure for complete Segal spaces (\cite[Theorem 7.7]{rezk}).

Recall the groupoid $\mathrm{Ho}_B(X)$ associated to $X$ in Corollary~\ref{HoBSisgrpd}.

\begin{corollary}\label{B=BSspaces}
For any Bousfield-Segal space $X$, the categories $\mathrm{Ho}(X)$ and $\mathrm{Ho}_B(X)$ coincide. In particular,
$\mathrm{Ho}(X)$ is a groupoid.
\end{corollary}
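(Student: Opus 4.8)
The plan is to regard $\mathrm{Ho}(X)$ and $\mathrm{Ho}_B(X)$ as two category structures on one and the same underlying graph and to check only that their compositions agree. Indeed, both have the vertex set $X_{00}$ as objects, and both assign to a pair $(x,y)$ the hom-set $\pi_0 X_1(x,y)$: for $\mathrm{Ho}_B(X)$ this is the definition in Corollary~\ref{HoBSisgrpd}, and for $\mathrm{Ho}(X)$ it is Rezk's description of the homotopy category of a Segal space \cite[5.5]{rezk}, which applies since $X$ is a Segal space by Theorem~\ref{B=BS}. Once the two compositions are shown to coincide, the units agree automatically (a category's identities are its unique two-sided units), so the categories are equal; groupoidality of $\mathrm{Ho}(X)$ is then inherited from $\mathrm{Ho}_B(X)$ via Corollary~\ref{HoBSisgrpd}.

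The only tool needed is the basic fact about composition in a Segal space: since the Segal map $\xi_2$ is an acyclic fibration, every $2$-simplex $\theta\in X_2$ witnesses the relation $[d_0\theta]\circ[d_2\theta]=[d_1\theta]$ in $\mathrm{Ho}(X)$. The idea is to feed the two cone-fillers defining the Bousfield composite into this relation. Write $f\colon x\to y$, $g\colon y\to z$, and $\bar f:=1_x/f$, so that the $\mathrm{Ho}_B$-composite is $[g/(1_x/f)]=[g/\bar f]$.

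Unwinding the fraction operation, the $2$-simplex $\mu_2$ defining $1_x/f$ has faces $(d_2,d_1,d_0)=(f,\,1_x,\,\bar f)$, and the one defining $g/\bar f$ has faces $(\bar f,\,g,\,g/\bar f)$. Applying the witness relation to each yields $[\bar f]\circ[f]=[1_x]$ and $[g/\bar f]\circ[\bar f]=[g]$ in $\mathrm{Ho}(X)$. A single computation in the category $\mathrm{Ho}(X)$, using associativity and the unit law,
\[ [g]\circ[f]=\bigl([g/\bar f]\circ[\bar f]\bigr)\circ[f]=[g/\bar f]\circ\bigl([\bar f]\circ[f]\bigr)=[g/\bar f]\circ[1_x]=[g/\bar f], \]
then identifies the Segal composite $[g]\circ[f]$ with the Bousfield composite $[g/\bar f]$, as desired.

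The main obstacle is not conceptual but a matter of bookkeeping faces. In the Segal (spine) picture the long edge $d_1$ of a filler carries the \emph{output} composite, whereas in the Bousfield (cone) picture---where both non-output edges emanate from the initial vertex---the face $d_1$ carries an \emph{input} (the numerator) and $d_2$ the other input (the denominator), with the output on $d_0$. One has to track this role-reversal between cone and spine correctly; this is essentially the computation already underlying Lemma~\ref{fractionlaws}, and once it is in place the identification above is purely formal.
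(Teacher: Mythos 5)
Your proof is correct, and it takes a genuinely different route from the paper's. The paper combines the same two $2$-simplices you use, $\mu_2(1_x,f)$ and $\mu_2(g,1_x/f)$, with a Segal filler $\eta_2(f,g)$ into an inner horn $\Lambda_1^3\rightarrow X_{\bullet 0}$, lifts it to a $3$-simplex $L(f,g)$, and then observes that the face $d_1L(f,g)$ and the degenerate simplex $s_0(g\circ_S f)$ lie in the same contractible fibre of $\beta_2$, whence $[g\circ_B f]=[g\circ_S f]$; in effect it builds a single $2$-simplex witnessing the desired identity. You instead stay entirely in dimension $\leq 2$: you extract from each of the two defining $2$-simplices the relation $[d_0\theta]\circ[d_2\theta]=[d_1\theta]$ (valid because $\xi_2$ has contractible fibres), obtaining $[\bar f]\circ[f]=[1_x]$ and $[g/\bar f]\circ[\bar f]=[g]$, and then conclude by associativity and unitality of $\mathrm{Ho}(X)$ --- which are already available from Rezk's construction once Theorem~\ref{B=BS} guarantees $X$ is a Segal space. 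Your face bookkeeping is right (with $\mu_2(f,g)$ having $d_1=f$, $d_2=g$, $d_0=f/g$, consistent with the paper's picture), and the typing $f\colon x\to y$, $\bar f\colon y\to x$, $g/\bar f\colon x\to z$ all checks out. What your approach buys is the elimination of the $3$-dimensional filling argument (and hence of any appeal to $X_{\bullet 0}$ being a quasi-category) at the price of leaning on the categorical axioms of $\mathrm{Ho}(X)$; what the paper's approach buys is a single explicit simplicial witness, closer in spirit to the homotopy-coherent data. Both arguments rest on Theorem~\ref{B=BS}, and your observation that the identities then agree for free is unobjectionable since both categories use $[1_x]=[s_0x]$.
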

\begin{proof}
Let $X$ be a Bousfield-Segal space. The families $\mathrm{Ho}_B(X)$ and $\mathrm{Ho}(X)$ of sets coincide and 
have the same identity, so we have to show that the two corresponding compositions induced by the Bousfield and Segal 
maps coincide, too. Let $\eta_2$ be a section to
$\xi_2\colon X_2\overset{\sim}{\twoheadrightarrow} X_1\times_{X_0}^S X_1$, such that
$\blank\circ_S\blank:=d_1\eta_2$ is a composition for the Segal space $X$. For any two morphisms $f\in X(x,y)$ 
and $g\in X(y,z)$, the associated inner 3-horn
$\eta_2(f,g)\cup\mu_2(1_x,f)\cup\mu_2(g,1_x/f)\colon\Lambda_1^3\rightarrow X_{\bullet 0}$ of the form
\[\xymatrix{
 & y\ar@/^/[dr]^(.4){g}\ar@/^/[rr]^{1_x/f} & & x\ar@{-->}@/^/[dl]^{g\circ_B f:=g/(1_x/f)} \\
 x\ar@/^/[ur]^{f}\ar@{-->}@/_/[rr]_{g\circ_Sf}\ar@{-->}[urrr]_(0.3){1_x} & & z & \\
}\]
has a lift $L(f,g)\colon\Delta^3\rightarrow X_{\bullet 0}$. Both the simplex
\[\xymatrix{
x\ar@/^/[r]^{1_x}\ar@/_1pc/[dr]_{g\circ_S f}\ar@{}[dr]|(.4){\hspace{1pc}d_1L(f,g)} & x\ar@/^/[d]^{g\circ_B f}\\
 & z
}\]
and $s_0(g\circ_S f)$ lie in the contractible fiber $\beta_2^{-1}(g\circ_S f,1_x)$. In particular, $d_1L(f,g)$ and 
$s_0(g\circ_S f)$ lie in the same connected component of $X_2$. By naturality of $d_0$, we obtain
\[[g\circ_B f]=[d_0d_1L(f,g)]=[d_0s_0(g\circ_S f)]=[g\circ_S f]\]
in $\pi_0X_1(x,z)=\mathrm{Ho}(X)(x,z)=\mathrm{Ho}_B(X)(x,z)$.
\end{proof}

\section{Further characterizations}\label{secinvedges}

In this section we prove that $(s\mathbf{S},B)$ can be obtained as the left Bousfield localization of $(s\mathbf{S},S)$ 
at one single map which exhibits Bousfield-Segal spaces as exactly those Segal spaces whose homotopy category is a 
groupoid. In algebraic terms, this means that every homotopy-coherent fraction operation on a space $X_0$ induces a 
unique invertible homotopy-coherent composition operation on $X_0$ and vice versa.
Furthermore, we construct a right adjoint to the inclusion of Bousfield-Segal spaces into Segal spaces, and 
thereby define the \emph{core} of a Segal space.

\begin{proposition}\label{BSChar3}
A Segal space $X$ is a Bousfield-Segal space if and only if the Bousfield map
\[\beta_2=c_{2}\setminus X\colon X_2\rightarrow X_1\times_{X_0}^BX_1\]
is an acyclic fibration. In particular, the model structures $\mathcal{L}_{c_{2}}(s\mathbf{S},S)$ and
$(s\mathbf{S},B)$ coincide. 
\end{proposition}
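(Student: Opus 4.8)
The plan is to keep working with the class
\[A=\{f\in\mathbf{S}\mid f\text{ is a monomorphism and }f\setminus X\text{ is an acyclic fibration}\}\]
from the proof of Theorem~\ref{B=BS}. Since a Segal space is $v$-fibrant, the functor $\blank\setminus X$ sends monomorphisms to fibrations, so $A$ is saturated and satisfies 3-for-2 for monomorphisms just as before, and ``$X$ is a Segal space'' translates into $\mathrm{sp}_n\in A$ for all $n\geq 2$. Under this dictionary ``$X$ is a Bousfield-Segal space'' means $c_n\in A$ for all $n\geq 2$, whereas the hypothesis ``$\beta_2$ is an acyclic fibration'' is exactly $c_2\in A$. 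So the biconditional reduces to the claim that, once $\mathrm{sp}_n\in A$ for all $n$, the single membership $c_2\in A$ already forces $c_n\in A$ for all $n\geq 2$.

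The forward implication is immediate, since a Bousfield-Segal space has $\beta_n=c_n\setminus X$ a fibration and a weak equivalence for every $n\geq2$. For the converse I would reuse the factorization~(\ref{CZfac}) of Corollary~\ref{conetospine}, read in the opposite direction. The case $n=2$ is the hypothesis $c_2\in A$. For $n>2$ I write $\mathrm{sp}_n=p\circ s$ and $c_n=p\circ t$ with common middle map $p\colon(\mathrm{SpC})_n\to\Delta^n$, where $s\colon\mathrm{Sp}_n\to(\mathrm{SpC})_n$ is a finite composition of pushouts of $\mathrm{sp}_2$ and $t\colon C_n\to(\mathrm{SpC})_n$ is a finite composition of pushouts of $c_2$. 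Since $\mathrm{sp}_2\in A$ by the Segal property and $c_2\in A$ by hypothesis, saturation places both $s$ and $t$ in $A$. Right cancellation applied to $\mathrm{sp}_n=p\circ s\in A$ with $s\in A$ then yields $p\in A$, and composing $p$ with $t$ gives $c_n\in A$. Hence all $\beta_n$ are acyclic fibrations and $X$ is a Bousfield-Segal space.

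This argument is a mirror image of Corollary~\ref{conetospine}: there one begins with $c_n\in A$ and extracts $p\in A$ by right-cancelling $t$, then recovers $\mathrm{sp}_n$; here one begins with $\mathrm{sp}_n\in A$ and extracts the same $p\in A$ by right-cancelling $s$, then recovers $c_n$. There is no real obstacle beyond this observation, the essential combinatorial work having been done in Lemma~\ref{ihicinterchange}, Proposition~\ref{freegrpdinv} and Corollary~\ref{conetospine}; the only point to check is that $(\mathrm{SpC})_n$ is genuinely symmetric in the spine and the cone, so that one and the same map $p$ sits underneath both $\mathrm{sp}_n$ and $c_n$.

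For the closing assertion I would compare fibrant objects. The localization $\mathcal{L}_{c_{2}}(s\mathbf{S},S)$ at $p_1^{\ast}(c_{2})$ is a left Bousfield localization of $(s\mathbf{S},S)$ whose fibrant objects are the Segal spaces $X$ for which $c_{2}\setminus X$ is a weak equivalence; by the biconditional just proved these are precisely the Bousfield-Segal spaces. By Theorem~\ref{B=BS} the same class is the class of fibrant objects of $(s\mathbf{S},B)=\mathcal{L}_B(s\mathbf{S},S)$. Since a left Bousfield localization of a fixed model category is determined by its class of fibrant (equivalently, local) objects, the two localizations coincide.
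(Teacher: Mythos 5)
Your proposal is correct and follows essentially the same route as the paper: the heart of both arguments is the common factorization of $\mathrm{sp}_n$ and $c_n$ through $(\mathrm{SpC})_n$ from Diagram~(\ref{CZfac}), combined with right cancellation in a saturated class containing $S\cup\{c_2\}$. The only (cosmetic) difference is that you run the argument in the object-level class $\{f \text{ mono} \mid f\setminus X \text{ acyclic fibration}\}$ for a fixed Segal space $X$ and then deduce the coincidence of model structures by comparing fibrant objects, whereas the paper works directly with the class of monomorphisms $f$ for which $p_1^{\ast}f$ is an acyclic cofibration in $\mathcal{L}_{c_2}(s\mathbf{S},S)$.
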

\begin{proof}
Since both model structures are left Bousfield localizations of the same Reedy structure, we only have to 
compare their fibrant objects. Every Bousfield-Segal space is $c_2$-local by definition and fibrant in
$(s\mathbf{S},S)$ by Theorem~\ref{B=BS}. Thus, one direction is immediate. Vice versa, we have to show that fibrant 
objects in $\mathcal{L}_{c_{2}}(s\mathbf{S},S)$ are $p_1^{\ast}c_{n}$-local for all $n\geq 2$. Therefore, 
consider the class
\[A:=\{f\in\mathbf{S}\mid f\text{ is a monomorphism and }p_1^{\ast}f\text{ is an acyclic cofibration in }\mathcal{L}_{c_{2}}(s\mathbf{S},S)\}.\]
Since the fibrant objects in $\mathcal{L}_{c_{2}}(s\mathbf{S},S)$ are $f$-local for all $f\in A$, we want to show 
that $(c_n\mid n\geq 2)\subseteq A$.
The class $A$ is saturated and has the right cancellation property for monomorphisms. By construction
$S\cup\{c_{2}\}$ is a subset of $A$. Via the common factorization of $c_n$ and $\mathrm{sp}_n$ for $n\geq 2$ given in 
Diagram (\ref{CZfac}), it follows that the left cone inclusions $c_n$ for $n>2$ are contained in $A$ as well (in the 
same way the proof of Corollary~\ref{conetospine} showed that $B\cup\{\mathrm{sp}_2\}\subset A$ implies $\mathrm{sp}_n\in A$ 
for all $n\geq 2$).
\end{proof}

\begin{remark}
One can define both Segal objects and Bousfield-Segal objects in every model category $\mathbb{M}$, via imposing 
acyclicity of associated Segal and Bousfield maps. The functor
$\blank\setminus X\colon\mathbf{S}^{op}\rightarrow\mathbb{M}$ can be defined accordingly for every 
Reedy fibrant simplicial object $X\in\mathbb{M}^{\Delta^{op}}$. Then the proofs of Theorem~\ref{B=BS} and
Proposition~\ref{BSChar3} in fact show that Bousfield-Segal objects in $\mathbb{M}$ are exactly the Segal objects with 
invertible Bousfield map $\beta_2$. 
\end{remark}

Let $X_{\operatorname{hoequiv}}\subseteq X_1$ denote the full simplicial subset of \emph{homotopy equivalences} in
$X$, generated by those $f\in (X_{1})_0$ which become isomorphisms in $\mathrm{Ho}(X)$.

\begin{corollary}\label{Charvia2}
A Segal space $X$ is a Bousfield-Segal space if and only if its associated homotopy category $\mathrm{Ho}(X)$ is a 
groupoid.
\end{corollary}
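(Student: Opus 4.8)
The statement has two directions, and the forward one is essentially already in hand. If the Segal space $X$ is in addition a Bousfield--Segal space, then \Cref{B=BSspaces} identifies $\mathrm{Ho}(X)$ with the groupoid $\mathrm{Ho}_B(X)$, so $\mathrm{Ho}(X)$ is a groupoid. The plan is therefore to concentrate on the converse: assuming $X$ is a Segal space whose homotopy category $\mathrm{Ho}(X)$ is a groupoid, I would show that $X$ is a Bousfield--Segal space. By \Cref{BSChar3} it suffices to prove that the Bousfield map $\beta_2=c_2\setminus X\colon X_2\to X_1\times_{X_0}^B X_1$ is an acyclic fibration; it is already a fibration, since $c_2$ is a monomorphism and $X$ is $v$-fibrant, so only the weak equivalence part remains.

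The key idea is to compare $\beta_2$ with the Segal map $\xi_2\colon X_2\to X_1\times_{X_0}^S X_1$, which is an acyclic fibration because $X$ is a Segal space and hence admits a section $\eta_2$. First I would record that both $X_1\times_{X_0}^S X_1$ and $X_1\times_{X_0}^B X_1$ project to $X_1$ by remembering the first edge, that these projections are fibrations (as pullbacks of the fibration $d_1\colon X_1\twoheadrightarrow X_0$), and that $\mathrm{pr}_1\circ\xi_2=d_2=\mathrm{pr}_1\circ\beta_2$. Since $\eta_2$ is a section of the trivial fibration $\xi_2$, the two lifts $\mathrm{id}_{X_2}$ and $\eta_2\xi_2$ of $\xi_2$ against itself are homotopic, whence $\beta_2\simeq(\beta_2\eta_2)\circ\xi_2$; as $\xi_2$ is a weak equivalence, $\beta_2$ is a weak equivalence if and only if $\beta_2\eta_2\colon X_1\times_{X_0}^S X_1\to X_1\times_{X_0}^B X_1$ is one. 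This composite lies over $X_1$, and on the fibre over an edge $a\colon x\to y$ it is exactly precomposition $\blank\circ a\colon X_1(y,w)\to X_1(x,w)$ for every target $w$, since $\eta_2(a,b)$ witnesses the composite $b\circ a$ and $\beta_2$ returns the pair $(a,b\circ a)$.

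The remaining, and main, point is that this fibrewise map is a weak equivalence. Because $\mathrm{Ho}(X)$ is a groupoid, every class $[a]$ is invertible, so by the defining property of $X_{\operatorname{hoequiv}}$ every edge $a$ of $X$ is a homotopy equivalence; by Rezk's theory of homotopy equivalences in Segal spaces (\cite{rezk}), precomposition with a homotopy equivalence induces a weak equivalence on all hom-spaces, so $\blank\circ a\colon X_1(y,\blank)\to X_1(x,\blank)$ is a weak equivalence for every $a$. A map of fibrations over $X_1$ that is a fibrewise weak equivalence is a weak equivalence, hence $\beta_2\eta_2$, and therefore $\beta_2$, is a weak equivalence; being also a fibration, $\beta_2$ is acyclic, and \Cref{BSChar3} lets me conclude that $X$ is a Bousfield--Segal space. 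I expect the crux to be precisely the passage from the $1$-categorical hypothesis that $\mathrm{Ho}(X)$ is a groupoid to the space-level statement that precomposition with an equivalence is a weak equivalence; everything else is formal two-out-of-three and fibrewise bookkeeping. (Alternatively, one could run the $I\Delta^2$-automorphism argument of \Cref{conetospine} in reverse, deducing $c_2\in A$ from $\mathrm{sp}_2\in A$ once the inclusions $\Delta^n\to I\Delta^n$ are placed in $A$ using invertibility of edges.)
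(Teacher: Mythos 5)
Your proposal is correct and follows essentially the same route as the paper: the forward direction via Corollary~\ref{B=BSspaces}, and the converse by noting that $\mathrm{Ho}(X)$ being a groupoid forces $X_{\mathrm{hoequiv}}=X_1$, deducing that $\beta_2$ is acyclic, and concluding with Proposition~\ref{BSChar3}. The only difference is that where the paper cites \cite[Lemma 11.6]{rezk} as a black box to get acyclicity of $\beta_2$, you explicitly unpack that step (comparing $\beta_2$ with $\xi_2$ over $X_1$ via a section $\eta_2$ and reducing to the fact that precomposition with a homotopy equivalence is a weak equivalence on mapping spaces), which is a valid, slightly more self-contained rendering of the same argument.
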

\begin{proof}
One direction was shown in Corollary~\ref{B=BSspaces}. For the other direction, let $X$ be a Segal space and assume
$\mathrm{Ho}(X)$ is a groupoid. That means that $X_{\mathrm{hoequiv}}=X_1$, and so it follows from
\cite[Lemma 11.6]{rezk} that the Bousfield map $\beta_2\colon X_2\twoheadrightarrow X_1\times_{X_0}X_1$ is a weak 
equivalence. Hence $X$ is a Bousfield-Segal space by Proposition~\ref{BSChar3}.
\end{proof}

Whenever $X$ is a Segal space, the fibration $I\Delta^1\setminus X\twoheadrightarrow X_1$ induced by the canonical 
inclusion $e\colon\Delta^1\rightarrow I\Delta^1$ factors through the subspace $X_{\mathrm{hoequiv}}$, and Rezk showed in 
\cite[Theorem 6.2]{rezk} that the resulting fibration
$e\setminus X\colon I\Delta^1\setminus X\rightarrow X_{\mathrm{hoequiv}}$ is acyclic.

\begin{corollary}\label{rezktechnical}
A Segal space $X$ is a Bousfield-Segal space if and only if the fibration
$I\Delta^1\setminus X\twoheadrightarrow X_1$ is acyclic. In particular, the model structures
$(s\mathbf{S},B)$ and $\mathcal{L}_{p_1^{\ast}(e)}(s\mathbf{S},S)$ coincide.
\end{corollary}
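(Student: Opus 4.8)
The plan is to leverage the factorization of $e\setminus X$ through the subspace of homotopy equivalences recalled just before the statement, together with the characterization of Bousfield-Segal spaces obtained in Corollary~\ref{Charvia2}. First I would record that, for a Segal space $X$, the fibration under consideration is precisely $e\setminus X\colon I\Delta^1\setminus X\rightarrow \Delta^1\setminus X=X_1$, and that it factors as
\[I\Delta^1\setminus X\xrightarrow{\ \alpha\ }X_{\mathrm{hoequiv}}\overset{\iota}{\hookrightarrow} X_1,\]
where $\alpha$ is a weak equivalence by \cite[Theorem 6.2]{rezk}. By 3-for-2 for weak equivalences, the composite $e\setminus X$ is acyclic if and only if the inclusion $\iota\colon X_{\mathrm{hoequiv}}\hookrightarrow X_1$ is a weak equivalence.

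Next I would observe that $X_{\mathrm{hoequiv}}$ is a union of path components of $X_1$: whether an edge becomes an isomorphism in $\mathrm{Ho}(X)$ depends only on its class in $\pi_0 X_1$, so the generating vertices of $X_{\mathrm{hoequiv}}$ are closed under the relation $\sim$, and the full subobject they span is a union of components. Consequently $\iota$ is a weak equivalence if and only if it is surjective on $\pi_0$, i.e.\ if and only if $X_{\mathrm{hoequiv}}=X_1$, which says exactly that every edge of $X$ is a homotopy equivalence, i.e.\ that $\mathrm{Ho}(X)$ is a groupoid. Combining this with Corollary~\ref{Charvia2} gives the first biconditional: the fibration $I\Delta^1\setminus X\twoheadrightarrow X_1$ is acyclic if and only if $X$ is a Bousfield-Segal space.

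For the second assertion I would compare fibrant objects, since both $(s\mathbf{S},B)$ and $\mathcal{L}_{p_1^{\ast}(e)}(s\mathbf{S},S)$ are left Bousfield localizations of $(s\mathbf{S},S)$ (for $(s\mathbf{S},B)$ this uses Theorem~\ref{B=BS}). Using the natural isomorphism $\mathrm{Hom}_2(p_1^{\ast}(A),X)\cong A\setminus X$ already exploited for the Segal maps, a Segal space $X$ is $p_1^{\ast}(e)$-local precisely when $e\setminus X$ is a weak equivalence, which by the first part is precisely when $X$ is a Bousfield-Segal space. Hence the fibrant objects of $\mathcal{L}_{p_1^{\ast}(e)}(s\mathbf{S},S)$ are exactly the Bousfield-Segal spaces, i.e.\ the fibrant objects of $(s\mathbf{S},B)$, and two left Bousfield localizations of the same model structure with the same fibrant objects coincide.

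The factorization and the mapping-space identification are routine; the only point requiring a little care is the claim that $X_{\mathrm{hoequiv}}$ is a union of path components, so that a weak-equivalence inclusion forces the equality $X_{\mathrm{hoequiv}}=X_1$ rather than merely a $\pi_0$-surjection onto some of the components. I expect this to be the main (and rather minor) obstacle; everything else is an assembly of Corollary~\ref{Charvia2}, Theorem~\ref{B=BS}, and Rezk's acyclicity result.
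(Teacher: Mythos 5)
Your argument is correct, and its second half coincides with the paper's: the paper likewise deduces $X_{\mathrm{hoequiv}}=X_1$ from the factorization through $X_{\mathrm{hoequiv}}$ and then invokes the characterization of Bousfield--Segal spaces via $\mathrm{Ho}(X)$ being a groupoid (Corollary~\ref{Charvia2}). The differences are these. For the forward direction the paper argues combinatorially: a Bousfield--Segal space sends all left horn inclusions to acyclic fibrations (Corollary~\ref{corihicinterchange}), and $\Delta^1\rightarrow I\Delta^1$ is cellular for these by Proposition~\ref{freegrpdinv}, so $e\setminus X$ is acyclic with no reference to $\mathrm{Ho}(X)$; you instead route through Rezk's Theorem~6.2 together with the already-established fact that $X_{\mathrm{hoequiv}}=X_1$ for a Bousfield--Segal space. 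Both are legitimate; the paper's version reuses machinery it has already built, while yours leans more heavily on \cite[Theorem 6.2]{rezk}. For the reverse direction the paper takes a slightly more economical path: an acyclic \emph{fibration} is surjective on vertices, and since it factors through the full simplicial subset $X_{\mathrm{hoequiv}}$ this forces $X_{\mathrm{hoequiv}}=X_1$ outright, with no need for your observation that $X_{\mathrm{hoequiv}}$ is a union of path components of $X_1$. That observation is nevertheless true (it is Rezk's Lemma~5.8: being a homotopy equivalence depends only on the component in $X_1$), so the step you flag as the delicate one does go through --- but you could have avoided it entirely by using surjectivity of the acyclic fibration rather than 3-for-2. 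Your treatment of the model-structure statement by comparison of fibrant objects is exactly the pattern the paper uses in Theorem~\ref{B=BS} and Proposition~\ref{BSChar3}, and is fine.
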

\begin{proof}
If $X$ is a Bousfield-Segal space, the fibration $I\Delta^1\setminus X\twoheadrightarrow X_1$ is acyclic by
Proposition~\ref{freegrpdinv}. Vice versa, if $X$ is a Segal space and the fibration
$I\Delta^1\setminus X\twoheadrightarrow X_1$ is acyclic, it is surjective as well. Since it factors through the 
subobject $X_{\mathrm{hoequiv}}$, this implies $X_{\mathrm{hoequiv}} = X_1$. It follows that $\mathrm{Ho}(X)$ is a 
groupoid and hence $X$ is a Bousfield-Segal space by Corollary~\ref{B=BS}.
\end{proof}

\begin{remark}
Corollary~\ref{rezktechnical} justifies the motivating analogies stated in the Introduction, which related the 
construction of Bousfield-Segal spaces from Segal spaces to both the construction of $\mathbf{Gpd}$ from
$\mathbf{Cat}$ and the construction of $(\mathbf{S},\mathrm{Kan})$ from $(\mathbf{S},\mathrm{QCat})$. In a similar vein,
Corollary~\ref{Charvia2} yields an analogy to Joyal's criterion for a quasi-category to be Kan complex via its 
homotopy category (for instance as presented in \cite[Proposition 1.2.4.3 and 1.2.5.1]{luriehtt}).

It follows that a Segal space $X$ is a Bousfield-Segal space if and only if its rows are Kan complexes. That means
Bousfield-Segal spaces are exactly the Segal spaces horizontally fibrant in the projective model structure over
$(\mathbf{S},\mathrm{Kan})$.
\end{remark}

At this point we have seen in various ways that the inclusion $(s\mathbf{S},B)\rightarrow(s\mathbf{S},S)$ is part of a 
left Bousfield localization, and as such it is the right adjoint in a homotopy localization in the sense of
\cite[Definition 7.16]{jtqcatvsss}. 
We conclude this section with the construction of a further right adjoint to this inclusion, given by a core 
construction that associates to a Segal space $X$ the largest Bousfield-Segal space $\mathrm{Core}(X)$ contained in $X$.
We proceed in the following two steps. 

First, we construct a functor ``Core'' as the strict right adjoint to the inclusion
\begin{align}\label{equbsincl}
\iota\colon\text{Bousfield-Segal}\hookrightarrow\text{Segal}
\end{align}
of the full subcategory of Bousfield-Segal spaces into the full subcategory of Segal spaces in $s\mathbf{S}$. We do so 
by an elementary procedure which takes the subcollection $X_{\mathrm{hoequiv}}\subseteq X_1$ of equivalences in a 
Segal space $X$ and assigns it the largest sub-bisimplicial set $\mathrm{Core}(X)\subseteq X$ generated from it.

Second, we show that the colocalization $(\iota,\mathrm{Core})$ can be extended to a homotopy colocalization of the form
\[(L,R)\colon(s\mathbf{S},\mathrm{B})\rightarrow (s\mathbf{S},\mathrm{S}).\]
This means $(L,R)$ is a Quillen pair such that $L(X)\simeq \iota(X)$ in $(s\mathbf{S},\mathrm{S})$ whenever $X$ is a 
Bousfield-Segal space, $R(X)\simeq\mathrm{Core}(X)$ in $(s\mathbf{S},\mathrm{B})$ whenever $X$ is a Segal space, and 
such that the derived unit of the adjunction is a natural weak equivalence in $(s\mathbf{S},\mathrm{B})$.
The first step is given by the following lemma.

\begin{lemma}\label{lemmacorestrict}
The inclusion (\ref{equbsincl}) has a right adjoint.
\end{lemma}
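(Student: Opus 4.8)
The plan is to construct the right adjoint $\mathrm{Core}$ explicitly as a subobject functor and then verify the universal property by hand. Given a Segal space $X$, I would define $\mathrm{Core}(X) \subseteq X$ to be the largest sub-bisimplicial set whose edges lie in $X_{\mathrm{hoequiv}}$. Concretely, a simplex $\sigma \in X_{nm}$ should belong to $\mathrm{Core}(X)_{nm}$ precisely when all of its \emph{edges}---that is, the images of $\sigma$ under every map $\Delta^1 \setminus X \to \Delta^n \setminus X$ picking out a $1$-cell in the column direction, evaluated appropriately---factor through $X_{\mathrm{hoequiv}}$. Since $X_{\mathrm{hoequiv}} \subseteq X_1$ is a full simplicial subset closed under the simplicial operators (it is generated by the components of $(X_1)_0$ that become isomorphisms in $\mathrm{Ho}(X)$, and the hom-relation respects degeneracies and the groupoid structure of $\mathrm{Ho}(X)$), this assignment is stable under all bisimplicial face and degeneracy maps, so $\mathrm{Core}(X)$ is genuinely a sub-bisimplicial set of $X$.

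The key steps, in order, are as follows. First I would check that $\mathrm{Core}(X)$ is itself a Bousfield-Segal space. It is $v$-fibrant because it is a union of components of the fibers of $\beta_n$ over tuples of equivalences, and by Corollary~\ref{Charvia2} it suffices to show $\mathrm{Ho}(\mathrm{Core}(X))$ is a groupoid; but every edge of $\mathrm{Core}(X)$ is by construction an equivalence in $X$, hence becomes an isomorphism in $\mathrm{Ho}(X)$, and these invert inside $\mathrm{Core}(X)$ because the inverse $1_x/f$ produced by the fraction operation is again an equivalence whose constituent cells lie in $X_{\mathrm{hoequiv}}$. Second, I would verify the adjunction bijection
\[
\mathrm{Segal}(\iota Y, X) \cong \mathrm{Bousfield\text{-}Segal}(Y, \mathrm{Core}(X))
\]
for every Bousfield-Segal space $Y$. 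Given a map $f\colon \iota Y \to X$ of Segal spaces, every edge of $Y$ is an equivalence (again by Corollary~\ref{Charvia2}), so $f$ sends edges of $Y$ into $X_{\mathrm{hoequiv}}$; hence $f$ factors through the maximal such subobject $\mathrm{Core}(X)$, and this factorization is unique because $\mathrm{Core}(X) \hookrightarrow X$ is a monomorphism. Conversely any map into $\mathrm{Core}(X)$ post-composes with the inclusion to give a map into $X$, and these two assignments are mutually inverse, clearly natural in both variables.

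The main obstacle I expect is the bookkeeping in the first step: making precise the claim that $X_{\mathrm{hoequiv}}$ being closed under the simplicial structure forces $\mathrm{Core}(X)$ to be closed under \emph{both} the vertical and horizontal bisimplicial operators, and confirming that the resulting subobject really is $v$-fibrant and $B$-local rather than merely a sub-presheaf. The cleanest way to handle this is to characterize $\mathrm{Core}(X)$ fiberwise: for each $n$, describe $\mathrm{Core}(X)_n \subseteq X_n$ as the preimage under $\beta_n$ of the sub-simplicial set $X_{\mathrm{hoequiv}} \times^B_{X_0} \dots \times^B_{X_0} X_{\mathrm{hoequiv}} \subseteq X_1 \times^B_{X_0} \dots \times^B_{X_0} X_1$, and then observe that the Bousfield map of $\mathrm{Core}(X)$ is the pullback of the acyclic fibration $\beta_n$ along this inclusion, hence acyclic. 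This reduces everything to the already-established acyclicity of the $\beta_n$ for $X$ together with the elementary fact that equivalences are closed under the fraction operation, and the universal property then follows formally.
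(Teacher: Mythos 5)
Your construction of $\mathrm{Core}(X)$ and your verification of the adjunction bijection match the paper's: the paper also defines $\mathrm{Core}(X)_n\subseteq X_n$ as the subspace spanned by the simplices all of whose edges are homotopy equivalences, and then checks fibrancy and the factorization property exactly as you do. The problem lies in the step that carries the actual mathematical content, namely showing that $\mathrm{Core}(X)$ is a Bousfield-Segal space. Your ``cleanest way'' is to exhibit the Bousfield map of $\mathrm{Core}(X)$ as the pullback of ``the acyclic fibration $\beta_n$'' of $X$ along the inclusion $X_{\mathrm{hoequiv}}\times^B_{X_0}\dots\times^B_{X_0}X_{\mathrm{hoequiv}}\hookrightarrow X_1\times^B_{X_0}\dots\times^B_{X_0}X_1$, ``reducing everything to the already-established acyclicity of the $\beta_n$ for $X$.'' But $X$ here is only a Segal space, not a Bousfield-Segal space: its Bousfield maps $\beta_n$ are fibrations but are acyclic precisely when $X$ already is a Bousfield-Segal space, in which case $\mathrm{Core}(X)=X$ and there is nothing to prove. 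So this reduction rests on a false premise and the argument as stated fails in exactly the nontrivial case.

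Your other route --- invoke Corollary~\ref{Charvia2} and show $\mathrm{Ho}(\mathrm{Core}(X))$ is a groupoid --- is the right idea and is what the paper does, but it has a missing step you never address: Corollary~\ref{Charvia2} applies only to \emph{Segal} spaces, so you must first verify that $\mathrm{Core}(X)$ is Reedy fibrant \emph{and} satisfies the Segal conditions. Neither follows from $\mathrm{Core}(X)_n$ being a union of components of $X_n$ alone. The paper handles both at once by observing that the matching maps and the Segal maps of $\mathrm{Core}(X)$ sit in cartesian squares over those of $X$ (the latter using that $X_{\mathrm{hoequiv}}$ is closed under composition, so that $\xi_n^{-1}\bigl(X_{\mathrm{hoequiv}}\times^S_{X_0}\dots\times^S_{X_0}X_{\mathrm{hoequiv}}\bigr)=\mathrm{Core}(X)_n$); acyclicity of the restricted Segal maps is then inherited by pullback, and only afterwards does Corollary~\ref{Charvia2} apply, via $\mathrm{Core}(X)_{\mathrm{hoequiv}}=\mathrm{Core}(X)_1$. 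In short: replace the Bousfield maps by the Segal maps in your pullback argument and the proof goes through; with the Bousfield maps it does not.
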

\begin{proof}
Let $X$ be a Segal space and let $X_{\mathrm{hoequiv}}\subseteq X_1$ be the space of 
equivalences in $X$. We can define a bisimplicial set $\mathrm{Core}(X)$ recursively as follows. Let
$\mathrm{Core}(X)_0=X_0$, let
\[\mathrm{Core}(X)_1=X_{\mathrm{hoequiv}}\subseteq X_1,\]
and let the corresponding two boundaries and the degeneracy between these two objects be induced directly from those of 
$X$ by restriction.

For $n\geq 2$ define $\mathrm{Core}(X)_n\subseteq X_n$ to be the subspace generated by the vertices of $X_{n0}$ 
whose horizontal 1-boundaries lie in $X_{\mathrm{hoequiv}}$.
The boundaries and degeneracies of $\mathrm{Core}(X)$ are directly obtained by restriction of the respective boundaries 
and degeneracies of $X$, so we obtain a bisimplicial subset $\mathrm{Core}(X)\subseteq X$.
By construction, we obtain cartesian squares between the matching maps for all $n\geq 2$ as follows.
\begin{align}\label{diaglemmacorestrict1}
\begin{gathered}
\xymatrix{
\mathrm{Core}(X)_n\ar@{->>}[d]^{\partial}\ar@{^(->}[r]\ar@{}[dr]|(.3){\pbs} & X_n\ar@{->>}[d]^{\partial} \\
\partial\Delta^n\setminus\mathrm{Core}(X)\ar@{^(->}[r]  & \partial\Delta^n\setminus X
}
\end{gathered}
\end{align}
Since $X_{\mathrm{hoequiv}}\subseteq X_1$ is closed under composition, we furthermore obtain cartesian squares between 
the respective Segal maps as well.
\begin{align}\label{diaglemmacorestrict2}
\begin{gathered}
\xymatrix{
\mathrm{Core}(X)_n\ar@{->>}[d]^{\xi_n}\ar@{^(->}[r]\ar@{}[dr]|(.3){\pbs} & X_n\ar@{->>}[d]^{\xi_n} \\
\mathrm{Core}(X)_1\times_{X_0}^S\dots\times_{X_0}^S\mathrm{Core}(X)_1\ar@{^(->}[r]  & X_1\times_{X_0}^S\dots\times_{X_0} ^S X_1
}
\end{gathered}
\end{align} 
It follows that $\mathrm{Core}(X)$ is Reedy fibrant by (\ref{diaglemmacorestrict1}) and satisfies the Segal 
conditions by (\ref{diaglemmacorestrict2}). Furthermore, by construction we have
\[\mathrm{Core}(X)_{\mathrm{hoequiv}}=X_{\mathrm{hoequiv}}=\mathrm{Core}(X)_1,\]
so $\mathrm{Core}(X)$ is a Bousfield-Segal space by Corollary~\ref{Charvia2}. It is straight forward to show that this 
construction extends to a functor
\[\mathrm{Core}\colon\mathrm{Segal}\rightarrow\mathrm{Bousfield\text{-}Segal}.\]
which is right adjoint to the inclusion $\mathrm{Bousfield\text{-}Segal}\subset\mathrm{Segal}$.
\end{proof}

Towards the second step, note that for any Segal space $X$ the first row $\mathrm{Core}(X)_{\bullet 0}$ is the largest 
Kan complex $J(X_{\bullet 0})$ contained in the quasi-category $X_{\bullet 0}$ as constructed in
\cite[Proposition 1.16]{jtqcatvsss}). So the given right adjoint to the inclusion (\ref{equbsincl}) is a direct 
generalization of the right adjoint to the inclusion of Kan complexes into quasi-categories. Joyal also shows in \cite{joyalqcatslecture} that the functor
$J\colon\mathrm{QCat}\rightarrow\mathrm{Kan}$ can be extended to the Quillen right adjoint $k^!$ of a homotopy 
colocalization $(k_!,k^!)\colon (\mathbf{S},\mathrm{QCat})\rightarrow (\mathbf{S},\mathrm{Kan})$. Dual to the notion of 
homotopy localization, a Quillen pair is a homotopy colocalization if the left derived of its left adjoint is fully 
faithful on the underlying homotopy categories. 

Analogously, one can extend the core construction of a Segal space to a homotopy colocalization between the respective 
model structures.  
Therefore, consider the functor $k\bBox\mathrm{id}\colon\Delta\times\Delta\rightarrow s\mathbf{S}$ for
$k([n])=I\Delta^n$, together with its left Kan extension
$(k\bBox\mathrm{id})_!\colon s\mathbf{S}\rightarrow s\mathbf{S}$. It has a right adjoint $(k\bBox\mathrm{id})^!$, given 
by
\[(k\bBox\mathrm{id})^!(X)_{nm}=s\mathbf{S}(I\Delta^n\bBox \Delta^m,X).\]

The right adjoint $(k\bBox\mathrm{id})^!$ is a simplicially enriched functor with respect to the hom-spaces
$\mathrm{Hom}_2$ given in Proposition~\ref{prophom2}, and preserves the associated tensors (described in 
\cite[Proposition 2.4]{jtqcatvsss}) as well. Hence the pair $((k\bBox\mathrm{id})_!,(k\bBox\mathrm{id})^!)$ gives rise 
to a simplicially enriched adjoint pair on $s\mathbf{S}$.

\begin{proposition}\label{propBScoloc}
The functor $k\bBox\mathrm{id}\colon\Delta\times\Delta\rightarrow s\mathbf{S}$ induces a Quillen pair
\begin{align}\label{BScoloc}
\xymatrix{
(k\bBox\mathrm{id})_!\colon(s\mathbf{S},B)\ar@/^/[r]\ar@{}[r]|{\bot} & (s\mathbf{S},S)\colon (k\bBox\mathrm{id})^!.\ar@/^/[l]
}
\end{align}
It comes together with a natural transformation $\beta\colon(k\bBox\mathrm{id})^!\rightarrow\mathrm{id}$ which factors 
through a vertical Reedy weak equivalence $\beta_X\colon (k\bBox\mathrm{id})^!(X)\twoheadrightarrow\mathrm{Core}(X)$ 
whenever $X$ is a Segal space.

\end{proposition}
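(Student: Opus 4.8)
The plan is to establish the Quillen pair through the universal property of left Bousfield localization, and to identify the transformation $\beta$ columnwise. The basic observation is that $(k\bBox\mathrm{id})_!$ is ``vertical $k_!$'': since it is cocontinuous and $\bBox$ is bicontinuous, one has $(k\bBox\mathrm{id})_!(A\bBox B)\cong k_!(A)\bBox B$, and therefore $(k\bBox\mathrm{id})_!(u\bBox\sprime v)\cong k_!(u)\bBox\sprime v$ on pushout-products. Dually, the $n$-th column of $(k\bBox\mathrm{id})^!(X)$ is $I\Delta^n\setminus X$ with $n$-th matching object $k_!(\partial\Delta^n)\setminus X$, and the canonical inclusions $e\colon\Delta^n\to I\Delta^n$ induce $\beta\colon(k\bBox\mathrm{id})^!\to\mathrm{id}$, columnwise $e\setminus X\colon I\Delta^n\setminus X\to\Delta^n\setminus X=X_n$. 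Taking the established adjunction $(k\bBox\mathrm{id})_!\dashv(k\bBox\mathrm{id})^!$ for granted, I would prove it is Quillen by showing that $(k\bBox\mathrm{id})_!$ is left Quillen for the base structure $(s\mathbf{S},R_v)$ and sends the maps in $B$ to $S$-local weak equivalences, and then invoking \cite[Theorem 3.3.20]{hirschhorn03}.

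For the base structure, since $(k\bBox\mathrm{id})_!$ preserves colimits it suffices to treat the generators $\mathcal{I}_v$ and $\mathcal{J}_v$, whose images are $k_!(\delta_n)\bBox\sprime\delta_m$ and $k_!(\delta_n)\bBox\sprime h_i^m$. The one genuinely new input, which I expect to be the main obstacle, is that $k_!$ preserves the generating monomorphisms, i.e.\ that $k_!(\delta_n)\colon k_!(\partial\Delta^n)\to I\Delta^n$ is itself a monomorphism. Here I would use the cone description of $I\Delta^n$ recalled before Proposition~\ref{freegrpdinv}: an $m$-simplex of $I\Delta^n$ is a sequence $(c_0,\dots,c_m)$ of vertices of $[n]$, and unwinding the defining colimit identifies the simplices of $k_!(\partial\Delta^n)$ with exactly those sequences whose support omits a vertex; as each such sequence has a well-defined support it occurs once, so $k_!(\delta_n)$ is injective. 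Granting this, $k_!(\delta_n)$ is a cofibration, so $k_!(\delta_n)\bBox\sprime\delta_m$ is a monomorphism; and since the horns $h_i^m$ occupy the second coordinate, which $k_!$ does not touch, $k_!(\delta_n)\bBox\sprime h_i^m$ is a (cofibration)$\bBox\sprime$(anodyne) map and hence a $v$-acyclic cofibration by the box-product calculus of \cite[Section 2]{jtqcatvsss}. Thus $(k\bBox\mathrm{id})_!$ preserves cofibrations and acyclic cofibrations for $R_v$.

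For the localizing maps, recall $p_1^{\ast}(A)=A\bBox\Delta^0$, so $(k\bBox\mathrm{id})_!(p_1^{\ast}(c_n))\cong p_1^{\ast}(k_!(c_n))$ with $k_!(c_n)\colon k_!(C_n)\to I\Delta^n$. Both $I\Delta^n=N(I[n])$ and $k_!(C_n)$, a star of copies of $I\Delta^1$ glued at the pinnacle $0$, are contractible, so $k_!(c_n)$ is a weak homotopy equivalence; as $p_1^{\ast}$ is left Quillen into $(s\mathbf{S},R_v)$ (being left adjoint to the right Quillen functor $\iota_1^{\ast}$) and all objects are cofibrant, $p_1^{\ast}(k_!(c_n))$ is an $R_v$-, hence $S$-local, weak equivalence. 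Because $(k\bBox\mathrm{id})_!$ is now left Quillen and the maps in $B$ are between cofibrant objects, their derived images agree with $p_1^{\ast}(k_!(c_n))$, and \cite[Theorem 3.3.20]{hirschhorn03} gives the Quillen pair $(s\mathbf{S},B)\to(s\mathbf{S},S)$.

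It remains to factor $\beta$ through $\mathrm{Core}$. When $X$ is a Segal space, every map out of $I\Delta^n$ sends the invertible edges of $I\Delta^n$ to equivalences, hence lands in the equivalences of $X$ and factors through $\mathrm{Core}(X)$; this yields $I\Delta^n\setminus X\cong I\Delta^n\setminus\mathrm{Core}(X)$, so $\beta$ factors as $\beta_X\colon(k\bBox\mathrm{id})^!(X)\to\mathrm{Core}(X)\hookrightarrow X$ with $(k\bBox\mathrm{id})^!(X)\cong(k\bBox\mathrm{id})^!(Y)$ for $Y:=\mathrm{Core}(X)$, a Bousfield-Segal space by Lemma~\ref{lemmacorestrict}. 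To see that $\beta_X=\beta_Y$ is an acyclic $v$-fibration I would apply \cite[Theorem 5.2.5]{hovey}: the relative matching map $\langle\delta_n\setminus\beta_Y\rangle$ is $j_n\setminus Y$ for the pushout-corner map $j_n\colon\Delta^n\cup_{\partial\Delta^n}k_!(\partial\Delta^n)\to I\Delta^n$. The class $A_Y=\{f\text{ a monomorphism}\mid f\setminus Y\text{ an acyclic fibration}\}$ is saturated with the right cancellation property and, because $Y$ is Bousfield-Segal, contains $\{h_0^n\mid n\geq 2\}$ by Corollary~\ref{corihicinterchange}. Since $e$ and $\eta_{\partial\Delta^n}\colon\partial\Delta^n\to k_!(\partial\Delta^n)$ are $\{h_0^n\}$-cellular by (the proof of) Proposition~\ref{freegrpdinv}, both lie in $A_Y$, so the factorization $e=j_n\circ(\text{pushout of }\eta_{\partial\Delta^n})$ together with right cancellation gives $j_n\in A_Y$. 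Hence each $\langle\delta_n\setminus\beta_Y\rangle$ is an acyclic Kan fibration, $\beta_Y$ is an acyclic $v$-fibration, and in particular a vertical Reedy weak equivalence, as required.
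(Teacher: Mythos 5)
Most of your write-up is sound, and in places more detailed than the paper's own argument: the paper likewise checks the Reedy-level Quillen pair on the generating sets and obtains the factorization of $\beta$ through $\mathrm{Core}$, and your matching-map verification that $\beta_Y$ is an acyclic $v$-fibration is a legitimate refinement of the paper's levelwise statement. However, there is a genuine error at the one step that carries the real content of the proposition, namely that the pair descends to the localizations. You claim that $p_1^{\ast}(k_!(c_n))$ lies in $\mathcal{W}_v$ because $k_!(c_n)$ is a weak homotopy equivalence and $p_1^{\ast}$ is left Quillen into $(s\mathbf{S},R_v)$. This fails: the $m$-th column of $p_1^{\ast}(f)$ is the map of \emph{discrete} simplicial sets $f_m$, so $p_1^{\ast}(f)\in\mathcal{W}_v$ forces $f$ to be a levelwise bijection, and $k_!(c_2)$ already fails this in degree $1$ (the wedge $I\Delta^1\cup_{\Delta^0}I\Delta^1$ has $7$ one-simplices while $I\Delta^2$ has $9$). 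Indeed $p_1^{\ast}$ does not preserve acyclic cofibrations into the vertical Reedy structure: $p_1^{\ast}(A)$ is $v$-fibrant for every simplicial set $A$, so if $\iota_1^{\ast}$ were right Quillen there, every simplicial set would be a Kan complex. The decisive sanity check is that your argument applied verbatim to the weak homotopy equivalence $c_n\colon C_n\rightarrow\Delta^n$ (both objects are contractible) would show that $p_1^{\ast}(c_n)$ is an $S$-local weak equivalence, i.e.\ that every Segal space is a Bousfield-Segal space, which is false.

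The statement you actually need --- that $k_!(c_n)\setminus X$ is a weak homotopy equivalence for every Segal space $X$ --- is true, but it is essentially equivalent to the assertion that $(k\bBox\mathrm{id})^!$ carries Segal spaces to $B$-local objects, and that is how the paper closes the argument: it shows that $\beta_X\colon(k\bBox\mathrm{id})^!(X)\rightarrow\mathrm{Core}(X)$ is a levelwise acyclic fibration, each $(\beta_X)_n$ being isomorphic to $I\Delta^n\setminus\mathrm{Core}(X)\rightarrow\Delta^n\setminus\mathrm{Core}(X)$ and hence acyclic by Proposition~\ref{freegrpdinv} because $\mathrm{Core}(X)$ is a Bousfield-Segal space; consequently $(k\bBox\mathrm{id})^!(X)$ is a $v$-fibrant object Reedy-equivalent to a Bousfield-Segal space, hence itself $B$-local. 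Your own final paragraph contains exactly this computation, so the repair is available to you: replace the localization criterion on the left adjoint by the dual criterion on the right adjoint (a Quillen pair out of $(s\mathbf{S},R_v)$ into $(s\mathbf{S},S)$ descends to one out of $\mathcal{L}_B(s\mathbf{S},R_v)$ precisely when the right adjoint sends fibrant objects to $B$-local objects), and feed your matching-map argument into that criterion instead.
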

\begin{proof}
The inclusions $[n]\rightarrow I[n]$ induce a natural transformation
$\beta\colon(k\bBox\mathrm{id})^!\rightarrow\mathrm{id}$ by precomposition. It factors through $\mathrm{Core}(X)$ 
whenever $X$ is a Segal space, because $(\beta_X)_1$ is exactly the fibration 
$e\setminus X\colon I\Delta^1\setminus X\twoheadrightarrow X_1$. Generally, the maps
\[(\beta_X)_n\colon(k\bBox\mathrm{id})^!(X)_n\rightarrow \mathrm{Core}(X)_n\]
are isomorphic to the fibrations $I\Delta^n\setminus \mathrm{Core}(X)\rightarrow \Delta^n\setminus\mathrm{Core}(X)$ and 
hence are acyclic by Proposition~\ref{freegrpdinv}. Thus the natural transformation $\beta$ yields a levelwise acyclic 
fibration $\beta_X\colon(k\bBox\mathrm{id})^!(X)\rightarrow \mathrm{Core}(X)$ whenever $X$ is a Segal space. This 
shows that the right adjoint $(k\bBox\mathrm{id})^!$ is an extension of the core construction.

The left adjoint $(k\bBox\mathrm{id})_!$ furthermore preserves Reedy cofibrations and Reedy acyclic 
cofibrations as can be verified on the generating sets $\mathcal{I}_v$ and $\mathcal{J}_v$. Thus, the two functors yield 
a Quillen pair from the vertical Reedy model structure to itself. In order for them to descend to a Quillen pair between 
the two localizations as in (\ref{BScoloc}) we have to show that the right adjoint $(k\bBox\mathrm{id})^!$ maps 
every Segal space $X$ to a Bousfield-Segal space.
But we have seen that for every Segal space $X$, the map
$\beta_X\colon (k\bBox\mathrm{id})^!(X)\rightarrow\mathrm{Core}(X)$ is a Reedy weak equivalence. We know that
$\mathrm{Core}(X)$ is a Bousfield-Segal space and that the domain $(k\bBox\mathrm{id})^!(X)$ is Reedy fibrant,
so $(k\bBox\mathrm{id})^!(X)$ is a Bousfield-Segal space as well.
\end{proof}

\begin{theorem}
The pair (\ref{BScoloc}) is a homotopy colocalization. It furthermore comes together with a natural transformation
$\alpha\colon\mathrm{id}\rightarrow(k\bBox\mathrm{id})_!$ such that $\alpha_X$ is a weak equivalence in
$(s\mathbf{S},\mathrm{S})$ whenever $X$ is a Bousfield-Segal space.
\end{theorem}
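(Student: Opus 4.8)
The plan is to establish the two assertions separately, deriving the homotopy colocalization from the statement about $\alpha$. Since every object is Reedy cofibrant, the same holds in both localizations, so $\mathbb{L}(k\bBox\mathrm{id})_!=(k\bBox\mathrm{id})_!$ and it suffices to check the derived unit on fibrant objects, i.e.\ on Bousfield–Segal spaces. I would first construct $\alpha$ as the left Kan extension along the Yoneda embedding of the inclusions $\Delta^n\bBox\Delta^m\hookrightarrow I\Delta^n\bBox\Delta^m$, so that on representables $\alpha$ is $e\colon\Delta^n\to I\Delta^n$ boxed with $\Delta^m$. Checking on representables and extending by naturality then gives the mate identity $\beta_{(k\bBox\mathrm{id})_!X}\circ\eta_X=\alpha_X$, where $\eta$ is the unit of the adjunction and $\beta$ the natural transformation of Proposition~\ref{propBScoloc}. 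This identity is the only link between $\alpha$ and $\beta$ that the rest of the argument needs.

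For the statement about $\alpha$, recall that a map is an $S$-local equivalence precisely when it induces weak equivalences on the function complexes $\mathrm{Hom}_2(\blank,W)$ into every Segal space $W$. Using that $(k\bBox\mathrm{id})_!\dashv(k\bBox\mathrm{id})^!$ is a simplicial adjunction together with the mate identity, $\mathrm{Hom}_2(\alpha_X,W)$ is identified (up to the adjunction isomorphism) with $\mathrm{Hom}_2(X,\beta_W)$. Factoring $\beta_W$ as $(k\bBox\mathrm{id})^!W\to\mathrm{Core}(W)\hookrightarrow W$, the first factor is a Reedy equivalence of Segal spaces by Proposition~\ref{propBScoloc} and hence induces a weak equivalence on $\mathrm{Hom}_2(X,\blank)$, so matters reduce to the core inclusion $j\colon\mathrm{Core}(W)\hookrightarrow W$. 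The key point is that $\mathrm{Hom}_2(X,j)$ is already an \emph{isomorphism} when $X$ is a Bousfield–Segal space: a $k$-simplex of $\mathrm{Hom}_2(X,W)$ is a map $g\colon p_2^{\ast}\Delta^k\times X\to W$, and restricting $g$ along each vertex $p_2^{\ast}(i)\times\mathrm{id}\colon X\to p_2^{\ast}\Delta^k\times X$ yields a map out of a Bousfield–Segal space, which by the strict adjunction of Lemma~\ref{lemmacorestrict} factors through $\mathrm{Core}(W)$; this forces every edge of $p_2^{\ast}\Delta^k\times X$ into $W_{\mathrm{hoequiv}}$ and so $g$ itself through $\mathrm{Core}(W)$. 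Hence $\alpha_X$ is an $S$-equivalence, which is the asserted property of $\alpha$.

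For the homotopy colocalization, take a Bousfield–Segal space $X$ and an $S$-fibrant replacement $r\colon(k\bBox\mathrm{id})_!(X)\xrightarrow{\sim}Z$, so the derived unit is $\mathrm{du}_X=(k\bBox\mathrm{id})^!(r)\circ\eta_X$, a map between the Bousfield–Segal spaces $X$ and $(k\bBox\mathrm{id})^!(Z)$. Naturality of $\beta$ at $r$ and the mate identity give $\beta_Z\circ\mathrm{du}_X=r\circ\alpha_X=:w$, an $S$-equivalence of Segal spaces by the previous paragraph. Writing $\beta_Z=j_Z\circ\bar\beta_Z$ with $\bar\beta_Z\colon(k\bBox\mathrm{id})^!Z\xrightarrow{\sim}\mathrm{Core}(Z)$ a Reedy equivalence and setting $v:=\bar\beta_Z\circ\mathrm{du}_X$, naturality of the counit $j$ (which is the identity on $X$) identifies $v$ with $\mathrm{Core}(w)$. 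Since $\mathrm{Core}$ sends Dwyer–Kan equivalences of Segal spaces to such (it cuts the mapping spaces down to their equivalence-components, where $w$ is still an equivalence), $v$ is an $S$-, hence $B$-, equivalence; two-out-of-three applied to $v=\bar\beta_Z\circ\mathrm{du}_X$ then shows $\mathrm{du}_X$ is a $B$-equivalence, i.e.\ $\mathbb{L}(k\bBox\mathrm{id})_!$ is fully faithful.

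The main obstacle is the isomorphism claim of the second paragraph — that mapping a Bousfield–Segal space into a Segal space detects only the core — since the same principle reappears as the identification $v=\mathrm{Core}(w)$ in the last step. Once this is in hand, everything else is formal: the mate identity, Ken Brown's lemma for the simplicially enriched right Quillen functor $\mathrm{Hom}_2(X,\blank)$, and two-out-of-three.
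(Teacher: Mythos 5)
Your proof is correct in substance but follows a genuinely different route from the paper's. The paper first shows that $\alpha_X$ is a $B$-equivalence for \emph{every} bisimplicial set $X$ (checking on representables via Proposition~\ref{freegrpdinv} and passing to homotopy colimits of representables), then proves as its key technical step that any $S$-fibrant replacement $X\sprime$ of $(k\bBox\mathrm{id})_!(X)$ is automatically a Bousfield--Segal space: every generating edge of $(k\bBox\mathrm{id})_!(X)$ maps to a homotopy equivalence, so $r$ factors through $\mathrm{Core}(X\sprime)$, and lifting against the fibration $\mathrm{Core}(X\sprime)\hookrightarrow X\sprime$ forces $\mathrm{Core}(X\sprime)=X\sprime$; both the colocalization and the $S$-equivalence statement for $\alpha$ then fall out of 3-for-2 in the same square you draw. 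You instead prove the $\alpha$-statement first, via the enriched adjunction and the observation that $\mathrm{Hom}_2(X,j_W)$ is an \emph{isomorphism} when $X$ is a Bousfield--Segal space --- a clean categorical fact (maps out of a Bousfield--Segal space only see the core) that is implicit but never isolated in the paper --- and then deduce the colocalization by identifying $v$ with $\mathrm{Core}(w)$. Your route avoids having to know that the fibrant replacement $Z$ is itself a Bousfield--Segal space, at the price of needing that $\mathrm{Core}$ preserves the relevant equivalences; the paper's route yields the stronger intermediate fact that $\alpha$ is a pointwise $B$-equivalence on all of $s\mathbf{S}$.

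One imprecision to repair in your last paragraph: you invoke ``$\mathrm{Core}$ sends Dwyer--Kan equivalences to Dwyer--Kan equivalences'' to conclude that $v$ is an $S$-equivalence, but a Dwyer--Kan equivalence between Segal spaces is in general only a $CS$-equivalence, not an $S$-equivalence, so that implication as literally stated would not close the argument. What you actually have is stronger: $w=r\circ\alpha_X$ is an $S$-equivalence between $S$-fibrant objects and hence a levelwise weak equivalence, and $\mathrm{Core}$ preserves levelwise weak equivalences between Segal spaces --- your parenthetical is essentially the proof, since $\mathrm{Core}(\blank)_1$ is the union of the equivalence components of $(\blank)_1$, which $w_1$ matches up on $\pi_0$ because $\mathrm{Ho}(w)$ is an equivalence of categories, and the higher columns follow via the cartesian squares over the Segal maps. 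With ``levelwise'' in place of ``Dwyer--Kan'' the 2-out-of-3 step goes through and the proof is complete.
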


\begin{proof}
We divide the proof into four parts. First, consider the natural transformation
\[\alpha\colon \mathrm{id}\rightarrow (k\bBox\mathrm{id})_!\]
induced by the inclusions $[n]\rightarrow I[n]$, dual to the transformation $\beta$ from Proposition~\ref{propBScoloc}. 
Its restriction $\bar{\alpha}\colon y\Rightarrow(k\bBox\mathrm{id})$ in the functor category
$(s\mathbf{S})^{\Delta\times\Delta}$ is a pointwise weak equivalence in $(s\mathbf{S},\mathrm{B})$. That is because 
for every pair $n,m\geq 0$ and every Bousfield-Segal space $X$, the map
$\mathrm{Hom}_2(\alpha([n],[m]),X)$ is isomorphic to the fibration
$((\Delta^n\rightarrow I\Delta^n)\setminus X)^{\Delta^m}$ and hence acyclic by Proposition~\ref{freegrpdinv}. It follows 
that the natural transformation $\alpha\colon y_!\rightarrow (k\bBox\mathrm{id})_!$ between left Kan extensions
is also a pointwise weak equivalence in $(s\mathbf{S},\mathrm{B})$. This can be seen for example using that every 
bisimplicial set is the ``canonical'' homotopy colimit of representables (\cite[Proposition 2.9]{duggerunivhtytheories}) 
and that the left Quillen functor $(k\bBox\mathrm{id})_!$ preserves homotopy colimits.

Second, given a bisimplicial set $X$, let
\[r\colon (k\bBox\mathrm{id})_!(X)\rightarrow X\sprime\]
be a fibrant replacement in $(s\mathbf{S},\mathrm{S})$. Note that by the definition of the left adjoint
$(k\bBox\mathrm{id})_!$, every $f\in (k\bBox\mathrm{id})_!(X)_{1}$ is mapped to a homotopy equivalence in the 
Segal space $X\sprime$, and so the map $r$ factors through $\mathrm{Core}(X\sprime)$.
The canonical inclusion $\iota\colon\mathrm{Core}(X\sprime)\hookrightarrow X\sprime$ is a Reedy fibration between Segal 
spaces by its definition via Diagram (\ref{diaglemmacorestrict1}) and the fact that the inclusion
$X_{\mathrm{hoequiv}}\rightarrow X_1$ is a Kan fibration. As such it is a fibration in $(s\mathbf{S},\mathrm{S})$ as 
well. Hence, we obtain a lift to the square
\[\xymatrix{
(k\bBox\mathrm{id})_!(X)\ar@{^(->}[d]_r\ar[r]^{r}& \mathrm{Core}(X\sprime)\ar@{->>}[d]^{\iota}\\
X\sprime\ar@{=}[r] & X\sprime
}\]
which shows that $\mathrm{Core}(X\sprime)=X\sprime$. This means that $X\sprime$ is a Bousfield-Segal space.

Third, in a similar way to the proof of \cite[Proposition 6.27]{joyalqcatslecture}, let $X$ be a bisimplicial set and 
consider the following commutative diagram.
\[\xymatrix{
X\ar[dr]_{\alpha_X}\ar[r]^(.3){\eta} & (k\bBox\mathrm{id})^!(k\bBox\mathrm{id})_!(X)\ar[rr]^{(k\bBox\mathrm{id})^!(r)}\ar[d]^{\beta_{(k\bBox\mathrm{id})_!(X)}} & & (k\bBox\mathrm{id})^!(X\sprime)\ar[d]^{\beta_{X\sprime}} \\
& (k\bBox\mathrm{id})_!(X)\ar[rr]_{r} & & X\sprime
}\]
To prove that the pair (\ref{BScoloc}) is a homotopy colocalization, we need to show that the top composition of arrows 
is a weak equivalence in $(s\mathbf{S},\mathrm{B})$. In the first step of the proof, we have seen that the map
$\alpha_X$ is a weak equivalence in $(s\mathbf{S},\mathrm{B})$. The same goes for the bottom map $r$ since
$(s\mathbf{S},\mathrm{B})$ is a left Bousfield localization of $(s\mathbf{S},\mathrm{S})$. The vertical map
$\beta_{X\sprime}$ is a Reedy weak equivalence by Proposition~\ref{propBScoloc}, since
$X\sprime=\mathrm{Core}(X\sprime)$ as shown in the second step. Thus, the top horizontal map is a weak equivalence in
$(s\mathbf{S},\mathrm{B})$ by 3-for-2.

Lastly, the fact that $\alpha_X$ is a weak equivalence in $(s\mathbf{S},\mathrm{S})$ whenever $X$ is a Bousfield-Segal 
space follows from a 3-for-2 argument as well. Indeed, the composite
\[X\xrightarrow{\alpha_X}(k\bBox\mathrm{id})_!(X)\xrightarrow{r} X\sprime\]
is a weak equivalence in $(\mathbf{S},\mathrm{B})$ since both of its components are. When $X$ is a Bousfield-Segal 
space, this composite is a weak equivalence between $B$-local objects and hence a weak equivalence in
$(s\mathbf{S},\mathrm{S})$ as well. Since the map $r$ is a weak equivalence in $(s\mathbf{S},\mathrm{S})$ by assumption,  
so is $\alpha_X$. This finishes the proof.
\end{proof}

\section{Complete Bousfield-Segal spaces}\label{chcbs}

In this section we study the model structure for Bousfield-Segal spaces which satisfy Rezk's completeness 
condition. We show that they are exactly the Reedy fibrant bisimplicial sets which are homotopically constant, and hence 
appear in various forms in the existing literature. We will deduce that the model structure $(s\mathbf{S},\mathrm{CB})$ 
for complete Bousfield-Segal spaces is Quillen equivalent to the standard Quillen model structure for Kan complexes.

The model structure $(s\mathbf{S},\mathrm{S})$ for Segal spaces can be localized at the set
\[\mathrm{C}:=\{p_1^{\ast}(\{0\})\colon p_1^{\ast}(\Delta^0)\rightarrow p_1^{\ast}(I\Delta^1)\}.\]
This defines the model structure $(s\mathbf{S},\mathrm{CS}):=\mathcal{L}_{\mathrm{C}}(s\mathbf{S},\mathrm{S})$ which 
originally was presented in \cite{rezk} and is further studied in \cite[Section 4]{jtqcatvsss}. Its fibrant objects are 
the \emph{complete} Segal spaces, i.e.\ the Segal spaces $X$ such that the map
\[\{0\}\setminus X\colon I\Delta^1\setminus X\twoheadrightarrow X_0\]
is an acyclic fibration.
In other words, a Segal space $X$ is complete whenever we may identify vertical homotopies in $X_0$ with horizontal 
equivalences in $X$. This defining identification implies that $(s\mathbf{S},\mathrm{CS})$ is also the localization of
$(s\mathbf{S},\mathrm{S})$ at the class of homotopically fully faithful and essentially surjective maps
(the \emph{Dwyer Kan equivalences}). This is \cite[Corollary 7.9]{rezkhtytps}.
Joyal and Tierney show in \cite{jtqcatvsss} that the homotopy theory of complete Segal spaces is equivalent to the 
homotopy theory of $(\infty,1)$-categories. This justifies to think of complete Segal spaces as the
$(\infty,1)$-category objects in the $(\infty,1)$-category of spaces.

Analogously, localizing $(s\mathbf{S},\mathrm{B})$ at the set C yields the simplicial, left-proper and 
combinatorial model category
\[(s\mathbf{S},\mathrm{CB}):=\mathcal{L}_C(s\mathbf{S},\mathrm{B})=\mathcal{L}_{\mathrm{B}}(s\mathbf{S},\mathrm{CS}).\]

\begin{definition}\label{CBS-spaces}
We say that $X\in s\mathbf{S}$ is a complete Bousfield-Segal space if $X$ is a $B$-local complete Segal space. In other 
words, complete Bousfield-Segal spaces are exactly the fibrant objects in the model category
$(s\mathbf{S},\mathrm{CB})$.
\end{definition}

\begin{remark}
The genealogy of the model structures on $s\mathbf{S}$ that we have considered can be depicted by the following diagram, 
where the arrows indicate the direction of the left Quillen functor.
\[
\xymatrix{
R_v\ar[rr]\ar[d] & & S\ar[d]\ar[dr] & \\
B\ar@{=}[rr]\ar[dr] & & \mathcal{L}_B (S)\ar[dr]& CS\ar[d] \\
 & CB\ar@{=}[rr] & &  \mathcal{L}_B(CS) 
}
\]
The horizontal equality in the middle of the diagram was shown in Theorem~\ref{B=BS}. The bottom equality is an 
immediate corollary. The reader may note however that there are direct and concise proofs of the bottom 
equality which do not require an understanding of the model structure for (non-complete) Bousfield-Segal spaces. For 
example, it can be deduced from the next lemma.
\end{remark}

By \cite[Proposition 2.8]{jtqcatvsss}, every $v$-fibrant bisimplicial set $X$ is \emph{categorically constant} in that 
every map $f\colon [m]\rightarrow [n]$ in $\Delta$ induces a categorical equivalence
$X_{\bullet f}\colon X_{\bullet n}\rightarrow X_{\bullet m}$ between the two rows. In the following we show that a
$v$-fibrant $X$ is a complete Bousfield-Segal space if and only if it is homotopically constant in the vertical 
direction as well.

\begin{definition}
A bisimplicial set $X$ is \emph{homotopically constant} if the map
$X(f)\colon X_{n}\rightarrow X_m$ is a weak homotopy equivalence for every function $f\colon [m]\rightarrow [n]$ in
$\Delta$.
\end{definition}

\begin{lemma}\label{lemmacbs=lc}
A $v$-fibrant bisimplicial set $X$ is a complete Bousfield-Segal space if and only if $X$ is homotopically constant.
\end{lemma}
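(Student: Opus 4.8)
The plan is to reduce both implications to an analysis in vertical degrees $0$ and $1$, and then to propagate upward using the Segal condition. Throughout I will use that $v$-fibrancy of $X$ means exactly that $\blank\setminus X\colon\mathbf{S}^{op}\rightarrow\mathbf{S}$ sends every monomorphism to a Kan fibration (a standard consequence of Proposition~\ref{boxdividadj} and the generating set $\mathcal{J}_v$, applied to $X\rightarrow\ast$); in particular all columns $X_n$ are Kan complexes, and each vertex- or Segal- or Bousfield-map is a Kan fibration, so that for such maps ``weak homotopy equivalence'' and ``acyclic fibration'' coincide, and I may freely use that acyclic fibrations are stable under base change. The whole argument hinges on the claim that $X$ is a complete Bousfield-Segal space if and only if the two faces $d_0,d_1\colon X_1\rightarrow X_0$ and the degeneracy $s_0\colon X_0\rightarrow X_1$ are weak equivalences.

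For the direction ``homotopically constant $\Rightarrow$ complete Bousfield-Segal'', I start from the hypothesis that \emph{all} column maps are weak equivalences, so in particular $d_0,d_1,s_0$ are acyclic fibrations. First I would verify the Segal condition: the codomain $X_1\times^S_{X_0}\dots\times^S_{X_0}X_1$ of $\xi_n$ is obtained from $X_1$ by iterated pullback along the acyclic fibrations $d_0,d_1$, so each projection onto a factor $X_1$ is again an acyclic fibration; composing $\xi_n$ with such a projection and a face $X_1\rightarrow X_0$ recovers the vertex-induced map $X_n\rightarrow X_0$, which is a weak equivalence by hypothesis, whence $\xi_n$ is a weak equivalence by 3-for-2 and thus an acyclic fibration. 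So $X$ is a Segal space. Next, since $s_0$ is a weak equivalence of Kan complexes it is surjective on $\pi_0$, so every path component of $X_1$ contains a degenerate edge $s_0(x)=1_x$; as the homotopy equivalences $X_{\mathrm{hoequiv}}$ form a union of path components of $X_1$ (see \cite{rezk}) and identities are equivalences, this forces $X_{\mathrm{hoequiv}}=X_1$, so $\mathrm{Ho}(X)$ is a groupoid and $X$ is a Bousfield-Segal space by Corollary~\ref{Charvia2}. Finally, completeness follows from Corollary~\ref{rezktechnical}: the map $e\setminus X\colon I\Delta^1\setminus X\rightarrow X_1$ is acyclic, and $\{0\}\setminus X$ factors as $e\setminus X$ followed by a face $X_1\rightarrow X_0$ that is a weak equivalence by hypothesis; being also a fibration, $\{0\}\setminus X$ is an acyclic fibration. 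Hence $X$ is a $B$-local complete Segal space, i.e.\ a complete Bousfield-Segal space.

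For the converse, assume $X$ is a complete Bousfield-Segal space. By Corollary~\ref{rezktechnical} the map $e\setminus X\colon I\Delta^1\setminus X\rightarrow X_1$ is acyclic, and by completeness $\{0\}\setminus X\colon I\Delta^1\setminus X\rightarrow X_0$ is acyclic; since the latter factors through the former via a face $X_1\rightarrow X_0$, 3-for-2 shows this face, and hence (using the identities $d_is_0=\mathrm{id}$) the other face and $s_0$ too, are weak equivalences. Because $X$ is a Segal space its Segal maps are already acyclic fibrations, so the same iterated-pullback computation as above identifies every vertex-induced map $X_n\rightarrow X_0$ as a weak equivalence. It then remains to deduce that every column map $X(f)\colon X_n\rightarrow X_m$ is a weak equivalence; factoring $f$ in $\Delta$ as a split epimorphism after a monomorphism and using that vertex-induced maps are weak equivalences, this is a routine sequence of 3-for-2 applications, yielding homotopic constancy.

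The routine bookkeeping (the fiber-product identifications, the simplicial identities, and the epi-mono factorization) is straightforward, so I expect the only genuinely delicate point to be the step $X_{\mathrm{hoequiv}}=X_1$ in the first implication: it is here that the purely ``vertical'' hypothesis of homotopic constancy is converted into the ``horizontal'' statement that every edge is an equivalence, and this conversion relies essentially on Rezk's fact that $X_{\mathrm{hoequiv}}$ is closed under the path-component relation together with the $\pi_0$-surjectivity of $s_0$. Once this bridge between the two directions of $s\mathbf{S}$ is in place, both implications close by formal 3-for-2 arguments.
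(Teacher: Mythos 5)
Your argument is correct, and its backbone --- everything reduces to showing that $d_0,d_1\colon X_1\to X_0$ and $s_0$ are weak equivalences, after which one propagates through the iterated fibre products by 3-for-2 --- is exactly the paper's. Your ``complete $\Rightarrow$ constant'' direction is essentially identical to the paper's (the paper runs the pullback computation through the Bousfield fibre products $X_1\times^B_{X_0}\cdots\times^B_{X_0}X_1$ rather than the Segal ones, an immaterial difference). Where you genuinely diverge is in ``constant $\Rightarrow$ complete Bousfield-Segal'': the paper never passes through the Segal condition at all. It observes that when $d_1$ is acyclic, both $X_n$ and the Bousfield fibre product are contractible over $X_0$ (using right properness of $(\mathbf{S},\mathrm{Kan})$ for the pullbacks), so the Bousfield maps are equivalences over $X_0$ by 3-for-2, verifying Definition~\ref{defbspace} directly; completeness then falls out since $\{0\}\setminus X$ is identified up to equivalence with $d_1$. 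You instead first verify the Segal condition, then prove $X_{\mathrm{hoequiv}}=X_1$ via $\pi_0$-surjectivity of $s_0$ and Rezk's fact that $X_{\mathrm{hoequiv}}$ is a union of components of $X_1$, and conclude via Corollary~\ref{Charvia2} and Corollary~\ref{rezktechnical}. This is valid and not circular (those corollaries do not depend on this lemma), and it has the merit of making explicit how the vertical hypothesis forces every edge to be invertible; the cost is that it imports the machinery of Sections 4--5 and Rezk's Lemma 5.8, where the paper's version is a self-contained two-line 3-for-2 argument from the definition. Your closing reduction of general column maps to vertex maps can even be simplified: for any $f\colon[m]\to[n]$ just precompose with a vertex $[0]\to[m]$ and apply 3-for-2, with no need for the epi--mono factorization.
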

\begin{proof}
Clearly, $X$ is homotopically constant if and only if all boundary and degeneracy maps of $X$ are weak homotopy
equivalences. This in turn holds if and only if all boundary maps of $X$ are weak homotopy equivalences since 
the degeneracies are sections of the boundaries.

If $X$ is homotopically constant, all the pullbacks $X_1\times_{X_0}^B\dots\times_{X_0}^B X_1$ are contractible over 
$X_0$ via the projection to the vertex $[0]$ (using right properness of $(\mathbf{S},\mathrm{Kan})$). The same holds for 
the $X_n$, and so the Bousfield maps are weak homotopy equivalences over $X_0$ by 3-for-2. It follows that
$(\{0\}\colon\Delta^0\rightarrow I\Delta^1)\setminus X\cong d_1$, 
which is a weak homotopy equivalence by assumption as well. Thus $X$ is complete. 

Vice versa, if $X$ is a complete Bousfield-Segal space, the acyclic fibration
$I\Delta^1\setminus X\twoheadrightarrow X_0$ factors through the fibrations
$I\Delta^1\setminus X\twoheadrightarrow X_1\twoheadrightarrow X_0$. The first of these two is acyclic by
Corollary~\ref{rezktechnical}, and so is the boundary $d_1\colon X_1\twoheadrightarrow X_0$ by 3-for-2. It follows that 
$d_0\colon X_1\twoheadrightarrow X_0$ is a weak homotopy equivalence, too, because $s_0$ is a mutual section. 
Consequently all the pullbacks $X_1\times_{X_0}^B\dots\times_{X_0}^B X_1$ are homotopy equivalent to one another via the 
projections. Since the Bousfield maps are weak homotopy equivalences by assumption, it follows that 
the boundaries of $X$ are weak homotopy equivalences again by a repeated application of 3-for-2.
\end{proof}

\begin{corollary}
The model structure $(s\mathbf{S},\mathrm{CB})$ is cartesian closed, i.e.\ it is a monoidal model category with respect 
to the cartesian product.
\end{corollary}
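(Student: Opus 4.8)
The plan is to verify the pushout-product and unit axioms for the Cartesian monoidal structure on $(s\mathbf{S},\mathrm{CB})$, organizing the whole argument around one nontrivial point: that the fibrant objects of $(s\mathbf{S},\mathrm{CB})$ form an exponential ideal, i.e.\ are closed under the internal hom $Z\mapsto Z^{K}$ for cofibrant $K$. Recall throughout that $(s\mathbf{S},\mathrm{CB})$ is a left Bousfield localization of $(s\mathbf{S},R_v)$, so its cofibrations coincide with those of $R_v$.

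First I would record the formal facts. The cofibrations of $(s\mathbf{S},\mathrm{CB})$ are exactly the monomorphisms (see \eqref{equreedygencofs}), so every object is cofibrant and the Cartesian unit $\Delta^0\bBox\Delta^0$ is cofibrant; this settles the unit axiom. Since $s\mathbf{S}$ is a presheaf topos, the Cartesian pushout-product of two monomorphisms is again a monomorphism, so the pushout-product of two cofibrations is a cofibration. I would also note that $(s\mathbf{S},R_v)$ is itself a Cartesian model category: the $n$-th column functor $\Delta^n\setminus(-)$ preserves products and pushouts, hence carries a pushout-product $f\times'g$ columnwise to $f_n\times'g_n$, and the claim follows from Cartesian closedness of $(\mathbf{S},\mathrm{Kan})$ together with the fact that $\mathcal{W}_v$ is detected columnwise. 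In particular $(-)^{K}$ is a right Quillen endofunctor of $(s\mathbf{S},R_v)$ for every cofibrant $K$.

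The heart of the argument is the exponential-ideal claim: if $Z$ is $v$-fibrant and homotopically constant and $K$ is cofibrant, then $Z^{K}$ is again $v$-fibrant and homotopically constant, hence a complete Bousfield-Segal space by Lemma~\ref{lemmacbs=lc}. Its $v$-fibrancy is immediate from the previous paragraph. For homotopical constancy I would use the counit $\epsilon_Z\colon p_2^{\ast}(Z_0)\to Z$ of the adjunction $p_2^{\ast}\dashv\iota_2^{\ast}$: on the $n$-th column it is the structure map $Z_0\to Z_n$, so $\epsilon_Z$ is precisely a columnwise (hence $R_v$-) weak equivalence between $v$-fibrant objects exactly because $Z$ is homotopically constant. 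Applying the right Quillen functor $(-)^{K}$ and Ken Brown's lemma, $(\epsilon_Z)^{K}\colon (p_2^{\ast}(Z_0))^{K}\to Z^{K}$ is again an $R_v$-weak equivalence. Finally I would compute $(p_2^{\ast}A)^{K}$ directly: using $\Delta^n\bBox\Delta^m\times\Delta^p\bBox\Delta^q\cong(\Delta^n\times\Delta^p)\bBox(\Delta^m\times\Delta^q)$ together with the fact that the columns of $p_2^{\ast}A$ form the constant simplicial object at $A$ (so that $U\setminus p_2^{\ast}A\cong A$ for connected $U$), one gets $(p_2^{\ast}A)^{\Delta^p\bBox\Delta^q}\cong p_2^{\ast}(A^{\Delta^q})$; writing a general cofibrant $K$ as a colimit of representables turns the exponential into a limit, which the right adjoint $p_2^{\ast}$ preserves, so $(p_2^{\ast}A)^{K}\cong p_2^{\ast}(L)$ for some $L\in\mathbf{S}$ and is manifestly homotopically constant. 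Since homotopical constancy is a condition on the columns, it is invariant under $R_v$-weak equivalence, and $Z^{K}$ is therefore homotopically constant as well.

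With the exponential-ideal claim in hand I would close the pushout-product axiom. Let $f$ be a cofibration and $g$ a trivial cofibration in $(s\mathbf{S},\mathrm{CB})$; I must show $f\times'g$ is a $\mathrm{CB}$-weak equivalence, knowing already that it is a cofibration. Because $(s\mathbf{S},\mathrm{CB})$ is simplicial (Proposition~\ref{prophom2} and stability of the simplicial enrichment under localization) and the Cartesian product is compatible with $\mathrm{Hom}_2$ via $\mathrm{Hom}_2(A\times B,Z)\cong\mathrm{Hom}_2(A,Z^{B})$, the adjunction converts the pushout-product into a pullback-power: $\mathrm{Hom}_2(f\times'g,Z)\cong\mathrm{Hom}_2(f,\langle g\mid Z\rangle)$, where $\langle g\mid Z\rangle\colon Z^{\operatorname{cod}g}\to Z^{\operatorname{dom}g}$. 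For $Z$ a complete Bousfield-Segal space the exponential-ideal claim makes both endpoints $\mathrm{CB}$-fibrant, Cartesian closedness of $R_v$ makes $\langle g\mid Z\rangle$ an $R_v$-fibration, and a fibration between $\mathrm{CB}$-fibrant objects is a $\mathrm{CB}$-fibration (\cite[Proposition 3.3.16]{hirschhorn03}). Hence $\mathrm{Hom}_2(f,\langle g\mid Z\rangle)$ is a trivial Kan fibration for every such $Z$, so $f\times'g$ is a $\mathrm{CB}$-weak equivalence; by symmetry the same holds with the roles of $f$ and $g$ exchanged. The main obstacle is exactly the exponential-ideal computation of the previous paragraph; everything else is formal.
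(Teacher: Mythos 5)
Your overall strategy differs from the paper's: the paper invokes \cite[Proposition 9.2]{rezk} to reduce cartesian closure to the single check that $X^{p_1^{\ast}\Delta^1}$ is again fibrant, and then verifies this by computing columns, $\Delta^n\setminus X^{p_1^{\ast}\Delta^1}\cong(\Delta^1\times\Delta^n)\setminus X$, and applying \cite[Lemma 3.7]{jtqcatvsss}. You instead attempt the full pushout-product axiom, which forces you to prove the stronger exponential-ideal statement that $Z^K$ is $\mathrm{CB}$-fibrant for \emph{arbitrary} $K$. That is where the proof breaks. You apply Ken Brown's lemma to $(\epsilon_Z)^K$ for the counit $\epsilon_Z\colon p_2^{\ast}(Z_0)\rightarrow Z$, asserting it is a weak equivalence ``between $v$-fibrant objects.'' But $p_2^{\ast}(Z_0)$ is not $v$-fibrant: its first matching map is the diagonal $Z_0\rightarrow Z_0\times Z_0$, which is essentially never a Kan fibration (equivalently, $Z_0$ has no right lifting property against $(p_2)_!(\delta_1\bBox\sprime h^m_i)$, which is the fold map $\Delta^m\cup_{\Lambda^m_i}\Delta^m\rightarrow\Delta^m$). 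A right Quillen functor such as $(-)^K$ is only guaranteed to preserve weak equivalences between \emph{fibrant} objects, so the conclusion that $(\epsilon_Z)^K$ is an $R_v$-weak equivalence is unjustified. Replacing $p_2^{\ast}(Z_0)$ by a Reedy fibrant replacement repairs the Ken Brown step but destroys the explicit identification $(p_2^{\ast}A)^K\cong p_2^{\ast}(L)$ on which your homotopical-constancy argument rests, so the gap is not cosmetic.

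There is a second, smaller lapse in the closing paragraph: after establishing that $\langle g\mid Z\rangle\colon Z^{\operatorname{cod}g}\rightarrow Z^{\operatorname{dom}g}$ is a fibration between $\mathrm{CB}$-fibrant objects, you conclude ``hence $\mathrm{Hom}_2(f,\langle g\mid Z\rangle)$ is a trivial Kan fibration'' without ever using that $g$ is a \emph{trivial} cofibration. The missing step is that $\mathrm{Hom}_2(g,Z)=\iota_2^{\ast}\langle g\mid Z\rangle$ is acyclic by SM7 for the localized simplicial structure, and that homotopical constancy of both exponentials then upgrades this to a columnwise (hence $R_v$-) equivalence; this is fillable, but it depends on the exponential-ideal claim whose proof is the broken step above. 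The remaining formal parts (unit axiom, cofibration half of the pushout-product axiom, cartesianness of $(s\mathbf{S},R_v)$, and the computation $(p_2^{\ast}A)^{\Delta^p\bBox\Delta^q}\cong p_2^{\ast}(A^{\Delta^q})$) are correct.
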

\begin{proof}
By \cite[Proposition 9.2]{rezk} it suffices to show that the exponential $X^{p_1^{\ast}\Delta^1}$ is a complete 
Bousfield-Segal space whenever $X$ is such. Since $X^{p_1^{\ast}\Delta^1}$ is again Reedy fibrant, by
Lemma \ref{lemmacbs=lc} this means that we have to show that $X^{p_1^{\ast}\Delta^1}$ is homotopically constant. So let 
$d^i\colon [n]\rightarrow [n+1]$ be a coboundary inclusion and let $X$ be a complete Bousfield-Segal space. We want show 
that the boundary $X^{p_1^{\ast}\Delta^1}(d^i)$ is a weak equivalence.
There are natural isomorphisms
\[(X^{p_1^{\ast}(\Delta^1)})_n\cong \Delta^n\setminus X^{p_1^{\ast}(\Delta^1)}\cong(\Delta^1\times\Delta^n)\setminus X\]
and $X^{p_1^{\ast}(\Delta^1)}(d^i)\cong (\mathrm{id}_{\Delta^1}\times d^i)\setminus X$.
By assumption $X$ is homotopically constant and it follows that $\blank\setminus X$ takes all anodyne maps to trivial 
fibrations by \cite[Lemma 3.7]{jtqcatvsss}.  Thus, $(\mathrm{id}_{\Delta^1}\times d^i)\setminus X$ is an acyclic 
fibration.
\end{proof}

From Lemma~\ref{lemmacbs=lc} it follows that the model structure $(s\mathbf{S},\mathrm{CB})$ is contained in various 
classes of well understood model structures studied in the literature. 

In \cite{rss} for instance, given a model category $\mathbb{M}$, the model structure on $\mathbb{M}^{\Delta^{op}}$ 
whose fibrant objects are exactly the homotopically constant Reedy fibrant simplicial objects is called the 
\emph{canonical model structure} on $\mathbb{M}^{{\Delta}^{op}}$ (whenever it exists). In the case
$\mathbb{M}=(\mathbf{S},\mathrm{Kan})$, this implies that the projection
$\iota_2^{\ast}\colon(s\mathbf{S},\mathrm{CB})\rightarrow(\mathbf{S},\mathrm{Kan})$ 
onto the first column is part of a Quillen equivalence by \cite[Theorem 3.6]{rss}. The fact that the projection
$\iota_2^{\ast}$ is part of a Quillen equivalence was stated in Bergner's paper (\cite[Theorem 6.12]{bergner2}) as well, 
albeit its proof was omitted. 

By a symmetry inherent to the model structure $(s\mathbf{S},\mathrm{CB})$, this implies that the ``perpendicular'' 
projection $\iota_1^{\ast}\colon(s\mathbf{S},\mathrm{CB})\rightarrow(\mathbf{S},\mathrm{Kan})$ onto the first row is a 
Quillen equivalence as well, as the following theorem shows.

\begin{theorem}\label{cnstequiv2}
The pair
\[(p_1^{\ast},\iota_1^{\ast})\colon(\mathbf{S},\mathrm{Kan})\rightarrow(s\mathbf{S},\mathrm{CB})\]
is a Quillen equivalence.
\end{theorem}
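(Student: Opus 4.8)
The plan is to deduce this from the companion Quillen equivalence $(p_2^{\ast},\iota_2^{\ast})\colon(\mathbf{S},\mathrm{Kan})\to(s\mathbf{S},\mathrm{CB})$ recalled just above (\cite[Theorem 3.6]{rss}) by conjugating with the transposition $\sigma^{\ast}$. First I would record the identities $p_1^{\ast}=\sigma^{\ast}\circ p_2^{\ast}$ and $\iota_1^{\ast}=\iota_2^{\ast}\circ\sigma^{\ast}$. Both are immediate by unwinding the definitions: since $\sigma^{\ast}$ interchanges rows and columns, $\sigma^{\ast}(p_2^{\ast}A)$ is the bisimplicial set all of whose rows equal $A$, i.e. $p_1^{\ast}A$, while $\iota_2^{\ast}(\sigma^{\ast}X)=(\sigma^{\ast}X)_{0\bullet}=X_{\bullet 0}=\iota_1^{\ast}X$. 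As $\sigma$ is an involution, $\sigma^{\ast}$ is its own inverse and its own adjoint, so these identities present $(p_1^{\ast},\iota_1^{\ast})$ as the composite of the adjunction $(p_2^{\ast},\iota_2^{\ast})$ with $(\sigma^{\ast},\sigma^{\ast})$. Since composites of Quillen equivalences are Quillen equivalences, it suffices to prove that $\sigma^{\ast}$ is a Quillen self-equivalence of $(s\mathbf{S},\mathrm{CB})$.

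Because $\sigma^{\ast}$ is an involutive isomorphism of the underlying category, I only need it to preserve cofibrations and weak equivalences; it will then biject the acyclic cofibrations, the fibrations and the fibrant objects, and—preserving and reflecting all weak equivalences—be a Quillen equivalence. The cofibrations of $\mathrm{CB}$ are the monomorphisms (it is a left Bousfield localization of $R_v$), and an isomorphism of categories preserves monomorphisms. For the weak equivalences I would use that, by Lemma~\ref{lemmacbs=lc}, the fibrant objects of $(s\mathbf{S},\mathrm{CB})$ are exactly the homotopically constant $v$-fibrant bisimplicial sets, so that $(s\mathbf{S},\mathrm{CB})$ coincides with the canonical model structure of \cite{rss} on $\mathbf{S}^{\Delta^{op}}$; its weak equivalences are precisely the maps $f$ whose diagonal $\mathrm{diag}(f)$ is a weak homotopy equivalence. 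Since $(\sigma^{\ast}X)_{nn}=X_{nn}$ we have $\mathrm{diag}\circ\sigma^{\ast}=\mathrm{diag}$, and hence $\sigma^{\ast}$ both preserves and reflects these weak equivalences.

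It then follows that $\sigma^{\ast}$ is a left Quillen self-equivalence of $(s\mathbf{S},\mathrm{CB})$, and composing it with $(p_2^{\ast},\iota_2^{\ast})$ gives that $(p_1^{\ast},\iota_1^{\ast})$ is a Quillen equivalence. As a by-product, since $\sigma^{\ast}$ then also preserves fibrant objects, the perpendicular projection $\iota_1^{\ast}=\iota_2^{\ast}\circ\sigma^{\ast}$ automatically sends every complete Bousfield-Segal space to a Kan complex, which is not obvious a priori. The one substantive point—and the step I expect to be the crux—is the \emph{symmetric} description of the weak equivalences of $\mathrm{CB}$ as the diagonal weak equivalences, which is exactly what turns $\sigma^{\ast}$ into a homotopical functor. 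I would secure it by identifying $(s\mathbf{S},\mathrm{CB})$ with the canonical model structure via Lemma~\ref{lemmacbs=lc}, using that a left Bousfield localization of $R_v$ is determined by its class of fibrant objects and that the realization of a bisimplicial set computes its diagonal; everything else in the argument is formal.
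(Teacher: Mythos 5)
Your proposal is correct and follows the same overall route as the paper: both factor $\iota_1^{\ast}=\iota_2^{\ast}\circ\sigma^{\ast}$ and reduce the claim to $\sigma^{\ast}$ being a Quillen self-equivalence of $(s\mathbf{S},\mathrm{CB})$, then compose with the column-projection equivalence from \cite[Theorem 3.6]{rss}. Where you differ is in how you certify that $\sigma^{\ast}$ is homotopical. The paper does this structurally: it checks that $\sigma^{\ast}$ carries the generating (acyclic) cofibrations of $R_v$ to those of $R_h$ and the localizing set $\mathrm{C}\cup\mathrm{B}$ to a transposed set $\mathrm{CB}^{\perp}$, uses the $\mathrm{Hom}_1$/$\mathrm{Hom}_2$ identity to match the local objects, and concludes that $\mathcal{L}_{\mathrm{CB}^{\perp}}(s\mathbf{S},R_h)$ and $(s\mathbf{S},\mathrm{CB})$ coincide because they share cofibrations and fibrant objects. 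You instead appeal to the symmetric description of the weak equivalences of $\mathrm{CB}$ as the diagonal weak equivalences, via the identification with the realization model structure of \cite{rss}; since $\mathrm{diag}\circ\sigma^{\ast}=\mathrm{diag}$, invariance is immediate. Your argument is shorter but leans on an external input (that the realization weak equivalences of a bisimplicial set are detected by the diagonal) which the paper only implicitly invokes; note that you cannot instead cite Theorem~\ref{diagcbs1} here, as that is deduced \emph{from} the present theorem. The paper's more structural detour buys something you lose: the observation that $(s\mathbf{S},\mathrm{CB})$ is simultaneously a left Bousfield localization of $R_v$ and of $R_h$ (Remark~\ref{rembshfib}), which is reused later in the proof of right properness (Theorem~\ref{rightproper}). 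Both arguments are sound.
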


\begin{proof}
Recall the involution $\sigma^{\ast}\colon s\mathbf{S}\rightarrow s\mathbf{S}$ 
induced by the permutation $\sigma\colon\Delta\times\Delta\rightarrow\Delta\times\Delta$ which swaps the components
$([n],[m])\mapsto([m],[n])$. Following the notation from Section~\ref{secreedy}, note that
$\sigma^{\ast}[\mathcal{W}_v]=\mathcal{W}_h$, $\sigma^{\ast}[\mathcal{C}_v]=\mathcal{C}_h=\mathcal{C}$ and even
$\sigma^{\ast}[\mathcal{I}_v]=\mathcal{I}_h$ and $\sigma^{\ast}[\mathcal{J}_v]=\mathcal{J}_h$ since
$\sigma^{\ast}$ preserves colimits. Furthermore, for all objects $X,Y\in s\mathbf{S}$ the involution satisfies the 
following identities.
\begin{align}\label{equcnstequiv2}
\mathrm{Hom}_2(\sigma^{\ast}X,\sigma^{\ast}Y):=\iota_2^{\ast}((\sigma^{\ast}Y)^{\sigma^{\ast}X})=\iota_2^{\ast}\sigma^{\ast}(Y^X)=\iota_1^{\ast}(Y^X)=:\mathrm{Hom}_1(X,Y)
\end{align}
In analogy to Proposition~\ref{prophom2}, the functor $\mathrm{Hom}_1$ turns $(s\mathbf{S},R_h)$ into a simplicial 
model category. Let
\[CB^{\perp}:=\{p_2^{\ast}(\{0\})\}\cup\{p_2^{\ast}(c_{n})\mid n\geq 2\},\] 
so we can construct the Bousfield localization $\mathcal{L}_{CB^{\perp}}(s\mathbf{S},R_h)$. Via 
(\ref{equcnstequiv2}) one computes that a bisimplicial set $X$ is $h$-fibrant and $\mathrm{CB}^{\perp}$-local
(with respect to the $\mathrm{Hom}_1$-enrichment) if and only if it is $v$-fibrant and $\mathrm{C\cup B}$-local (with 
respect to the $\mathrm{Hom}_2$-enrichment). 
Then the model structures $(s\mathbf{S},\mathrm{CB})=\mathcal{L}_{C\cup B}(s\mathbf{S},\mathrm{R}_v)$ and
$\mathcal{L}_{\mathrm{CB}^{\perp}}(s\mathbf{S},R_h)$ coincide since they have the same cofibrations and the same fibrant 
objects.
Hence, the involution $(\sigma^{\ast},\sigma^{\ast})$ is a Quillen equivalence from $(s\mathbf{S},\mathrm{CB})$ to 
itself. By \cite[Theorem 3.6]{rss}, the projection
$\iota_2^{\ast}\colon(s\mathbf{S},\mathrm{CB})\rightarrow(\mathbf{S},\mathrm{Kan})$ onto the first column is a Quillen
equivalence, and so it follows that the first row projection $\iota_1^{\ast}=\iota_2^{\ast}\circ\sigma^{\ast}$ 
is part of a Quillen equivalence as well.
\end{proof}

One also can show Theorem~\ref{cnstequiv2} in another way, using that the pair
$(p_1^{\ast},\iota_1^{\ast})$ gives rise to a Quillen equivalence between $(\mathbf{S},\mathrm{QCat})$ and
$(s\mathbf{S},\mathrm{S})$  as shown in \cite[Theorem 4.11]{jtqcatvsss}. Localizing both sides at the left horn 
inclusions yields the same result (see \cite[Theorem 5.1.14]{thesis}).

\begin{theorem}\label{diagcbs1}
The diagonal $d^{\ast}\colon s\mathbf{S}\rightarrow\mathbf{S}$ is part of a Quillen equivalence
\[(d^{\ast},d_{\ast})\colon(s\mathbf{S},\mathrm{CB})\rightarrow(\mathbf{S},\mathrm{Kan}).\]
\end{theorem}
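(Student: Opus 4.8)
The plan is to deduce this Quillen equivalence from the one already established in Theorem~\ref{cnstequiv2} by a two-out-of-three argument, exploiting the identity $d^{\ast}\circ p_1^{\ast}=\mathrm{id}_{\mathbf{S}}$. Since $p_1\circ d=\mathrm{id}_{\Delta}$ (as $d([n])=([n],[n])$ and $p_1([n],[m])=[n]$), restriction along these functors gives $d^{\ast}p_1^{\ast}=(p_1 d)^{\ast}=\mathrm{id}$ on the nose. Because every object of $(s\mathbf{S},R_v)$ is cofibrant, $p_1^{\ast}$ preserves cofibrant objects, so the total left derived functors compose and $\mathbb{L}d^{\ast}\circ\mathbb{L}p_1^{\ast}\cong\mathbb{L}(\mathrm{id})=\mathrm{id}$ on homotopy categories. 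Granting that $d^{\ast}$ is left Quillen $(s\mathbf{S},\mathrm{CB})\to(\mathbf{S},\mathrm{Kan})$, the two-out-of-three property for Quillen equivalences (applied to the equivalences of homotopy categories induced by the total left derived functors, see \cite{hovey}) then forces $d^{\ast}$ to be a Quillen equivalence, since both $p_1^{\ast}$ and $d^{\ast}p_1^{\ast}=\mathrm{id}$ are.

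The real work is therefore to verify that $d^{\ast}$ is left Quillen into $(\mathbf{S},\mathrm{Kan})$. First I would check this at the level of the unlocalized structures, i.e.\ for $d^{\ast}\colon(s\mathbf{S},R_v)\to(\mathbf{S},\mathrm{Kan})$. The diagonal is injective levelwise, hence preserves the monomorphisms, which are the cofibrations; and the classical fact that the diagonal of a columnwise weak homotopy equivalence of bisimplicial sets is again a weak homotopy equivalence shows that $d^{\ast}$ carries all of $\mathcal{W}_v$ into weak equivalences, in particular the acyclic cofibrations. Thus $(d^{\ast},d_{\ast})$ is a Quillen pair for the Reedy structures.

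To promote this to the localization $(s\mathbf{S},\mathrm{CB})=\mathcal{L}_{C\cup B}(s\mathbf{S},R_v)$, I would invoke the universal property of left Bousfield localizations \cite[Theorem 3.3.20]{hirschhorn03}: it suffices to check that $\mathbb{L}d^{\ast}$ sends each localizing map in $C\cup B$ to a weak homotopy equivalence. Since every object is cofibrant, $\mathbb{L}d^{\ast}$ agrees with $d^{\ast}$ here, and the identity $d^{\ast}p_1^{\ast}=\mathrm{id}$ turns these bisimplicial maps back into maps of simplicial sets: each $p_1^{\ast}(c_n)\in B$ becomes the cone inclusion $c_n\colon C_n\to\Delta^n$, and the map in $C$ becomes $\{0\}\colon\Delta^0\to I\Delta^1$. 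Both are weak homotopy equivalences, as $C_n$ is a $1$-skeletal cone and hence contractible, while $I\Delta^1=N(I[1])$ is the nerve of a contractible groupoid. This completes the verification that $d^{\ast}$ is left Quillen, and the argument of the first paragraph then concludes the proof.

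I expect the only genuinely non-formal ingredient to be the stability of weak equivalences under the diagonal invoked in the second paragraph; once that is in hand the rest is bookkeeping with the relation $d^{\ast}p_1^{\ast}=\mathrm{id}$ and the two-out-of-three principle. One could equally run the same argument against $p_2^{\ast}$ via $p_2\circ d=\mathrm{id}_{\Delta}$ and the Quillen equivalence of \cite[Theorem 3.6]{rss}, or appeal to \cite{cisinski}; the route through Theorem~\ref{cnstequiv2} is the most economical given what is already available.
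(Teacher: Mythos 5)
Your proposal is correct and follows essentially the same route as the paper: the paper's proof is just the one-line indication to argue as in Joyal--Tierney's Theorem 4.12, using Theorem~\ref{cnstequiv2} and a 3-for-2 argument, which is exactly what you carry out via the identity $d^{\ast}p_1^{\ast}=\mathrm{id}$. Your verification that $d^{\ast}$ is left Quillen into the localization (the diagonal lemma at the Reedy level, plus checking the images of the localizing maps in $C\cup B$) correctly supplies the details the paper leaves implicit.
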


\begin{proof}
The statement can be shown along the lines of \cite[Theorem 4.12]{jtqcatvsss}, using Theorem~\ref{cnstequiv2} 
and a 3-for-2 argument. 
\end{proof}

The fact that the diagonal induces an equivalence on the homotopy categories of the two model structures is 
exactly the unpointed version of \cite[Theorem 3.1]{bousfieldnotes} for very special bisimplicial sets of type $n=0$.

\begin{remark}\label{rembshfib}
We have seen in the proof of Theorem~\ref{cnstequiv2} that $(s\mathbf{S},\mathrm{CB})$ is also a left Bousfield 
localization of the horizontal Reedy structure $(s\mathbf{S},\mathrm{R}_h)$. It thus follows that every complete 
Bousfield-Segal space is both $v$-fibrant and $h$-fibrant. Vice versa, every $h$-fibrant bisimplicial set is 
homotopically constant by \cite[Proposition 2.8]{jtqcatvsss} (or, more precisely, by its horizontal dual). Thus, by 
Lemma~\ref{lemmacbs=lc} it follows that every bisimplicial set which is simultaneously $v$-fibrant and $h$-fibrant is a 
complete Bousfield-Segal space. In this sense, the property of being a complete Bousfield-Segal space is the conjunction 
of two Reedy fibrancy conditions. We will use this in Theorem~\ref{rightproper} to give a direct proof of right 
properness of $(s\mathbf{S},\mathrm{CB})$.
\end{remark}

The model structure $(s\mathbf{S},\mathrm{CB})$ furthermore coincides with Dugger's \emph{realization} or \emph{hocolim} 
model structure on $s\mathbf{S}$ (\cite{duggersimp}) and with Cisinski's model structure for ``locally constant 
presheaves'' on $\Delta$ (\cite{cisinski}). This implies that $(s\mathbf{S},\mathrm{CB})$ yields a model for 
univalent type theory as discussed in the next section.

\section{Right properness and semantics of univalent type theory}\label{seccbshott}

In this last section, we prove right properness of $(s\mathbf{S},\mathrm{CB})$ and record that the 
model structure furthermore is ``type theoretic'' in the sense that it is a model for Homotopy Type Theory with 
univalent universes as treated in the HoTT-Book \cite{hott}.
 
Consequently, complete Bousfield-Segal spaces and their fibrations can be studied formally in the syntax of their 
underlying univalent type theory. By soundness of the calculus, all results proven synthetically in this type theory 
automatically follow to hold in their associated model category $(s\mathbf{S},\mathrm{CB})$ as well.

The interpretation of HoTT in $(s\mathbf{S},\mathrm{CB})$ can be constructed from a combination of existing results in 
the literature. Indeed, we can take the results of Shulman in \cite{shulman1} as a given to reduce the construction of a 
semantics for HoTT to the verification of a few model categorical properties. 
More precisely, the model structure $\mathrm{CB}$ is a cofibrantly generated model structure on the presheaf category
$s\mathbf{S}$ whose cofibrations are exactly the monomorphisms. This means that the simplicial model category
$(s\mathbf{S},\mathrm{CB})$ defines a \emph{Cisinski model category}. Therefore, by \cite[Theorem 5.1]{shulman1}, 
in order for $(s\mathbf{S},\mathrm{CB})$ to support a univalent type theoretical interpretation, we only have to 
show that $(s\mathbf{S},\mathrm{CB})$ is right proper and that it comes equipped with an infinite sequence of univalent 
universal fibrations. 

On the one hand, both properties follow from abstract results of Cisinski (Observation 
1). On the other hand, one obtains the universal fibrations by more concrete means via a result of Rezk, 
Schwede and Shipley (Observation 2). The fact that $(s\mathbf{S},\mathrm{CB})$ is right proper can be proven directly by 
results from the previous section (Observation 3).

\paragraph*{Observation 1}\label{hottpara1}
In \cite[1]{cisinski}, Cisinski introduces the \emph{locally constant model structure}
$([\mathcal{A}^{op},\mathbf{S}],\mathrm{lc})$ on simplicial presheaves over any elegant Reedy category $\mathcal{A}$. It 
is a left Bousfield localization of the Reedy model structure, its fibrant objects are exactly the homotopically 
constant Reedy fibrant objects $X\in[\mathcal{A}^{op},\mathbf{S}]$. These are the Reedy fibrant objects $X$ such that 
the $f$-action $X(f)\colon X(b)\rightarrow X(a)$ is a weak homotopy equivalence for all maps $f\colon a\rightarrow b$ in
$\mathcal{A}$. Hence, Lemma~\ref{lemmacbs=lc} shows that
$(s\mathbf{S},\mathrm{lc})=(s\mathbf{S},\mathrm{CB})$. In \cite{cisinskiprefaisc}, he shows
that $([\mathcal{A}^{op},\mathbf{S}],\mathrm{lc})$ is always right proper, drawing back on general observations about 
fundamental localizers. In \cite[Proposition 1.1]{cisinski} he shows that this model category contains a fibrant 
univalent universe classifying $\kappa$-small maps for every inaccessible cardinal $\kappa$ large enough.

\paragraph*{Observation 2}
More concretely, we obtain a sequence of univalent universes for $(s\mathbf{S},\mathrm{CB})$ from
\cite[Theorem 3.2]{shulman1} and \cite[Corollary 2.5.9]{thesis} if we can simply show that there is a set of generating 
acyclic cofibrations for $(s\mathbf{S},\mathrm{CB})$ with representable codomain. Since we obtained the model structure
$(s\mathbf{S},\mathrm{CB})$ by left Bousfield localization, a priori it is very hard to present a well behaved set of 
generating acyclic cofibrations. But the authors of \cite{rss} show that the fibrations in the canonical model structure
$(s\mathbf{S},\mathrm{CB})$ are exactly the \emph{equi-fibred Reedy fibrations}. For such, a set of generating acyclic 
cofibrations is given in \cite[Proposition 8.5]{rss} by $\mathcal{J}_{CB}=\mathcal{J}_h\cup\mathcal{J}^{\prime\prime}$ 
for
\[\mathcal{J}_h=\{h_i^n\Box\sprime\delta_m\colon(\Delta^n\Box\partial\Delta^m)\cup_{\Lambda_i^n\Box\partial\Delta^m}(\Lambda_i^n\Box\Delta^m)\rightarrow(\Delta^n\Box\Delta^m)\mid 0\leq i\leq m,n\},\]
\[\mathcal{J}^{\prime\prime}:=\{\delta_n\Box\sprime d^m_i\colon(\Delta^n\Box\Delta^{m-1})\cup_{\partial\Delta^n\Box\Delta^{m-1}}(\partial\Delta^n\Box\Delta^m)\rightarrow(\Delta^n\Box\Delta^m)\mid n\geq 0,m\geq i\geq 0\}.\]
The box products $\Delta^n\Box\Delta^m$ are exactly the representables in $s\mathbf{S}$, thus a set of 
generating acyclic cofibrations with representable codomain does indeed exist.

\paragraph*{Observation 3}
We can give an elementary proof of right properness of $(s\mathbf{S},\mathrm{CB})$ using the symmetry of the 
model structure observed in the last section. More precisely, we can use that complete Bousfield-Segal spaces are 
exactly the objects which are simultaneously $v$-fibrant and $h$-fibrant as noted in Remark~\ref{rembshfib}.

Recall that a model category $\mathbb{M}$ is right proper if and only if the pullback of any acyclic cofibration with 
fibrant codomain along fibrations is a weak equivalence. This is shown in \cite[Lemma 9.4]{bousfieldtelescopic} for 
example. Furthermore, recall the generating sets $\mathcal{J}_v$ and $\mathcal{J}_h$ for the acyclic cofibrations in
$(s\mathbf{S},R_v)$ and in $(s\mathbf{S},R_h)$, respectively, from Section~\ref{secreedy}. In the following, we denote 
the class of weak equivalences in $(s\mathbf{S},\mathrm{CB})$ by $\mathcal{W}_{CB}$, and the class of its 
fibrations by $\mathcal{F}_{CB}$. Given a class $S$ of arrows in $s\mathbf{S}$, we denote the class of arrows with the 
left lifting property (right lifting property) against all arrows in $S$ by $\tensor[^\pitchfork]{S}{}$ (by $S^{\pitchfork}$).

\begin{lemma}\label{rpreduction2}
The class of acyclic cofibrations with fibrant codomain in $(s\mathbf{S},\mathrm{CB})$ is exactly the class of 
maps in the saturation of $\mathcal{J}_v\cup\mathcal{J}_h$ with fibrant codomain, i.e.\
\[(\mathcal{W}_{CB}\cap\mathcal{C})/\text{C.\ B.-S.\ spaces}=\tensor[^\pitchfork]{((\mathcal{J}_v\cup\mathcal{J}_h)}{^\pitchfork})/\text{C.\ B.-S.\ spaces}.\]
\end{lemma}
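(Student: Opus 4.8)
The plan is to prove the two inclusions separately: the forward one is a formal consequence of $(s\mathbf{S},\mathrm{CB})$ being a left Bousfield localization of both Reedy structures, while the reverse one uses a factorization-and-retract argument that crucially exploits the double fibrancy characterization of complete Bousfield-Segal spaces from Remark~\ref{rembshfib}. First I would record the elementary identity
\[(\mathcal{J}_v\cup\mathcal{J}_h)^{\pitchfork}=\mathcal{J}_v^{\pitchfork}\cap\mathcal{J}_h^{\pitchfork}=\mathcal{F}_v\cap\mathcal{F}_h,\]
so that the class $K:=\tensor[^\pitchfork]{((\mathcal{J}_v\cup\mathcal{J}_h)}{^\pitchfork})$ appearing on the right is exactly the saturation of the set $\mathcal{J}_v\cup\mathcal{J}_h$.

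For the inclusion $K\subseteq\mathcal{W}_{CB}\cap\mathcal{C}$ (which already yields ``$\supseteq$'' in the statement, even without the fibrancy restriction), I would check that each generator is a CB-acyclic cofibration. Every map in $\mathcal{J}_v$ is a $v$-acyclic cofibration, hence a monomorphism and a $v$-weak equivalence; since $(s\mathbf{S},\mathrm{CB})$ is a left Bousfield localization of $(s\mathbf{S},R_v)$ we have $\mathcal{C}_v=\mathcal{C}$ and $\mathcal{W}_v\subseteq\mathcal{W}_{CB}$, so each such map lies in $\mathcal{W}_{CB}\cap\mathcal{C}$. The same reasoning applies to $\mathcal{J}_h$, using that $(s\mathbf{S},\mathrm{CB})$ is also a localization of $(s\mathbf{S},R_h)$, as established in the proof of Theorem~\ref{cnstequiv2}. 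As the class of acyclic cofibrations of a model category, $\mathcal{W}_{CB}\cap\mathcal{C}$ is saturated, so it contains the saturation $K$ of $\mathcal{J}_v\cup\mathcal{J}_h$.

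For the reverse inclusion, let $f\colon A\rightarrow B$ be a CB-acyclic cofibration with $B$ a complete Bousfield-Segal space. Applying the small object argument to $\mathcal{J}_v\cup\mathcal{J}_h$ produces a factorization $f=p\circ i$ with $i\colon A\rightarrow Z$ in $K$ and $p\colon Z\rightarrow B$ in $\mathcal{F}_v\cap\mathcal{F}_h$. The key observation is that $Z$ is automatically a complete Bousfield-Segal space: since $p$ is a $v$-fibration over the $v$-fibrant $B$, the object $Z$ is $v$-fibrant, and since $p$ is an $h$-fibration over the $h$-fibrant $B$, it is $h$-fibrant; by Remark~\ref{rembshfib} being simultaneously $v$- and $h$-fibrant is precisely the condition of being a complete Bousfield-Segal space. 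This is the crux of the argument and the very reason the union $\mathcal{J}_v\cup\mathcal{J}_h$, rather than either set alone, appears in the statement.

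Finally I would promote $p$ to a CB-acyclic fibration and conclude by a retract argument. By the first inclusion $i\in K\subseteq\mathcal{W}_{CB}$, and $f\in\mathcal{W}_{CB}$ by hypothesis, so $p\in\mathcal{W}_{CB}$ by 3-for-2. As $p$ is a $v$-fibration between the CB-fibrant objects $Z$ and $B$, the standard fact that a map of fibrant objects in a left Bousfield localization is a fibration there exactly when it is one in the underlying model structure shows $p\in\mathcal{F}_{CB}$; hence $p$ is a CB-acyclic fibration and lifts against every monomorphism. Lifting $f$ against $p$ in the square with bottom edge $\mathrm{id}_B$ yields a retraction exhibiting $f$ as a retract of $i$, whence $f\in K$ by closure of $K$ under retracts. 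The main obstacle is exactly the identification of $Z$ as CB-fibrant together with the localization principle promoting the $v$-fibration $p$ to a CB-fibration; everything else is formal.
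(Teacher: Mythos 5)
Your proposal is correct and follows essentially the same route as the paper's own proof: both factor the given acyclic cofibration through the weak factorization system generated by $\mathcal{J}_v\cup\mathcal{J}_h$, use the characterization of complete Bousfield-Segal spaces as the simultaneously $v$- and $h$-fibrant objects to see that the intermediate object is CB-fibrant, promote the second factor to a CB-fibration via the standard localization principle for maps between local objects, and conclude by a retract argument. The only (immaterial) difference is that you lift the cofibration against the map promoted to an acyclic CB-fibration, whereas the paper lifts the acyclic CB-cofibration against the CB-fibration directly.
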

\begin{proof}
As $(s\mathbf{S},\mathrm{CB})$ is a left Bousfield localization of both $(s\mathbf{S},R_v)$ and 
$(s\mathbf{S},R_h)$, we have
\[\mathcal{J}_v\cup\mathcal{J}_h\subseteq\mathcal{W}_{CB}\cap\mathcal{C},\]
so one direction is clear. Vice versa, let $j\colon X\hookrightarrow Y$ be a weak equivalence in
$(s\mathbf{S},\mathrm{CB})$ with $Y$ a complete Bousfield-Segal space. Note that
$(\mathcal{J}_v\cup\mathcal{J}_h)^{\pitchfork}$ is the intersection of the set $\mathcal{F}_v$ of $v$-fibrations and the 
set $\mathcal{F}_h$ of $h$-fibrations, and hence the pair\linebreak
$(\tensor[^\pitchfork]{((\mathcal{J}_v\cup\mathcal{J}_h)}{^\pitchfork}),\mathcal{F}_v\cap\mathcal{F}_h)$ is a 
weak factorization system on $s\mathbf{S}$ by general abstract non-sense. Pick a factorization
$X\xrightarrow{k}Z\xrightarrow{q}Y$ of $j$ with
$k\in\tensor[^\pitchfork]{((\mathcal{J}_v\cup\mathcal{J}_h)}{^\pitchfork})$ and
$q\in\mathcal{F}_v\cap\mathcal{F}_h$,
\begin{equation}\label{diagrpreduction2}
\begin{gathered}
\xymatrix{
X\ar[r]^k\ar@{^(->}[d]^j & Z\ar[d]^{q} \\
Y\ar@{=}[r] & Y. \\
}
\end{gathered}
\end{equation}
Since $Y$ is a complete Bousfield-Segal space, $Z$ is now both $v$-fibrant and $h$-fibrant, hence a complete
Bousfield-Segal space, too. But a map between complete Bousfield-Segal spaces is a fibration in
$(s\mathbf{S},\mathrm{CB})$ if and only if it is a $v$-fibration. This in turn holds if and only if it is an
$h$-fibration as can be seen by \cite[Proposition 7.21]{jtqcatvsss}. Hence, we obtain a lift for the square
$(\ref{diagrpreduction2})$ which exhibits $j$ as retract of $k$. Therefore,
$j\in\tensor[^\pitchfork]{((\mathcal{J}_v\cup\mathcal{J}_h)}{^\pitchfork})$.
\end{proof}

\begin{lemma}\label{rpreduction3}
Let $S$ and $\mathcal{F}$ be classes of morphisms in $s\mathbf{S}$ and assume $\mathcal{F}$ is closed under pullbacks, 
retracts and sequential limits. Suppose for every map $f\colon X\rightarrow Y$ in $S$ and every 
map $p\colon Z\rightarrow Y$ in $\mathcal{F}$ the pullback $p^{\ast}f\colon p^{\ast}X\rightarrow Z$ is in the 
saturation $\tensor[^\pitchfork]{((S)}{^\pitchfork})$ of $S$. Then the pullback of every map in the saturation
$\tensor[^\pitchfork]{((S)}{^\pitchfork})$ along a map in $\mathcal{F}$ is again contained in
$\tensor[^\pitchfork]{((S)}{^\pitchfork})$ as well.
\end{lemma}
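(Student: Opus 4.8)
The plan is to isolate the saturation and reduce the statement to a closure argument. Throughout I abbreviate the saturation $\tensor[^\pitchfork]{((S)}{^\pitchfork})$ by $\overline{S}$, and I introduce the class
\[
T:=\{g\colon A\to B\mid p^{\ast}g\in\overline{S}\text{ for every }p\colon Z\to B\text{ in }\mathcal{F}\}.
\]
With this notation the hypothesis of the lemma is exactly the assertion $S\subseteq T$, while the conclusion is exactly $\overline{S}\subseteq T$. Since $\overline{S}$ is the smallest saturated class containing $S$ — that is, the smallest class containing $S$ and closed under pushouts, transfinite compositions, coproducts and retracts, which is the standard presentation of $\tensor[^\pitchfork]{((S)}{^\pitchfork})$ via the small object argument in the locally presentable category $s\mathbf{S}$ — it suffices to show that $T$ is itself saturated. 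I would verify the four closure operations in turn; pushouts, transfinite compositions and retracts are representative, and coproducts follow formally as transfinite compositions of pushouts.

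The engine of every verification is that $s\mathbf{S}$ is a presheaf topos, hence locally cartesian closed, so that each pullback functor $p^{\ast}\colon s\mathbf{S}/B\to s\mathbf{S}/Z$ along a map $p\colon Z\to B$ admits a right adjoint and therefore \emph{preserves all colimits}; this, together with the hypothesis that $\mathcal{F}$ is closed under pullback, lets me transport any colimit-type presentation of a map through a pullback. For closure under pushouts, suppose $g'$ is the cobase change of some $g\in T$ along a map $B\to B'$, and let $p'\colon Z'\to B'$ in $\mathcal{F}$ be given. I would pull $p'$ back along $B\to B'$ to obtain $p\colon Z\to B$ in $\mathcal{F}$, whence $p^{\ast}g\in\overline{S}$; regarding the defining pushout as a pushout square in $s\mathbf{S}/B'$ and applying the colimit-preserving functor $(p')^{\ast}$ exhibits $(p')^{\ast}g'$ as a cobase change of $p^{\ast}g$, so it lies in the saturated class $\overline{S}$. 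The transfinite-composition case is entirely parallel: a probe $p\colon Z\to B$ over a colimit $B$ restricts along each stage to a map in $\mathcal{F}$, and because $p^{\ast}$ preserves the defining colimit, $p^{\ast}g$ is presented as the transfinite composition of the pullbacks of the successive stages, each of which lies in $\overline{S}$ since those stages lie in $T$.

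The one step that requires genuine care is closure under retracts, because there the relevant base changes $B'\xrightarrow{j}B\xrightarrow{j'}B'$ (with $j'j=\mathrm{id}$) no longer sit over a single object, so I cannot simply apply a single colimit-preserving pullback functor. Given $g'$ a retract of $g\in T$ through such base maps and a probe $p'\colon Z'\to B'$ in $\mathcal{F}$, I would form $p\colon Z\to B$ as the pullback of $p'$ along $j'$, which again lands in $\mathcal{F}$, so that $p^{\ast}g\in\overline{S}$. A section $\sigma\colon Z'\to Z$ and a retraction $\rho\colon Z\to Z'$ over $j$ and $j'$ can be produced directly from the universal property of $Z=B\times_{B'}Z'$ using $j'j=\mathrm{id}$; combining these base-level data with the retract structure on domains and feeding them into the universal properties of the two total pullbacks then exhibits $(p')^{\ast}g'$ as a retract of $p^{\ast}g$, which once more lies in $\overline{S}$. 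This completes the proof that $T$ is saturated and hence that $\overline{S}\subseteq T$. I expect the only real obstacle to be the bookkeeping of these pullback identifications — in particular, confirming that the pulled-back pushout is again a pushout and that the retract triangles commute strictly — since the conceptual content is already exhausted by local cartesian closedness of $s\mathbf{S}$ together with the pullback-stability of $\mathcal{F}$.
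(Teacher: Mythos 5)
Your proposal is correct and follows essentially the same route as the paper, whose proof is a one-line appeal to the fact that pullbacks in the topos $s\mathbf{S}$ commute with pushouts, transfinite compositions and retracts, followed by ``a straightforward induction'' over the cellular presentation of $\tensor[^\pitchfork]{((S)}{^\pitchfork})$; your class $T$ and the three closure verifications are exactly that induction written out, with the retract case (correctly) identified as the only step needing the section $\sigma=(jp',\mathrm{id})$ rather than a single colimit-preserving pullback functor. The only cosmetic slip is calling $g'$ a cobase change of $g$ ``along $B\to B'$'' rather than along the map $A\to A'$ of domains, and, as you implicitly note, the identification of $\tensor[^\pitchfork]{((S)}{^\pitchfork})$ with the retract-closure of the cellular maps uses that $S$ is a set, which holds in the paper's application.
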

\begin{proof}
In the language of \cite[3]{shulman1}, this holds in virtue of the ``exactness'' properties of Grothendieck 
toposes, i.e.\ pullbacks in $s\mathbf{S}$ commute with pushouts, transfinite compositions and retracts in such 
a way that the proof becomes a straight forward induction.
\end{proof}
\begin{theorem}\label{rightproper}
The model category $(s\mathbf{S},\mathrm{CB})$ is right proper.
\end{theorem}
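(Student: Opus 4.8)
The plan is to apply the pullback criterion for right properness recalled just before Lemma~\ref{rpreduction2} (cf.\ \cite[Lemma 9.4]{bousfieldtelescopic}): it suffices to show that for every acyclic cofibration $j\colon A\hookrightarrow B$ in $(s\mathbf{S},\mathrm{CB})$ with $B$ a complete Bousfield-Segal space and every fibration $p\colon E\twoheadrightarrow B$, the pullback $p^{\ast}j\colon A\times_B E\rightarrow E$ is a weak equivalence in $(s\mathbf{S},\mathrm{CB})$. Since $B$ is fibrant and $p$ is a fibration, $E$ is a complete Bousfield-Segal space as well, so $p$ is a fibration between fibrant objects. By the characterization recorded in Remark~\ref{rembshfib} (via \cite[Proposition 7.21]{jtqcatvsss}), such a $p$ is simultaneously a $v$-fibration and an $h$-fibration, i.e.\ $p\in\mathcal{F}_v\cap\mathcal{F}_h=(\mathcal{J}_v\cup\mathcal{J}_h)^{\pitchfork}$.

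I would then combine the two preceding lemmas. By Lemma~\ref{rpreduction2}, the map $j$ lies in the saturation $\tensor[^\pitchfork]{((\mathcal{J}_v\cup\mathcal{J}_h)}{^\pitchfork})$. Applying Lemma~\ref{rpreduction3} with $S:=\mathcal{J}_v\cup\mathcal{J}_h$ and $\mathcal{F}:=\mathcal{F}_v\cap\mathcal{F}_h$---which, being of the form $(\mathcal{J}_v\cup\mathcal{J}_h)^{\pitchfork}$, is automatically closed under pullbacks, retracts and sequential limits---then yields that $p^{\ast}j$ again lies in $\tensor[^\pitchfork]{((\mathcal{J}_v\cup\mathcal{J}_h)}{^\pitchfork})$. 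As already noted in the proof of Lemma~\ref{rpreduction2}, $\mathcal{J}_v\cup\mathcal{J}_h\subseteq\mathcal{W}_{CB}\cap\mathcal{C}$, and this latter class is saturated, so it contains the saturation of $\mathcal{J}_v\cup\mathcal{J}_h$. Hence $p^{\ast}j\in\mathcal{W}_{CB}$, which is exactly what the criterion requires.

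The only thing left to check is the hypothesis of Lemma~\ref{rpreduction3}, and this is the step I expect to be the crux, since it is where the horizontal and vertical symmetry of $(s\mathbf{S},\mathrm{CB})$ does the real work. Given $f\in\mathcal{J}_v\cup\mathcal{J}_h$ and $p\in\mathcal{F}_v\cap\mathcal{F}_h$, I would argue by cases. If $f\in\mathcal{J}_v$, then $f$ is a $v$-acyclic cofibration and $p$ is in particular a $v$-fibration, so right properness of $(s\mathbf{S},R_v)$ together with the pullback-stability of monomorphisms shows that $p^{\ast}f$ is again a $v$-acyclic cofibration, i.e.\ a member of $\tensor[^\pitchfork]{(\mathcal{J}_v}{^\pitchfork})\subseteq\tensor[^\pitchfork]{((\mathcal{J}_v\cup\mathcal{J}_h)}{^\pitchfork})$. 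If $f\in\mathcal{J}_h$, the dual argument applies, using that $R_h=\sigma^{\ast}(R_v)$ is right proper by the symmetry and that $p$ is an $h$-fibration, so that $p^{\ast}f$ is an $h$-acyclic cofibration lying in $\tensor[^\pitchfork]{(\mathcal{J}_h}{^\pitchfork})$ and hence in the required saturation.

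The essential point to stress is that neither Reedy structure on its own would suffice: a generic $\mathrm{CB}$-fibration is in general neither a $v$-fibration nor an $h$-fibration, so one cannot simply import right properness from a single $R_v$ or $R_h$. It is precisely because the pullback criterion restricts attention to fibrations with fibrant---equivalently, complete Bousfield-Segal---codomain that both Reedy fibrancy conditions hold at once, and it is the decomposition of the generating acyclic cofibrations as $\mathcal{J}_v\cup\mathcal{J}_h$ in Lemma~\ref{rpreduction2} that lets the two halves of the symmetry be used in tandem.
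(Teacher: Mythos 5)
Your argument is correct and is essentially the paper's own proof: reduce via the Bousfield criterion and Lemma~\ref{rpreduction2} to pullbacks of the generating maps in $\mathcal{J}_v\cup\mathcal{J}_h$ along maps in $\mathcal{F}_v\cap\mathcal{F}_h$, settle those by right properness of the two Reedy structures (plus pullback-stability of monomorphisms), and propagate through the saturation with Lemma~\ref{rpreduction3}; your case split on $f\in\mathcal{J}_v$ versus $f\in\mathcal{J}_h$ is exactly the step the paper performs when it concludes $p^{\ast}j\in\mathcal{W}_v\cup\mathcal{W}_h$. One claim in your closing paragraph is false, however: every $\mathrm{CB}$-fibration \emph{is} both a $v$-fibration and an $h$-fibration, since $\mathrm{CB}$ is a left Bousfield localization of both Reedy structures and localization only shrinks the class of fibrations --- indeed the paper invokes precisely the inclusion $\mathcal{F}_{CB}\subseteq\mathcal{F}_v\cap\mathcal{F}_h$, with no need for the fibrant-codomain detour you take. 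The genuine obstruction to importing right properness from a single Reedy structure lies on the other side: a $\mathrm{CB}$-acyclic cofibration need not be a Reedy acyclic cofibration, and it is this defect that the restriction to fibrant codomains and Lemma~\ref{rpreduction2} repair. This does not affect the validity of your proof, only the moral you draw from it.
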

\begin{proof}
The class $\mathcal{F}_{\mathrm{CB}}$ is closed under pullbacks, retracts and sequential limits. Hence, by
Lemma~\ref{rpreduction2} and Lemma~\ref{rpreduction3} it remains to check that a pullback square of the form
\[
\xymatrix
{
P\ar[r]\ar[d]_{p^{\ast}j}\ar@{}[dr]|(.3){\pbs} & Y\ar[d]^{j} \\
X\ar@{->>}[r]_(.4)p & \Delta^n\Box\Delta^m \\
}
\]
with a fibration $p$ in $(s\mathbf{S},\mathrm{CB})$ and $j\in \mathcal{J}_v\cup \mathcal{J}_h$ exhibits the arrow 
$p^{\ast}j$ to be a weak equivalence in $(s\mathbf{S},\mathrm{CB})$. But $\mathcal{F}_{CB}$ is a subset of
$\mathcal{F}_v\cap\mathcal{F}_h$, so $p$ is both a $v$-fibration and an $h$-fibration. Both Reedy structures
$(s\mathbf{S},R_v)$ and $(s\mathbf{S},R_h)$ are right proper due to the right properness of
$(\mathbf{S},\mathrm{Kan})$. Therefore, $p^{\ast}j\in\mathcal{W}_v\cup\mathcal{W}_h$. But both $\mathcal{W}_v$ 
and $\mathcal{W}_h$ are contained in $\mathcal{W}_{CB}$, since the model structure $CB$ is a left Bousfield 
localization of both. This finishes the proof.
\end{proof}

In \cite[Lemma 7.3.9]{thesis} it is shown that the left Bousfield localization of a right proper model category is
right proper again if and only if the localization is semi-left exact (\cite[Definition 7.3.1]{thesis}). That means, if 
the left derived of the localization preserves homotopy pullbacks along maps between local objects. It  therefore
follows from Theorem~\ref{rightproper} that the localization
$(s\mathbf{S},\mathrm{R}_v)\rightarrow(s\mathbf{S},\mathrm{CB})$ is semi-left exact. 

Furthermore, in virtue of the Quillen equivalence to $(\mathbf{S},\mathrm{Kan})$, the model category
$(s\mathbf{S},\mathrm{CB})$ is a model topos in the sense of \cite{rezkhtytps}. One may therefore ask whether the
semi-left exact localization $(s\mathbf{S},\mathrm{R}_v)\rightarrow(s\mathbf{S},\mathrm{CB})$ is in fact left 
exact. Or in other words, whether the model topos $(s\mathbf{S},\mathrm{CB})$ is a subtopos of the presheaf model topos
$(s\mathbf{S},\mathrm{R}_v)$. 
Therefore, recall that a left Bousfield localization is left exact if the left derived of the localization preserves all 
homotopy pullbacks.

\begin{proposition}
The localization $(s\mathbf{S},R_v)\rightarrow(s\mathbf{S},\mathrm{CB})$ is not left exact.
\end{proposition}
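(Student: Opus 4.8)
The plan is to produce a single homotopy pullback square in $(s\mathbf{S},R_v)$ whose image under the left derived localization functor fails to be a homotopy pullback in $(s\mathbf{S},\mathrm{CB})$. The underlying idea is that the localization collapses the interval: it makes $p_1^{\ast}(\Delta^1)$ contractible and thereby identifies its two endpoints, whereas a well-chosen fibre product \emph{over} $p_1^{\ast}(\Delta^1)$ records whether those endpoints are genuinely distinct. A limit-preserving localization could not simultaneously merge the endpoints and respect such a fibre product.

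Concretely, I would start from the two vertex inclusions $\{0\},\{1\}\colon\Delta^0\rightarrow\Delta^1$ in $\mathbf{S}$ and apply $p_1^{\ast}$, obtaining two maps out of the terminal object $\ast=p_1^{\ast}(\Delta^0)$ into $p_1^{\ast}(\Delta^1)$. The first step is to compute the homotopy pullback of
\[
\xymatrix{
\emptyset\ar[r]\ar[d] & \ast\ar[d]^{p_1^{\ast}(\{0\})}\\
\ast\ar[r]_{p_1^{\ast}(\{1\})} & p_1^{\ast}(\Delta^1)
}
\]
in $(s\mathbf{S},R_v)$. Since the weak equivalences of $R_v$ are the columnwise weak homotopy equivalences, homotopy pullbacks in $R_v$ are computed columnwise in $(\mathbf{S},\mathrm{Kan})$. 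The $n$-th column of $p_1^{\ast}(\Delta^1)$ is the \emph{discrete} simplicial set on the set $\mathrm{Hom}([n],[1])$, and the two maps pick out the constant-$0$ and constant-$1$ elements, which lie in distinct components for every $n$. Hence the columnwise homotopy pullback of $\ast\rightarrow p_1^{\ast}(\Delta^1)\leftarrow\ast$ is empty in each column, so the homotopy pullback in $(s\mathbf{S},R_v)$ is the empty bisimplicial set $\emptyset$, and the square above is a homotopy pullback square in $R_v$.

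The second step is to pass to $(s\mathbf{S},\mathrm{CB})$. By Theorem~\ref{cnstequiv2} the functor $p_1^{\ast}$ is the left adjoint of a Quillen equivalence $(\mathbf{S},\mathrm{Kan})\rightarrow(s\mathbf{S},\mathrm{CB})$; as every object is cofibrant it preserves weak equivalences. Because $\Delta^1\rightarrow\Delta^0$ is a weak homotopy equivalence, $p_1^{\ast}(\Delta^1)$ is weakly equivalent to the terminal object in $(s\mathbf{S},\mathrm{CB})$ and both $p_1^{\ast}(\{0\})$ and $p_1^{\ast}(\{1\})$ become equivalences. The cospan thus becomes $\ast\rightarrow\ast\leftarrow\ast$, whose homotopy pullback is terminal. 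On the other hand, the left derived localization sends $\emptyset$ to the initial object, which is not weakly equivalent to the terminal object since $\mathrm{Ho}(s\mathbf{S},\mathrm{CB})$ is equivalent to $\mathrm{Ho}(\mathbf{S},\mathrm{Kan})$ by Theorem~\ref{cnstequiv2}. Therefore the comparison map $\mathbb{L}(\emptyset)\rightarrow\ast$ is not an equivalence, the homotopy pullback is not preserved, and the localization is not left exact.

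The step to get right is the computation of the homotopy pullback in $R_v$: it must genuinely return $\emptyset$ rather than the contractible ``path space'' one would get after realizing. This hinges on \emph{not} realizing prematurely — in $R_v$ the object $p_1^{\ast}(\Delta^1)$ is columnwise discrete, so a fibre product over it sees $\{0\}$ and $\{1\}$ as lying in different path components, while the localization makes $\Delta^1$ contractible and merges them. The only subtlety is confirming that the strict columnwise pullback computes the homotopy pullback here, which holds because the relevant columns are discrete simplicial sets and the homotopy pullback of two points into a discrete space along maps hitting distinct components is empty.
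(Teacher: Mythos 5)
Your proof is correct and is essentially the paper's own argument: the paper applies $p_1^{\ast}$ to a cartesian square in $\mathbf{S}$ whose vertical leg is a weak equivalence not preserved by pullback, and your square $\emptyset=\Delta^0\times_{\Delta^1}\Delta^0$ is exactly the canonical instance of that template. The only cosmetic difference is that you verify the $R_v$ homotopy pullback by a direct columnwise computation over the discrete columns of $p_1^{\ast}(\Delta^1)$, whereas the paper notes that $p_1^{\ast}$ of any map of simplicial sets is a Reedy fibration and invokes right properness of $R_v$; both justifications are valid.
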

\begin{proof}
Since every map between non-empty (discrete simplicial) sets is a Kan fibration, every map $S\rightarrow T$ of 
simplicial sets induces a Reedy fibration $p_1^{\ast}S\rightarrow p_1^{\ast}T$ of bisimplicial sets. Let
\[\xymatrix{
P\ar[d]\ar[r]\ar@{}[dr]|(.2){\pbs} & C\ar[d]^{} \\
A\ar[r] & B
}\]
be a cartesian square in $\mathbf{S}$ such that $C\rightarrow B$ is a weak homotopy equivalence and its pullback
$P\rightarrow A$ is not. Then
\[\xymatrix{
p_1^{\ast}P\ar[d]\ar[r]\ar@{}[dr]|(.2){\pbs} & p_1^{\ast}C\ar[d]^{} \\
p_1^{\ast}A\ar@{->>}[r] & p_1^{\ast}B
}\]
is cartesian in $s\mathbf{S}$, $p_1^{\ast}C\rightarrow p_1^{\ast}B$ is a weak equivalence in
$(s\mathbf{S},\mathrm{CB})$ and $p_1^{\ast}A\rightarrow p_1^{\ast}B$ is a Reedy fibration (although
$A\rightarrow B$ is not a Kan fibration). Then $p_1^{\ast}P\rightarrow p_1^{\ast}A$ cannot be a weak 
equivalence in $(s\mathbf{S},\mathrm{CB})$, because $p_1^{\ast}$ is the left adjoint of a
Quillen equivalence and hence reflects weak equivalences between cofibrant objects. In particular, the square 
cannot be homotopy cartesian in $(s\mathbf{S},\mathrm{CB})$. But it is homotopy cartesian in
$(s\mathbf{S},R_v)$, because $p_1^{\ast}A\rightarrow p_1^{\ast}B$ is a Reedy fibration and the Reedy model structure is 
right proper.
\end{proof}

\bibliographystyle{amsplain}
\bibliography{BSBib}

\newcommand{\noopsort}[1]{}
\providecommand{\bysame}{\leavevmode\hbox to3em{\hrulefill}\thinspace}
\providecommand{\MR}{\relax\ifhmode\unskip\space\fi MR }
\providecommand{\MRhref}[2]{%
  \href{http://www.ams.org/mathscinet-getitem?mr=#1}{#2}
}
\providecommand{\href}[2]{#2}
\begin{thebibliography}{10}

\bibitem{bergner2}
J.E. Bergner, \emph{Adding inverses to diagrams {II}: {I}nvertible homotopy
  theories are spaces}, Homology, Homotopy and Applications \textbf{10} (2008),
  no.~2, 175–193.

\bibitem{brtheta}
J.E. Bergner and C.~Rezk, \emph{Reedy categories and the
  $\theta$-construction}, Mathematische Zeitschrift \textbf{274} (2013),
  no.~1-2, 499–514.

\bibitem{bousfieldnotes}
A.K. Bousfield, \emph{The simplicial homotopy theory of iterated loop spaces},
  Typed notes by Julie Bergner.

\bibitem{bousfieldtelescopic}
\bysame, \emph{On the telescopic homotopy theory of spaces}, Transactions of
  the American Mathematical Society \textbf{353} (2001), no.~6, 2391--2426.

\bibitem{cisinskiprefaisc}
D.C. Cisinski, \emph{Les pr\'{e}faisceaux comme mod\`{e}les des types
  d’homotopie}, Ast\'{e}risque \textbf{308} (2007), 392 pp.

\bibitem{cisinski}
\bysame, \emph{Univalent universes for elegant models of homotopy types},
  \url{http://arxiv.org/abs/1406.0058}, 2014, [Online, accessed 31 May 2014].

\bibitem{duggersmallpres}
D.~Dugger, \emph{Combinatorial model categories have presentations}, Adv. Math.
  \textbf{164} (2001), no.~1, 177--201.

\bibitem{duggersimp}
\bysame, \emph{Replacing model categories with simplicial ones}, Transactions
  of the American Mathematical Society \textbf{353} (2001), no.~12, 5003--5027.

\bibitem{duggerunivhtytheories}
\bysame, \emph{Universal homotopy theories}, Advances in Mathematics
  \textbf{164} (2001), no.~1, 144--176.

\bibitem{hirschhorn03}
P.S. Hirschhorn, \emph{Model categories and their localizations}, Mathematical
  Surveys and Monographs, no.~99, American Mathematical Society, Providence,
  R.I., 2003.

\bibitem{hovey}
M.~Hovey, \emph{Model categories}, Mathematical Surveys and Monographs,
  vol.~63, American Mathematical Society, 1999.

\bibitem{joyalqcatslecture}
A.~Joyal, \emph{The {T}heory of {Q}uasi-{C}ategories and its {A}pplications},
  \url{http://mat.uab.cat/~kock/crm/hocat/advanced-course/Quadern45-2.pdf},
  2008-2009, [Lecture Series at ``Simplicial Methods in Higher Categories''
  (CRM)].

\bibitem{jtqcatvsss}
A.\ Joyal and M.\ Tierney, \emph{Quasi-categories vs {S}egal spaces},
  Categories in {A}lgebra, {G}eometry and {M}athematical {P}hysics, American
  Mathematical Society, 2006, pp.~277--326.

\bibitem{hallgrpthy}
M.~Hall Jr., \emph{The theory of groups}, The Macmillan Company, New York, 1959
  (Fourth Printing 1963).

\bibitem{luriehtt}
J.~Lurie, \emph{Higher topos theory}, Annals of Mathematics Studies, no. 170,
  Princeton University Press, 2009.

\bibitem{moerdijkgroupcompletion}
I.~Moerdijk, \emph{Bisimplicial sets and the group completion theorem},
  {A}lgebraic K-Theory: {C}onnections with {G}eometry and {T}opology,
  Mathematical and Physical Sciences, vol. 279, Kluwer Academic Publishers,
  1989, pp.~225--240.

\bibitem{hott}
The Univalent~Foundations Program, \emph{Homotopy type theory: Univalent
  foundations of mathematics}, \url{http://homotopytypetheory.org/book}, 2013.

\bibitem{rezk}
C.~Rezk, \emph{A model for the homotopy theory of homotopy theories},
  Transactions of the American Mathematical Society (1999), 973--1007.

\bibitem{rezkhtytps}
C.~Rezk, \emph{Toposes and homotopy toposes (version 0.15)},
  \url{https://www.researchgate.net/publication/255654755_Toposes_and_homotopy_toposes_version_015},
  2010.

\bibitem{rss}
C.~Rezk, S.~Schwede, and B.~Shipley, \emph{Simplicial structures on model
  categories and functors}, American Journal of Mathematics \textbf{123}
  (2001), 551--575.

\bibitem{shulman1}
M.~Shulman, \emph{The {U}nivalence axiom for elegant {R}eedy presheaves},
  Homology, {H}omotopy and {A}pplications \textbf{17} (2013), no.~2.

\bibitem{shulmaninv}
\bysame, \emph{{U}nivalence for inverse diagrams and homotopy canonicity},
  Mathematical Structures in Computer Science, From type theory and homotopy
  theory to Univalent Foundations of Mathematics, vol. 25, Special Issue 5,
  Cambridge University Press, 2015, pp.~1203--1277.

\bibitem{thesis}
R.~Stenzel, \emph{On univalence, {R}ezk completeness and presentable
  quasi-categories}, Ph.D. thesis, University of Leeds, Leeds LS2 9JT, 3 2019.

\end{thebibliography}
\end{document}